\documentclass[11pt]{amsart}

\usepackage{amsmath,amscd,amsfonts,amssymb,overpic}
\usepackage{graphicx}
\usepackage[all]{xy}
\usepackage{color}

\usepackage{times}


\RequirePackage{rotating}                   
    \def\HMt{%
       \setbox0=\hbox{$\widehat{\mathit{HM}}$}
       \setbox1=\hbox{$\mathit{HM}$}
       \dimen0=1.1\ht0
       \advance\dimen0 by 1.17\ht1
       \smash{\mskip2mu\raise\dimen0\rlap{%
          \begin{turn}{180}
              {$\widehat{\phantom{\mathit{HM}}}$}
           \end{turn}} \mskip-2mu    
                \mathit{HM}
   }{\vphantom{\widehat{\mathit{HM}}}}{}}

\newtheorem{thm}[equation]{Theorem}
\newtheorem{defn}[equation]{Definition}
\newtheorem{cor}[equation]{Corollary}
\newtheorem{q}[equation]{Question}

\newtheorem{rmk}[equation]{Remark}
\newtheorem{lemma}[equation]{Lemma}
\newtheorem{claim}[equation]{Claim}

\newtheorem{convention}[equation]{Convention}
\newtheorem{conjecture}[equation]{Conjecture}

\numberwithin{equation}{section}

\newcommand{\R}{\mathbb{R}}
\newcommand{\Z}{\mathbb{Z}}

\newcommand{\F}{\mathbb{F}}

\newcommand{\hh}{\mathfrak{h}}
\renewcommand{\sl}{s \ell}

 
\newcommand{\op}{\operatorname}
\newcommand{\s}{\vskip.1in}
\newcommand{\n}{\noindent}
\newcommand{\bs}{\boldsymbol}
\newcommand{\bdry}{\partial}

\newcommand{\be}{\begin{enumerate}}
\newcommand{\ee}{\end{enumerate}}

\begin{document}

\title[Sutured HF = sutured ECH]{Sutured Heegaard Floer and embedded contact homologies are isomorphic}

\author{Vincent Colin}
\address{Nantes Universit\'e, CNRS, Laboratoire de Math\'ematiques Jean Leray, F-44000 Nantes, France}
\email{vincent.colin@univ-nantes.fr}
\author{Paolo Ghiggini}
\address{Universit\'e Grenoble Alpes, CNRS, IF, 38000 Grenoble, France}
\email{paolo.ghiggini@univ-grenoble-alpes.fr}
\author{Ko Honda}
\address{University of California, Los Angeles, Los Angeles, CA 90095}
\email{honda@math.ucla.edu} \urladdr{http://www.math.ucla.edu/\char126 honda}

\thanks{VC and PG supported by ANR COSY and the Labex Centre Henri Lebesgue, ANR-11-LABX-0020-01. KH supported by NSF Grants DMS-1549147 and DMS-2003483.}

\date{This version: March 23, 2024.}

\begin{abstract}
We prove the equivalence of the sutured versions of Heegaard Floer homology, monopole Floer homology, and embedded contact homology. As applications we show that the knot versions of Heegaard Floer homology and embedded contact homology are equivalent and that product sutured $3$-manifolds are characterized by the fact that they carry an adapted Reeb vector field without periodic orbits.
\end{abstract}

\maketitle

\section{Introduction}\label{section: introduction}

Heegaard Floer homology, monopole Floer homology and embedded contact homology are three drastically different-looking incarnations of the same closed $3$-manifold invariant. Heegaard Floer homology, introduced by Ozsv\'ath and Szab\'o \cite{OSz1,OSz2}, is easily seen to be topological and admits a combinatorial description via nice  Heegaard diagrams \cite{SW}. Embedded contact homology (ECH), defined by Hutchings \cite{Hu1,Hu2,Hu3} and Hutchings-Taubes \cite{HT1,HT2}, encodes the dynamical properties of an auxiliary Reeb vector field.  Both were defined as symplectic counterparts of monopole Floer homology, defined by Kronheimer and Mrowka \cite{KMbook}. The latter was shown to be isomorphic to ECH by Taubes in \cite{T2,T3,T4,T5,T6} and to Heegaard Floer homology by Kutluhan-Lee-Taubes in \cite{KLT1,KLT2,KLT3,KLT4,KLT5}. Heegaard Floer homology and ECH were independently shown to be isomorphic to each other in \cite{CGH0,CGH1,CGH2,CGH3}.

All three homologies admit natural extensions to compact $3$-manifolds with {\em sutured} boundary \cite{Ju1, CGHH, KM}. Sutured manifolds were introduced by Gabai \cite{Ga} in the context of foliation theory and are now understood to be a bridge between contact geometry and its convex surface theory \cite{Gi} on one hand and geometric decompositions of $3$-manifolds/gauge-theoretic invariants on the other hand. In particular, sutured Heegaard Floer homology, developed under the impulsion of Juh\'asz \cite{Ju}, has striking applications to low-dimensional topology.

Baldwin and Sivek proved in \cite{BS} that the sutured versions of monopole Floer homology and Heegaard Floer homology are isomorphic and that the isomorphism identifies the contact invariants. For what concerns the relation between the sutured versions of Heegaard Floer homology and ECH, we have the following conjecture, which is a slight strengthening of Conjecture 1.5 in \cite{CGHH}.

\begin{conjecture}\label{main conjecture}
  If $(M, \Gamma, \xi)$ is a sutured contact $3$-manifold, then
  $$ECH(M, \Gamma, \xi, A) \simeq SFH(-M, -\Gamma, \mathfrak{s}_\xi + \operatorname{PD}(A))$$
  as relatively graded vector spaces over $\Z/2\Z$, where $A \in H_1(M; \Z)$, $\mathfrak{s}_\xi$ is the canonical Spin$^c$-structure determined by $\xi$, $ECH(M, \Gamma, \xi, A)$ is the sutured ECH of $(M, \Gamma, \xi)$ in the homology class $A$, and $SFH(-M, -\Gamma, \mathfrak{s}_\xi + \operatorname{PD}(A))$ is the sutured Heegaard Floer homology of $(-M,\Gamma)$ in the Spin$^c$-structure $\mathfrak{s}_\xi + \operatorname{PD}(A)$. Moreover the isomorphism identifies the contact invariant of $\xi$ in $ECH(M, \Gamma, \xi, 0)$ to that of $SFH(-M, -\Gamma, \mathfrak{s}_\xi)$.
\end{conjecture}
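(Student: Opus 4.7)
The plan is to reduce the sutured isomorphism to the closed case $ECH(Y,\xi_Y,\mathfrak{s}) \simeq \widehat{HF}(-Y,\mathfrak{s})$ proved in \cite{CGH0,CGH1,CGH2,CGH3}, by way of a contact closure of $(M,\Gamma,\xi)$. Concretely, I would attach an auxiliary contact piece $(N,\eta)$ to $(M,\Gamma,\xi)$ along a collar of $\partial M$ determined by $\Gamma$, in the spirit of Honda-Kazez-Mati\'c and Baldwin-Sivek, to produce a closed contact $3$-manifold $(Y,\xi_Y)$ containing $M$ as the region bounded by a separating convex surface of large genus. A supporting contact form $\alpha_Y$ would be chosen so that it restricts on $M$ to an adapted contact form in the sense of \cite{CGHH}, while all Reeb orbits in $N$ either sit on Morse-Bott tori of controlled action or can be pushed to action larger than any prescribed threshold.

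With such a closure in hand, the main task is to show that $SFH(-M,-\Gamma,\mathfrak{s}_\xi+\operatorname{PD}(A))$ and $ECH(M,\Gamma,\xi,A)$ both arise as direct summands of the closed invariants, compatibly under the closed isomorphism. On the Heegaard Floer side this follows from the Baldwin-Sivek/Juh\'asz framework identifying sutured Heegaard Floer homology with an appropriate summand of $\widehat{HF}(-Y)$, indexed by $\operatorname{Spin}^c$-structures extending $\mathfrak{s}_\xi + \operatorname{PD}(A)$. On the ECH side, I would neck-stretch along the separating convex surface and, using SFT compactness together with energy and ECH-index bounds, confine the orbit sets and holomorphic currents contributing to the relevant summand of $ECH(Y,\xi_Y)$ to the subdomain $M$, which then realizes $ECH(M,\Gamma,\xi,A)$ as a direct summand. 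The $H_1$-refinement on the ECH side is tracked by orbit-set homology classes relative to $N$, while on the HF side it is the refinement to individual $\operatorname{Spin}^c$-structures.

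Granting the two splittings, the closed isomorphism restricts to the desired sutured isomorphism once the labels are matched: $\mathfrak{s}_{\xi_Y}|_M = \mathfrak{s}_\xi$ is standard, and the $H_1$-grading on ECH maps to the $\operatorname{Spin}^c$-grading on HF by the usual Taubes-type formula through the CGH identification. The contact invariant statement is then automatic, since the contact class of $\xi_Y$ lies in the $A=0$ summand and its image under the closed CGH isomorphism restricts to the contact invariant of $\xi$ in $SFH(-M,-\Gamma,\mathfrak{s}_\xi)$. I expect the main obstacle to lie in the ECH splitting: showing that all holomorphic currents counted by the differential of the relevant summand are confined to $M$ requires a delicate neck-stretching analysis at the contact-type boundary, and one must further verify that the resulting chain-level summand is independent of the choice of closure, of the auxiliary piece $(N,\eta)$, and of the almost complex structure, and that the two graded decompositions match on the nose through the closed CGH map rather than merely up to some reindexing.
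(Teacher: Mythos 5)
Your overall skeleton --- form a contact closure $(Y,\xi_Y)$, identify the sutured invariants as summands of the closed invariants (Lekili's theorem on the Heegaard Floer side, a confinement argument on the ECH side), and pull back the closed CGH isomorphism --- is the same as the paper's approach. But the paper \emph{does not prove the full conjecture}, and your sketch slides past exactly the two places where the paper explicitly falls short.

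First, the $H_1(M)$-refinement. When $(M,\Gamma)$ is closed along a gluing $\psi\colon R_+(\Gamma)\to R_-(\Gamma)$, the inclusion $M\hookrightarrow Y$ kills the subgroup $K_\psi=\operatorname{Im}(i_{-*}\circ\psi_*-i_{+*})\subset H_1(M;\Z)$: two orbit sets in $M$ whose homology classes differ by an element of $K_\psi$ become homologous in $Y$. Consequently the ECH summand of $\widehat{ECH}(Y,\xi_Y)$ and the Spin$^c$-summand of $\widehat{HF}(-Y)$ that you isolate only remember classes in $H_1(M)/K_\psi$, and the closed CGH isomorphism can only be matched at the level of cosets $A+K_\psi$ rather than individual $A$. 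That is precisely the partial splitting obtained in Theorem~\ref{thm: sutured manifoldbis}, and your plan to ``track orbit-set homology classes relative to $N$'' does not escape it: after closure the chain complexes simply do not see the finer grading, and there is no canonical way to refine the comparison back through the gluing. (This collapses only when $R_\pm$ are annuli, which is why the knot case does yield the full Alexander/Spin$^c$ refinement.)

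Second, the contact invariant. You assert it is ``automatic'' from the closed CGH isomorphism because the contact class of $\xi_Y$ lies in the $A=0$ summand. The paper explicitly declines this: the CGH isomorphism between $\widehat{ECH}$ and $\widehat{HF}$ is established as a relatively graded isomorphism, and it is not known (and not shown here) to carry the ECH contact class of $\xi_Y$, i.e.\ the empty orbit set, to the Ozsv\'ath--Szab\'o contact class in a way that restricts correctly to the sutured invariants through Lekili's identification. Nothing in your sketch supplies that.

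A smaller point of method: your proposed ECH confinement is by neck-stretching and SFT compactness. The paper instead builds an explicit Reeb vector field with a ``buffer zone'' near the gluing torus, and rules out crossing curves by positivity of intersections together with explicit ECH index computations, which in turn force the auxiliary surface $S$ in the closure to have genus $g\geq 3$. This is more elementary and avoids an SFT compactness analysis for ECH currents across a non-contact-type interface, which your sketch would have to supply; but the index bookkeeping is the real content and is not visible in a high-level neck-stretching outline.
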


\begin{rmk} 
ECH admits a unique lifting to the integers defined by a coherent orientation of the moduli spaces defining the boundary map, while Heegaard Floer homology admits different liftings called {\em orientation systems}. In order to state the conjecture over the integers, one would need to identify a canonical orientation system for $SFH$, which has not been done yet. 
\end{rmk}

In this paper we prove part of Conjecture \ref{main conjecture}. In particular, we obtain the following result.

\begin{thm}\label{thm: sutured manifold} 
	Let $(M,\Gamma,\xi)$ be a sutured contact manifold. Then 
\begin{equation} \label{eqn: sutured equivalence}
ECH(M,\Gamma,\xi)\simeq SFH(-M,-\Gamma),
\end{equation}
	where $ECH(M,\Gamma,\xi)$ is the sutured ECH of $(M,\Gamma,\xi)$ summed over all homology classes and $SFH(-M,-\Gamma)$ is the sutured Heegaard Floer homology of $(-M,-\Gamma)$ summed over all relative Spin$^c$-structures.
\end{thm}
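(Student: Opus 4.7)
The plan is to extend the CGH equivalence between closed Heegaard Floer homology and ECH to the sutured setting by building $ECH(M,\Gamma,\xi)$ and $SFH(-M,-\Gamma)$ from a common geometric model, namely a partial open book decomposition of $(M,\Gamma)$ compatible with $\xi$, and then matching generators and differentials at the chain level.

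First, by the Honda-Kazez-Mati\'c theorem, $(M,\Gamma,\xi)$ admits a compatible partial open book decomposition $(S,R,\phi)$, where $S$ is the page, $R\subset \partial S$ is the binding part, and $\phi$ is a partially defined monodromy fixing $\partial S\setminus R$ pointwise. This decomposition produces, through the work of Honda-Kazez-Mati\'c and Juh\'asz, a sutured Heegaard diagram for $(-M,-\Gamma)$ whose generators are tuples of intersection points in bijection with fixed points of iterates of $\phi$. On the ECH side, I would choose a contact form $\alpha$ for $\xi$ adapted to the same partial open book, following the recipe of CGH for closed manifolds, so that the Reeb dynamics on the interior of $M$ mirror the return map of $\phi$. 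Periodic Reeb orbits then correspond to periodic points of $\phi$ together with an elliptic orbit near the binding, giving a bijection between generators of the two chain complexes.

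Second, I would match the differentials by adapting the CGH degeneration argument. The ECH differential counts holomorphic curves in the symplectic completion of $(M,\Gamma,\xi)$; stretching the almost complex structure along the binding forces such curves to break into a main piece lying in the mapping torus of $\phi$ and thin pieces near the binding. A branched cover construction then relates the main piece to a holomorphic disk in the symmetric product $\op{Sym}^g(S)$, which is precisely what is counted in sutured Heegaard Floer homology. Summing over all homology classes on the ECH side and all relative Spin$^c$-structures on the SFH side then yields the isomorphism \eqref{eqn: sutured equivalence}.

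The main technical obstacle lies in extending the holomorphic curve analysis of the CGH series to the noncompact symplectic completion of a sutured contact manifold. Compactness, transversality, and gluing must all be reworked so that holomorphic curves are prevented from escaping through the cylindrical ends along $\Gamma$, and the branched cover correspondence between ECH curves and holomorphic disks in $\op{Sym}^g(S)$ must remain bijective in this setting. Particular care is required at the corners of the partial open book, where the binding meets the sutures, since the local geometry there has no analogue in the closed case. The CGH proof in the closed case proceeds through several intermediate homology theories and iterated neck-stretching steps; each such step must be carried out in the sutured completion while keeping track of the extra ends, and this is where the bulk of the analytical work lies.
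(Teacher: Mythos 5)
Your proposal takes a genuinely different route from the paper, and the difference matters: you propose to extend the CGH holomorphic-curve machinery directly to the noncompact symplectic completion of $(M,\Gamma,\xi)$ via partial open books, whereas the paper \emph{deliberately avoids} any new holomorphic-curve analysis in the sutured completion by passing to a \emph{closed} contact $3$-manifold. Concretely, the paper first reduces to connected sutures, then constructs a contact closure $(Y,\xi)$ by gluing a thickened auxiliary surface $S\times[-1,1]$ to $(M,\Gamma)$ and identifying the two new boundary surfaces. The central technical result (Theorem~\ref{thm: ECH}) computes, on this closed manifold, $\bigoplus_{A\cdot[\Sigma]=1}\widehat{ECH}(Y,\xi,A)\simeq ECH(M,\Gamma,\xi)\oplus ECH(M,\Gamma,\xi)[1]$ via a carefully designed Reeb vector field with a ``buffer zone'' that blocks holomorphic curves; this is combined with the already-known closed isomorphism $\widehat{ECH}(Y,\xi)\simeq\widehat{HF}(-Y)$ of CGH and with Lekili's theorem $\widehat{HF}(-Y|\Sigma)\simeq SFH(-M,-\Gamma)\oplus SFH(-M,-\Gamma)[1]$. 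The same closure $Y$ is precisely Kronheimer--Mrowka's, which also makes the monopole statement (Theorem~\ref{thm: sutured monopole}) fall out for free.

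The gap in your proposal is in your own ``main technical obstacle'' paragraph: you correctly identify that compactness, the branched-cover correspondence with disks in $\op{Sym}^g(S)$, the chain of intermediate homology theories, and the local geometry at the corners where the binding meets the suture would all have to be reworked in the sutured completion, but you do not resolve any of these. This is not a small detail to defer --- the entire five-paper CGH program would have to be re-run with new noncompact ends, and at present nobody has done so. Matching generators via a partial open book is also more delicate than your sketch suggests: the CGH closed-case generator correspondence already goes through periodic Floer homology of the monodromy rather than through a direct Reeb-orbit/intersection-point bijection, and the partial monodromy of an HKM partial open book is only defined on a subsurface, so the ``iterate of $\phi$'' picture for SFH generators does not immediately translate into a Reeb-orbit picture. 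The paper's contact-closure strategy buys you exactly what your approach lacks: all holomorphic-curve analysis happens either on a closed manifold (where CGH applies as a black box) or in the sutured manifold $(Y_{B_i},\Gamma_{B_i})$, a ball complement in a closed manifold, where the machinery of \cite{CGHH,CGH0} is already established. The price is the rather intricate construction and index bookkeeping of Section~\ref{sec: proof of the main technical theorem}, and the factor-of-two bookkeeping coming from the mapping cone of the $U$-map, but those are elementary compared to what your route would require.
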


For technical reasons we are unable to say anything about the contact invariants of $\xi$ and only prove a partial splitting of Equation~\eqref{eqn: sutured equivalence} into relative Spin$^c$-structures; see Theorem~\ref{thm: sutured manifoldbis} for the precise statement. However, this partial splitting is sufficient to give a complete splitting into relative Spin$^c$-structures in the knot invariant case; see Corollary \ref{cor: knots} and its stronger version Corollary~\ref{cor: isomorphism for knots}.

We give two applications of Theorem \ref{thm: sutured manifold}. The first is the topological invariance of sutured ECH.

\begin{cor} \label{cor: invariance}
  The vector spaces $ECH(M, \Gamma, \xi)$ are topological invariants of $(M, \Gamma)$ (and of the canonical Spin$^c$-structure of $\xi$ if we also take into account the partial decomposition in terms of relative Spin$^c$-structures).
\end{cor}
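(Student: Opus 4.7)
The plan is to deduce the corollary directly from Theorem \ref{thm: sutured manifold} together with the topological invariance of sutured Heegaard Floer homology established by Juh\'asz.

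First I would handle the unrefined statement. Let $\xi$ and $\xi'$ be two contact structures adapted to the same sutured pair $(M,\Gamma)$. Applying Theorem \ref{thm: sutured manifold} to each, we obtain
$$ECH(M,\Gamma,\xi) \simeq SFH(-M,-\Gamma) \simeq ECH(M,\Gamma,\xi')$$
as relatively $\Z/2\Z$-graded vector spaces. Since $SFH(-M,-\Gamma)$ depends only on the diffeomorphism type of the sutured pair $(M,\Gamma)$, composing the two isomorphisms yields an isomorphism $ECH(M,\Gamma,\xi) \simeq ECH(M,\Gamma,\xi')$ and hence the claimed topological invariance of the total sutured ECH. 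No auxiliary choice enters, so this part is essentially a one-line deduction from the main theorem.

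For the refined statement I would invoke the partial splitting of \eqref{eqn: sutured equivalence} into relative Spin$^c$-structures recorded in Theorem~\ref{thm: sutured manifoldbis}. On the ECH side the pieces are indexed by homology classes $A\in H_1(M;\Z)$, with $\mathfrak{s}_\xi$ serving as basepoint, while on the $SFH$ side they are indexed by the shifted relative Spin$^c$-structures $\mathfrak{s}_\xi+\operatorname{PD}(A)$. Since $\mathfrak{s}_\xi$ is determined by the homotopy class of $\xi$ as a $2$-plane field (and is intrinsic to that plane field), any two adapted contact structures $\xi,\xi'$ with $\mathfrak{s}_\xi=\mathfrak{s}_{\xi'}$ produce the same identification with the Spin$^c$-decomposition of $SFH(-M,-\Gamma)$; for general $\xi,\xi'$ one recovers an isomorphism of decompositions up to the topologically defined shift $\operatorname{PD}(\mathfrak{s}_{\xi'}-\mathfrak{s}_\xi)\in H_1(M;\Z)$.

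I do not expect a substantive obstacle in the corollary itself: all the analytic and geometric content has been absorbed into Theorem~\ref{thm: sutured manifold} and its refinement Theorem~\ref{thm: sutured manifoldbis}. The only bookkeeping to be done is the translation between the $H_1$-indexing on the ECH side and the affine Spin$^c$-indexing on the $SFH$ side, for which the canonical Spin$^c$-structure $\mathfrak{s}_\xi$ plays the role of origin. This is why the partial decomposition is a topological invariant of $(M,\Gamma,\mathfrak{s}_\xi)$ rather than of $(M,\Gamma)$ alone, exactly as stated.
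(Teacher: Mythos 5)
Your deduction is correct and is essentially the argument the paper intends: the corollary is stated as an immediate application of Theorem~\ref{thm: sutured manifold} (for the unrefined statement) and Theorem~\ref{thm: sutured manifoldbis} (for the refined one), with no separate proof given, and composing $ECH(M,\Gamma,\xi)\simeq SFH(-M,-\Gamma)$ with the topological invariance of $SFH$ is exactly the intended reasoning. One small caveat worth keeping in mind for the refined statement: the decomposition in Theorem~\ref{thm: sutured manifoldbis} is only into cosets of $K_\psi$, which depends on the auxiliary gluing $\psi$ used to form the closure, so the precise invariant produced is the isomorphism type of that coset decomposition with $\mathfrak{s}_\xi$ as basepoint, not a full Spin$^c$-splitting.
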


Previously it was only known that $ECH(M, \Gamma, \xi)$ is an invariant of $(M, \Gamma, \xi)$ by Theorem 10.2.2 in \cite{CGH0} and Theorem 1.2 in \cite{KS}.

As another application of Theorem \ref{thm: sutured manifold}, we characterize product sutured $3$-mani\-folds by the fact that they carry compatible Reeb vector fields without periodic orbits (Theorem~\ref{thm:  characterization}). This extends the proof of the Weinstein conjecture \cite{T1} to contact $3$-manifolds with sutured boundary. We also show that if $(M,\Gamma ,\xi)$ is a taut sutured contact $3$-manifold of depth greater than $2k$ with $H_2 (M)=0$ and if an adapted Reeb vector field $R_\lambda$ is nondegenerate and has no elliptic orbit, then it has at least $k+1$ hyperbolic orbits (Theorem~\ref{thm: depth}).

We also prove an isomorphism of sutured ECH with the sutured version of monopole Floer homology, denoted $SHM$:

\begin{thm}\label{thm: sutured monopole}
  Let $(M, \Gamma, \xi)$ be a sutured contact manifold. Then
  $$ECH(M, \Gamma, \xi) \simeq SHM(-M, - \Gamma).$$
\end{thm}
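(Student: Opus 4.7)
The proof reduces to composing two already-established isomorphisms. By Theorem~\ref{thm: sutured manifold}, we have
\[ ECH(M,\Gamma,\xi)\simeq SFH(-M,-\Gamma) \]
as relatively graded $\Z/2\Z$-vector spaces. On the other hand, Baldwin and Sivek in \cite{BS} have shown that sutured Heegaard Floer homology and sutured monopole Floer homology are isomorphic; applied to the sutured manifold $(-M,-\Gamma)$, this gives
\[ SFH(-M,-\Gamma)\simeq SHM(-M,-\Gamma). \]
Concatenating the two isomorphisms immediately yields
\[ ECH(M,\Gamma,\xi)\simeq SHM(-M,-\Gamma), \]
as claimed.

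The only items that need to be verified are compatibility of conventions: the orientation-reversal $M\sa -M$, $\Gamma\sa -\Gamma$ is introduced already in Theorem~\ref{thm: sutured manifold} and is transparent to the Baldwin--Sivek isomorphism by its naturality in the sutured manifold; the relative grading is preserved on each side, so it is preserved in the composition; and although the Baldwin--Sivek statement is written with integer coefficients, the absence (to date) of a canonical orientation system on the Heegaard Floer side forces us to work throughout with $\Z/2\Z$ coefficients, which amounts simply to reducing their statement modulo~$2$ before composing. This matches the setting in which Theorem~\ref{thm: sutured manifold} is formulated, as noted in the Remark following Conjecture~\ref{main conjecture}.

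Since all the substantive work is concentrated in Theorem~\ref{thm: sutured manifold} and in \cite{BS}, there is no serious mathematical obstacle at this stage. The main task is purely bookkeeping: confirming that relative gradings, orientation conventions, and mod-$2$ reductions line up across the two cited isomorphisms. In particular, no new analytical input (beyond what has been developed for Theorem~\ref{thm: sutured manifold}) is required.
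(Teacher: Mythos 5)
Your composition argument is logically valid, but it follows a genuinely different route from the one the paper actually takes, and the difference matters. The paper proves Theorem~\ref{thm: sutured monopole} directly, without ever passing through Heegaard Floer homology. Starting from Kronheimer--Mrowka's definition $SHM(-M,-\Gamma)\simeq \HMt_\bullet(-Y|\Sigma)$, the authors establish a new monopole-theoretic vanishing statement, Lemma~\ref{vanishing of U in HM}: the $U$-map on $\HMt_\bullet(-Y,\mathfrak{s})$ is trivial whenever $\langle c_1(\mathfrak{s}),[\Sigma]\rangle=\chi(\Sigma)$ (the $\HMt$-analogue of Lekili's Corollary~20, proved via a cobordism argument and the Bloom--Mrowka--Ozsv\'ath connected sum formula). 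This yields $\widetilde{\mathit{HM}}_\bullet(-Y|\Sigma)\simeq SHM(-M,-\Gamma)\oplus SHM(-M,-\Gamma)[1]$, where $\widetilde{\mathit{HM}}_\bullet$ is the cone of the $U$-map; Taubes' isomorphism $\HMt_\bullet\simeq ECH$ for closed $3$-manifolds, which intertwines the $U$-maps, identifies this with $\widehat{ECH}(Y,\xi|\Sigma)$, and Theorem~\ref{thm: ECH} then finishes. The payoff of the paper's route is that $ECH\simeq SHM$ is obtained independently of the Heegaard Floer side, running parallel to but logically disjoint from the proof of Theorem~\ref{thm: sutured manifold}; Theorems~\ref{thm: sutured manifold} and~\ref{thm: sutured monopole} together can therefore be read as an independent cross-check of \cite{BS} rather than something that depends on it. Your route is shorter but imports all of Baldwin--Sivek in place of Taubes' isomorphism and Lemma~\ref{vanishing of U in HM}. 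One caution: you are a bit too casual about coefficients. The isomorphism in \cite{BS} is stated in a framework involving local systems/Novikov-type coefficients for $\underline{SHM}$, and extracting the untwisted $\F$-coefficient statement $SFH(-M,-\Gamma)\simeq SHM(-M,-\Gamma)$ that you concatenate with Theorem~\ref{thm: sutured manifold} is not literally ``reducing an integral statement modulo~$2$''; that translation should be addressed explicitly before the two cited isomorphisms can be composed on the nose.
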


The proofs of Theorems~\ref{thm: sutured manifold} and~\ref{thm: sutured monopole} go through the construction of a contact embedding of any sutured contact manifold $(M,\Gamma,\xi)$ into a closed contact manifold $(Y,\xi)$, called the {\em contact closure}, on which we control the Reeb dynamics. We abuse notation by using the same name for both contact structures; this is justified by the fact that they agree where they are both defined, i.e., on $M$.  We identify $ECH(M,\Gamma,\xi)$ as a summand in $\widehat{ECH}(Y,\xi)$ and find the analogous identification on the Heegaard Floer side, given by a result of Lekili \cite{Le}.
The isomorphism between the summands then follows from the isomorphism between $\widehat{ECH} (Y,\xi)$ and $\widehat{HF} (-Y)$, proven in the series \cite{CGH0, CGH1, CGH2}.  On the other hand, the closed $3$-manifold $Y$ is the same closure Kronheimer and Mrowka used to define sutured monopole Floer homology, and therefore Theorem \ref{thm: sutured monopole} follows from the computation of $\widehat{ECH}(Y, \xi)$ and Taubes' isomorphism between monopole Floer homology and ECH proven in the series \cite{T2}--\cite{T6}.

Juh\'asz observed that the hat version of knot Floer homology of a knot in a $3$-manifold can be interpreted as the sutured Floer homology of the knot complement with a pair of meridian sutures. Then the isomorphism between the sutured Floer homologies, in its stronger form taking into account the partial splitting according to relative Spin$^c$-structures proved in Theorem \ref{thm: sutured manifoldbis}, can be translated into an isomorphism between knot Floer homology and ECH of a sutured manifold associated to the knot:

\begin{cor}\label{cor: knots}
Let $K$ be a null-homologous knot in a closed manifold $M$ and $S$ a Seifert surface of $K$. If $M(K)$ is the complement of a tubular neighborhood of $K$, $\Gamma_K$ a pair of oppositely oriented disjoint meridians in $\partial M(K)$, and $(M(K), \Gamma_K, \xi)$ a sutured contact manifold, then, for every $d \in \Z$,
\begin{equation} \label{eqn: knot equivalence}
\widehat{HFK}(-M, -K, [S], d) \simeq \bigoplus_{\langle c_1(\mathfrak{s}_{\xi})+ 2 \op{PD}(A), [S] \rangle =2d} ECH(M(K), \Gamma_K, \xi, A).
\end{equation}
\end{cor}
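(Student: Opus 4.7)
The plan is to combine Juh\'asz's interpretation of $\widehat{HFK}$ as a sutured Heegaard Floer invariant with the partial Spin$^c$-splitting in Theorem~\ref{thm: sutured manifoldbis}.

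First, the Seifert surface $S$ extends to a properly embedded surface $\widehat S \subset M(K)$ meeting each component of $\Gamma_K$ in a single point, and Juh\'asz's theorem gives
\[
\widehat{HFK}(-M, -K, [S], d) \;\cong\; \bigoplus_{\mathfrak{s}} SFH(-M(K), -\Gamma_K, \mathfrak{s}),
\]
where $\mathfrak{s}$ ranges over relative Spin$^c$-structures on $(-M(K), -\Gamma_K)$ satisfying $\langle c_1(\mathfrak{s}), [\widehat S]\rangle = 2d$ (with signs dictated by conventions on the orientation reversal $M \mapsto -M$).

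Next, apply Theorem~\ref{thm: sutured manifoldbis} to $(M(K), \Gamma_K, \xi)$. Its partial splitting relates $ECH(M(K), \Gamma_K, \xi, A)$, summed over $A$ in a given equivalence class in $H_1(M(K); \Z)$, to the corresponding sum of $SFH(-M(K), -\Gamma_K, \mathfrak{s})$ with $\mathfrak{s} = \mathfrak{s}_\xi + \op{PD}(A)$. Using the identity $c_1(\mathfrak{s}_\xi + \op{PD}(A)) = c_1(\mathfrak{s}_\xi) + 2\op{PD}(A)$, the Alexander-grading condition $\langle c_1(\mathfrak{s}), [\widehat S]\rangle = 2d$ translates to $\langle c_1(\mathfrak{s}_\xi) + 2\op{PD}(A), [S]\rangle = 2d$, which is precisely the condition appearing in \eqref{eqn: knot equivalence}.

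The main obstacle is to verify that the partial splitting of Theorem~\ref{thm: sutured manifoldbis} is fine enough to isolate the $A$-indexed summands on the right-hand side of \eqref{eqn: knot equivalence}. In general the partial splitting only resolves $\mathfrak{s}_\xi + \op{PD}(A)$ modulo a subgroup of $H_1$, and this ambiguity could mix some of the summands together. However, in the knot complement $H_1(M(K); \Z) \cong \Z$ is generated by the meridian, the relative class $[\widehat S]$ is its Poincar\'e--Lefschetz dual, and the map $A \mapsto \langle 2\op{PD}(A), [S]\rangle$ is an isomorphism onto $2\Z$. Hence the partial splitting is in fact a complete splitting along the integer grading induced by this pairing, which is precisely the grouping required by the corollary. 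The remaining work consists of a careful check of orientation conventions linking the Alexander grading on $\widehat{HFK}(-M,-K)$ with the Chern-class pairing on the ECH side.
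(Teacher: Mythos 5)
Your overall strategy matches the paper's: identify $\widehat{HFK}(-M,-K)$ with $SFH(-M(K),-\Gamma_K)$ via Juh\'asz, apply Theorem~\ref{thm: sutured manifoldbis}, and translate $\mathfrak{s}_\xi+\operatorname{PD}(A)$ into the Chern class condition $\langle c_1(\mathfrak{s}_\xi)+2\operatorname{PD}(A),[S]\rangle = 2d$. That is exactly how the paper proceeds (it derives Corollary~\ref{cor: knots} from the sharper Corollary~\ref{cor: isomorphism for knots}). However, the step where you justify that the partial splitting of Theorem~\ref{thm: sutured manifoldbis} is fine enough has a genuine gap.

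You assert that $H_1(M(K);\Z)\cong \Z$, generated by the meridian. This is only true when $M$ is a $\Z$-homology sphere. For a null-homologous knot $K$ in a general closed $3$-manifold $M$, one has $H_1(M(K);\Z)\cong \Z\oplus H_1(M;\Z)$, which can have arbitrary extra pieces. The map $A\mapsto \langle 2\operatorname{PD}(A),[S]\rangle$ is then far from injective, and your claim that the partial splitting ``is in fact a complete splitting'' does not follow from the argument you give. The correct and much simpler observation, which the paper uses, is that the subgroup $K_\psi = \operatorname{Im}(i_{-*}\circ\psi_* - i_{+*})$ is \emph{zero} whenever $R_\pm$ are annuli: $H_1(R_\pm)$ is generated by the single boundary meridian, $\psi$ fixes a collar of the boundary, and so $i_{-*}\circ\psi_* - i_{+*}$ kills the generator. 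Once you know $K_\psi=\{0\}$, Theorem~\ref{thm: sutured manifoldbis} gives a full splitting into individual homology classes $A$, and you can regroup into Alexander levels at will. You should replace your $H_1(M(K))\cong\Z$ argument with this computation of $K_\psi$; the rest of your proof is correct and follows the paper's route.
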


Here $c_1(\mathfrak{s}_\xi) \in H_2(M(K), \partial M(K))$ is the relative Chern class of the canonical Spin$^c$-structure $\mathfrak{s}_{\xi}$. Spano in his thesis \cite{Sp} gave evidence for this isomorphism by showing that the graded Euler characteristic of $SFH(M(K), \Gamma_K, \xi)$ is the Alexander polynomial.  

When $K \subset M$ is a fibered knot and $\xi$ is the Thurston-Winkelnkemper contact structure on $M(K)$, then $ECH(M(K), \Gamma_K, \xi)$ is isomorphic to a version of the periodic Floer homology of the monodromy which will be defined in Section \ref{ssec: PFH}. Thus we have the following corollary of corollary:

\begin{cor}\label{cor: fibred knots}
Let $M$ be a closed manifold and $K$ a fibered knot in $M$ of genus $g$ and fiber $S$. If $\hh$ is an area-preserving representative of the monodromy with zero flux, then
\begin{equation} \label{eqn: knot PFH equivalence}
PFH^\sharp(\hh, d) \simeq \widehat{HFK}(-M, -K, d-g).
\end{equation}
\end{cor}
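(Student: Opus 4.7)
The plan is to specialize Corollary~\ref{cor: knots} to the fibered case, where the sutured ECH of the knot complement, as mentioned in the paragraph preceding the statement, translates into periodic Floer homology. Let $\xi_{TW}$ be the Thurston--Winkelnkemper contact structure adapted to $(M(K),\Gamma_K)$ built from $\hh$: its Reeb vector field is positively transverse to every fiber $S$ of the fibration $M(K)\to S^1$, so closed Reeb orbits correspond bijectively to periodic orbits of $\hh$ and finite-energy pseudoholomorphic curves in the symplectization correspond to pseudoholomorphic curves in the mapping torus. Packaging this correspondence in the manner of Section~\ref{ssec: PFH} yields
\begin{equation*}
\bigoplus_{\substack{A\in H_1(M(K);\Z)\\ A\cdot[S]=d}} ECH(M(K),\Gamma_K,\xi_{TW},A)\ \simeq\ PFH^{\sharp}(\hh,d),
\end{equation*}
where the left-hand side groups together all homology classes of Reeb orbit sets with total intersection $d$ with a fiber.

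I would then feed this into Corollary~\ref{cor: knots}. Since $\langle\op{PD}(A),[S]\rangle=A\cdot[S]$, the constraint $\langle c_1(\mathfrak{s}_{\xi_{TW}})+2\op{PD}(A),[S]\rangle=2d_{HFK}$ appearing in \eqref{eqn: knot equivalence} forces every contributing $A$ to have the same intersection number $A\cdot[S]=d_{HFK}-\tfrac12\langle c_1(\mathfrak{s}_{\xi_{TW}}),[S]\rangle$ with the fiber. Consequently the right-hand side of \eqref{eqn: knot equivalence} collapses into a single $PFH^{\sharp}$ group, giving
\begin{equation*}
\widehat{HFK}(-M,-K,d_{HFK})\ \simeq\ PFH^{\sharp}\bigl(\hh,\, d_{HFK}-\tfrac12\langle c_1(\mathfrak{s}_{\xi_{TW}}),[S]\rangle\bigr).
\end{equation*}

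The only remaining task, and what I expect to be the main obstacle, is to compute the grading shift by showing that for the Thurston--Winkelnkemper contact structure on a genus $g$ fibered knot complement, with respect to the trivialization of $\xi_{TW}|_{\partial M(K)}$ implicit in the sutured definition of $c_1(\mathfrak{s}_{\xi_{TW}})$, one has $\langle c_1(\mathfrak{s}_{\xi_{TW}}),[S]\rangle=-2g$. Granting this, the shift becomes $d_{HFK}=d-g$, matching the statement of the corollary. Since $\xi_{TW}$ is transverse to $S$, projection along the Reeb direction identifies $\xi_{TW}|_S$ with $TS$ as oriented plane bundles, so the interior contribution to the Euler number is $\chi(S)=1-2g$; the proof then reduces to a careful boundary bookkeeping that compares the section of $\xi_{TW}|_{\partial S}$ coming from the fibration with the one induced by the sutures $\Gamma_K$, supplying the correction that turns $1-2g$ into $-2g$.
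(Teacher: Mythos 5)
Your proposal is correct and takes essentially the same route as the paper, which disposes of the statement by observing that it is a weaker form of Corollary~\ref{cor: isomorphism for knots} (itself deduced from Theorem~\ref{thm: sutured manifoldbis}), together with Lemma~\ref{ECH and PFH} to trade $ECH(M(K),\Gamma_K,\xi)$ for $PFH^\sharp(\hh)$. The one remaining ingredient you rightly isolate, $\langle c_1(\mathfrak{s}_{\xi_{TW}}),[S]\rangle=-2g$, is also left implicit in the paper; it is the standard normalization making the empty orbit set (degree $0$) land in the bottom Alexander grading $-g$, and your sketch via the isomorphism $\xi_{TW}|_S\cong TS$ induced by projection along the Reeb field, plus a comparison of boundary framings along $\Gamma_K$, is the expected way to pin it down.
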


\begin{rmk} 
It is possible to refine Equations \eqref{eqn: knot equivalence} and \eqref{eqn: knot PFH equivalence} by taking into account the splitting according to relative Spin$^c$-structures; the precise statement will be given in Corollary \ref{cor: isomorphism for knots}.
\end{rmk}

When $d=1$, periodic Floer homology reduces to the usual symplectic Floer homology of a surface automorphism, and therefore Corollary \ref{cor: fibred knots} generalizes previous results of Ni \cite{Ni} and Ghiggini--Spano \cite{GS}. \color{black} The proof here is similar in spirit to that of \cite{Ni}, which goes from the knot to a (different) closed manifold and uses the isomorphism between monopole Floer homology and periodic Floer homology due to Lee-Taubes \cite{LT}, followed by the isomorphism of  \cite{KLT1}--\cite{KLT5}. On the other hand, the proof in \cite{GS} is almost completely independent of the isomorphisms as it uses only the (simpler) open-closed map of \cite{CGH1} and ``standard'' symplectic geometry.

In \cite{KM} Kronheimer and Mrowka defined knot monopole Floer homology groups $HKM(M, K, [S], d)$, where $M$ is a closed manifold, $K \subset M$ a null-homologous knot, $S$ a Seifert surface for $K$, and $d \in \Z$, as the monopole Floer homology of the sutured manifold $(M(K), \Gamma_K)$.
The same argument proving Corollary \ref{cor: knots} also proves the following corollary:

\begin{cor} \label{cor: knot monopole}
Let $K$ be a null-homologous knot in a closed manifold $M$ and $S$ a Seifert surface of $K$. Then, for every $d \in \Z$,
\begin{equation} 
HKM(-M, -K, [S], d) \simeq \bigoplus_{\langle c_1(\mathfrak{s}_{\xi})+ 2 \op{PD}(A), [S] \rangle =2d} ECH(M(K), \Gamma_K, \xi, A).
\end{equation}
\end{cor}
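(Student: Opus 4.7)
The plan is to reduce Corollary~\ref{cor: knot monopole} to a Spin$^c$-refined version of Theorem~\ref{thm: sutured monopole}, following the same template used for Corollary~\ref{cor: knots}. By Kronheimer and Mrowka's original construction in \cite{KM}, the knot monopole Floer group $HKM(-M, -K, [S], d)$ is defined as the sutured monopole Floer homology of $(-M(K), -\Gamma_K)$, refined by grouping relative Spin$^c$-structures according to the value of their pairing with the class represented by the (capped off) Seifert surface $S$. Concretely,
\begin{equation*}
HKM(-M, -K, [S], d) \;\simeq\; \bigoplus_{\langle c_1(\mathfrak{t}), [S] \rangle = 2d} SHM(-M(K), -\Gamma_K, \mathfrak{t}),
\end{equation*}
where the sum is over relative Spin$^c$-structures $\mathfrak{t}$ on $M(K)$.

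Next, I would invoke the refined form of Theorem~\ref{thm: sutured monopole} that respects the partial decomposition into relative Spin$^c$-structures --- the monopole analogue of the partial splitting recorded in Theorem~\ref{thm: sutured manifoldbis}. Under this refinement, each summand $SHM(-M(K), -\Gamma_K, \mathfrak{s}_\xi + \op{PD}(A))$ is identified with the sutured ECH summand $ECH(M(K), \Gamma_K, \xi, A)$, where $A \in H_1(M(K);\Z)$. Substituting $\mathfrak{t} = \mathfrak{s}_\xi + \op{PD}(A)$ and using $c_1(\mathfrak{s}_\xi + \op{PD}(A)) = c_1(\mathfrak{s}_\xi) + 2\op{PD}(A)$ converts the indexing condition $\langle c_1(\mathfrak{t}), [S] \rangle = 2d$ into the condition $\langle c_1(\mathfrak{s}_\xi) + 2\op{PD}(A), [S]\rangle = 2d$ appearing in the statement.

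The step I expect to require the most care is verifying that the partial Spin$^c$-splitting in the monopole-to-ECH isomorphism genuinely matches the indexing used by Kronheimer and Mrowka when they group summands by the pairing with $[S]$. This is the exact analogue of the subtlety encountered in Corollary~\ref{cor: knots}, where Theorem~\ref{thm: sutured manifoldbis} provides only a partial splitting but one that is strong enough to control $\langle c_1(\mathfrak{s}), [S]\rangle$. The main task is therefore to ensure that the monopole version of the partial splitting, which follows by threading the Spin$^c$-refined Taubes isomorphism \cite{T2}--\cite{T6} through the contact closure construction used for Theorem~\ref{thm: sutured monopole}, is compatible with the affine identification between relative Spin$^c$-structures and shifts by $\op{PD}(A)$ for $A \in H_1(M(K);\Z)$. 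Once this compatibility is in place, the proof is just a re-indexing of the direct sum, exactly as in the Heegaard Floer case.
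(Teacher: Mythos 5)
Your proposal is correct and follows the same route as the paper, which simply notes that the proof of Corollary~\ref{cor: isomorphism for knots} goes through verbatim using the formal properties of $SHM$ and the Kronheimer–Mrowka definition of $HKM$ via the sutured knot complement. The one ingredient you leave implicit, and which the paper's argument for Corollary~\ref{cor: isomorphism for knots} makes explicit, is the observation that when $R_\pm$ are annuli the subgroup $K_\psi$ vanishes for every gluing $\psi$; this is precisely what makes the relative $\mathrm{Spin}^c$-summands $SHM(-M(K),-\Gamma_K,\mathfrak{t})$ that appear in your formula well defined (compare the Remark following Corollary~\ref{cor: knot monopole}, which warns that $SHM$ does not in general admit such a decomposition) and promotes the partial splitting to the full one needed for the re-indexing step.
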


\begin{rmk}
  The reason why sutured monopole Floer homology does not have a decomposition into relative Spin$^c$ summands but knot monopole Floer homology does have an Alexander grading is the same reason why we could not get a full Spin$^c$-decomposition in Theorem \ref{thm: sutured manifold} but we could prove that the isomorphism in Corollary \ref{cor: knots} preserves the Alexander grading.
\end{rmk}

\s\n
{\em Acknowledgements.}  A significant advance in this project was made when the first two authors met at the ``Singular Workshop'', held at the Renyi Institute as part of the Erd\H{o}s Center's semester on singularities and low dimensional topology, and we are grateful to the organizers for the opportunity. We warmly thank Francesco Lin for suggesting to us the proof of Lemma \ref{vanishing of U in HM} which establishes the vanishing of the $U$-map for monopole Floer homology in the relevant Spin$^c$-structures. We thank John Pardon for his question leading to Theorem \ref{thm: characterization} and which motivated us to investigate a sutured version of our isomorphism.  KH is grateful to Yi Ni and the Caltech Mathematics Department for their hospitality during his sabbatical in 2018.

\section{Sutured manifolds and their Floer homologies}

In this section we review some ingredients from \cite{CGHH}, \cite{CGH0} and \cite{Ju1}.   All the Floer-type homology groups will be defined over the ground field $\F= \Z/ 2 \Z$.

\subsection{Balanced sutured manifolds} 

The various sutured invariants mentioned in the introduction are defined for {\em balanced} sutured manifolds, a restricted class of sutured manifolds introduced by Juh\'asz in \cite{Ju1}. Here we present the definition in a slightly modified form because it is convenient for us to present $M$ as a manifold with corners and include the choice of a tubular neighborhood  of the suture in the definition.

\begin{defn}\label{dfn: balanced sutured manifold}
A {\em  balanced} sutured $3$-manifold is a triple $(M,\Gamma, U(\Gamma))$, where $M$ is a compact $3$-manifold with boundary and corners,  $\Gamma$ is an oriented $1$-manifold in $\partial M$ called the {\em suture}, and $U(\Gamma) \simeq [-1,0] \times \Gamma \times [-1,1]$ is a neighborhood of $\Gamma \simeq \{0 \} \times \Gamma \times \{ 0 \}$ in $M$ with coordinates $(\tau,t)\in [-1,0]\times [-1,1]$, such that the following hold:
\begin{itemize}
\item $M$ has no closed components; 
\item $U(\Gamma)\cap \partial M \simeq (\{0 \} \times \Gamma \times [-1,1]) \cup ([-1,0] \times \Gamma \times \{ -1\}) \cup ([-1,0] \times \Gamma \times \{ 1\})$;
\item $\partial M \setminus (\{ 0\}  \times \Gamma \times (-1,1))$ is the disjoint union of two submanifolds which we call $R_- (\Gamma )$ and $R_+(\Gamma )$, where the orientation of $\partial M$ agrees with that of $R_+ (\Gamma)$ and is opposite that of $R_-
(\Gamma )$, and the orientation of $\Gamma$ agrees with the boundary
orientation of $R_\pm(\Gamma)$;
\item the corners of $M$ are precisely $\{ 0\}  \times \Gamma \times \{ \pm 1\}$;
\item $R_{\pm}(\Gamma)$ have no closed components and $\chi(R_-(\Gamma))=\chi(R_+(\Gamma))$. 
\end{itemize}
\end{defn}

\begin{defn}
If $(M,\Gamma,U(\Gamma))$ is a sutured $3$-manifold, $(M,\Gamma,U(\Gamma),\xi)$ is a {\em sutured contact manifold} if there exists a contact form $\lambda$ for $\xi$ with Reeb vector field $R_\lambda$ such that:
\begin{enumerate}
\item[(C1)] $R_\lambda$ is positively transverse to $R_+ (\Gamma)$ and negatively transverse to $R_-(\gamma)$;
\item[(C2)] $\lambda =Cdt+\beta$ on $U(\Gamma)$ for some constant $C>0$, where $\beta$ is independent of $t$. In particular, $R_\lambda =\frac{1}{C} \partial_t$ on $U(\Gamma)$.
\end{enumerate}
A contact form $\lambda$ satisfying (C1) and (C2), and the contact structure $\xi= \ker \lambda$, are said to be {\em adapted to $(M,\Gamma,U(\Gamma))$.}
\end{defn}

From now on, to simplify notation, we will always omit the neighborhood $U(\Gamma)$ in the data associated to a sutured contact manifold. Sometimes we will even regard $M$ as a manifold with (smooth) boundary and $\Gamma$ as a closed codimension-one submanifold with boundary; in such a case it is understood that we introduce convex corners along the boundary of a neighborhood of $\Gamma$.

\subsection{Sutured Floer homology and knot Floer homology} \label{ssec: knot Floer and sutures}

The sutured Heegaard Floer homology $SFH(M,\Gamma)$ of a balanced sutured $3$-manifold $(M,\Gamma)$ is a topological invariant of $(M,\Gamma)$.  It decomposes according to relative $\mbox{Spin}^c$-structures:
$$SFH(M,\Gamma)= \bigoplus_{\frak{s}\in \operatorname{Spin}^c(M, \Gamma)} SFH(M,\Gamma,{\frak s}).$$
We refer to the original paper \cite{Ju1} for the definition.

If $M$ is a closed manifold and $B \subset M$ is a closed ball, we define the balanced sutured manifold $(M_B, \Gamma_B)$, where $M_B= M \setminus \op{int}(B)$ and $\Gamma_B$ is a connected, embedded closed curve in $\partial M_B \simeq S^2$. (In \cite{Ju1} the sutured manifold $(M_B, \Gamma_B)$ is denoted by $M(1)$.) By \cite{Ju1} there is a tautological isomorphism
\begin{equation}\label{eqn: sutured-hat isomorphism in HF}
  \widehat{HF}(M) \simeq SFH(M_B, \Gamma_B).
\end{equation}

When $K$ is a knot in a $3$-manifold $M$, one can form the sutured manifold
$$(M(K), \Gamma_K) = (M\setminus \op{int}(N(K)), \Gamma_K),$$
where $N(K)$ is a tubular neighborhood of $K$ in $M$ and $\Gamma_K$ consists of two disjoint curves parallel to the meridian of $K$ in $\partial N(K)$.
Let $\widehat{HFK}(M,K)$ be the hat version of knot Floer homology defined in \cite{OSz3}. Then by \cite{Ju1} there is a (tautological) isomorphism
\begin{equation}\label{eqn: sutured-knot isomorphism in HF}
  \widehat{HFK}(M,K)\simeq SFH(M(K), \Gamma_K).
\end{equation}

Assume now that $K$ bounds an oriented embedded surface $\Sigma \subset M$. Let $M_0(K)$ be the $3$-manifold obtained by zero-surgery on $M$ along $K$, where the surgery coefficient is computed with respect to the framing induced by $\Sigma$. Then the knot Floer homology group decomposes according to Spin$^c$-structures on $M_0(K)$:
$$\widehat{HFK}(M,K) = \bigoplus_{\tiny\underline{\mathfrak{s}} \in \operatorname{Spin}^c(M_0(K))} \widehat{HFK}(M, K, \underline{\mathfrak{s}}).$$
Let $\widehat{\Sigma} \subset M_0(K)$ be the closed surface obtained by capping off $\Sigma$. Every relative Spin$^c$-structure $\mathfrak{s} \in \operatorname{Spin}^c(M(K), \Gamma_K)$ extends uniquely to a Spin$^c$-structure $\underline{\mathfrak{s}} \in \operatorname{Spin}^c(M_0(K))$ such that
$$\langle c_1(\mathfrak{s}), [\Sigma] \rangle = \langle c_1(\underline{\mathfrak{s}}), [\widehat{\Sigma}] \rangle$$
and Equation \eqref{eqn: sutured-knot isomorphism in HF} can be refined to
\begin{equation}
\widehat{HFK}(M, K, \underline{\mathfrak{s}})\simeq SFH(M(K), \Gamma_K, \mathfrak{s}).
\end{equation}

Finally we recall that one defines, for $d\in \Z$,
$$\widehat{HFK}(M,K, [\Sigma], d) = \bigoplus_{\tiny\begin{array}{c} \underline{\mathfrak{s}} \in \operatorname{Spin}^c(M_0(K)) \\ \langle c_1(\underline{\mathfrak{s}}), [\widehat{\Sigma}] \rangle = 2d \end{array}} \widehat{HFK}(M, K, \underline{\mathfrak{s}}).$$
The integer $d$ is called the {\em Alexander grading}.



\subsection{Sutured ECH}

Let $\lambda$ be a nondegenerate contact form adapted to $(M,\Gamma)$ and $J$ a {\em tailored} almost complex structure from \cite[Section~3.1]{CGHH}.  Since such a $J$ prevents families of holomorphic curves in the symplectization of $M$ from exiting along its boundary \cite[Proposition~5.20]{CGHH}, Hutchings' definition of ECH extends in a straightforward manner to $(M,\Gamma,\lambda,J)$. Just recall here that the sutured ECH chain complex $ECC(M,\Gamma,\lambda,J)$ is generated over $\F$ by orbit sets $\bs\gamma =\{ (\gamma_i,m_i)~|~ i=1,\dots,k; k\in \Z_{\geq 0}\}$ --- this includes the empty set --- where $\gamma_i$ is a simple orbit of the Reeb vector field $R_\lambda$, $m_i \in \Z_{>0}$, and $m_i =1$ whenever $\gamma_i$ is a hyperbolic orbit. We will sometimes write the orbit set $\bs\gamma$ multiplicatively as $\prod_{i}\gamma_i^{m_i}$.  We call $m_i$ the {\em multiplicity} of $\gamma_i$ in $\bs{\gamma}$.

\begin{convention}
In this paper, when we write ``orbit" we mean ``closed/periodic orbit". 
\end{convention}

The coefficient $\langle \partial \bs\gamma,\bs\gamma' \rangle$ in the differential counts ECH index $I=1$ $J$-holo\-morphic curves in the symplectization of $(M,\lambda)$ that are asymptotic to the orbit sets $\bs\gamma$ at $+\infty$ and $\bs\gamma'$ at $-\infty$; see \cite{Hu1}. 
The ECH index $1$ property implies strong restrictions on the asymptotic behavior of a curve approaching an orbit, called {\em partition conditions}, 
for which we refer to \cite[Definitions 4.13 and 4.14 and Theorem~4.15]{Hu2}.
Relying on the analogous result for closed manifolds, we proved in \cite[Theorem~10.2.2]{CGH0} (see also \cite{KS}) that sutured ECH, denoted by $ECH(M,\Gamma,\xi)$, is an invariant of the sutured contact $3$-manifold $(M,\Gamma,\xi)$.  As in the closed case, there exists a  direct sum decomposition into homology classes $A\in H_1 (M;\Z)$ of orbit sets as follows: 
$$ECH(M,\Gamma,\xi)= \bigoplus_{A\in H_1(M;\Z)}ECH(M,\Gamma,\xi,A).$$
If $M$ is a closed manifold, $B \subset M$ a closed ball, $\xi$ is a contact structure that is adapted to $(M_B, \Gamma_B)$ and $A \in H_1(M; \Z) \simeq H_1(M_B; \Z)$, then we define
$$\widehat{ECH}(M, \xi, A) = ECH(M_B, \Gamma_B, \xi, A).$$
The hat version of ECH was originally defined as the mapping cone of a $U$-map, and its equivalence with a sutured ECH was proved in \cite[Theorem 10.3.1]{CGH0}.

\subsection{Periodic Floer homology and sutured ECH} \label{ssec: PFH}

When $K$ is a fibered knot in $M$, the sutured ECH of $(M(K), \Gamma_K)$ can be interpreted as a version of the periodic Floer homology of a special representative of the monodromy of $K$. 

Let $S$ be a fiber of $K$ and let $(\rho, \theta)$ be coordinates on a collar neighborhood $[-1, 0] \times S^1\subset \partial S$ such that $\partial S= \{0 \} \times S^1$. There exist a $1$-form $\lambda$ and a representative $\hh \colon S \stackrel\sim\to S$ of the monodromy such that:
\begin{itemize}
  \item $d \lambda$ is an area form on $S$ and $\lambda = e^\rho d \theta$ near $\partial S$;
  \item $\hh^*\lambda - \lambda$ is exact;
  \item the periodic points of $\hh$ in $\op{int}(S)$ are nondegenerate; and
  \item $\hh|_{\partial S}=\op{id}_{\bdry S}$ and the linearized first return map at every point of $\partial S$ is of the form $\scriptsize\left (\begin{matrix} 1 & 0 \\ a & 1 \end{matrix} \right)$ with $a<0$ (i.e., $\partial S$ is a negative Morse-Bott circle of fixed points).
\end{itemize}
The existence of $\lambda$ and $\hh$ follows from \cite[Lemma 9.3.2]{CGH0} and a standard genericity argument for nondegenerate periodic points.

The mapping torus $N_{(S, \hh)}$ of $(S,\hh)$ carries a suspension flow which is transverse to the fibers and whose first return map is $\hh$. The boundary of $N_{(S, \hh)}$ admits an $S^1$-family of simple orbits of the suspension flow and we choose one orbit that we call $h$. As in the definition of ECH, the {\em periodic Floer homology} chain complex $PFC^\sharp(\hh)$ is generated, as a vector space over $\F$, by orbit sets $\bs\gamma =\{ (\gamma_i,m_i) ~|~ i=1,\dots,k;  k\in \Z_{\geq 0}\}$ (including the empty set), where $\gamma_i$ is a simple orbit of the suspension flow in $\op{int}(N_{(S, \hh)})$ or the orbit $h$ on the boundary,  $m_i \in \Z_{>0}$, and $m_i =1$ whenever $\gamma_i$ is a hyperbolic orbit or $h$ (i.e., $h$ is treated as a hyperbolic orbit, hence the symbol $h$). The name ``periodic Floer homology'' is due to the fact that closed orbits of the suspension flow are in bijection with orbits of periodic points of $\hh$. 

The manifold $N_{(S, \hh)}$ carries a natural stable Hamiltonian structure $(\alpha_0, \omega)$ induced by $d \lambda$ (see \cite[Section 3.1]{CGH1}). Let $J$ be a generic almost complex structure on $\R \times N_{(S, \hh)}$ which is adapted to $(\alpha_0, \omega)$ in the sense of Definition \cite[Definition 3.2.1]{CGH1}. The analytical foundations of ECH go through for stable Hamiltonian structures on mapping tori (see \cite{Hu1} and \cite{LT}) and therefore we define the boundary operator on $PFH^\sharp(\hh)$ by counting $I=1$ $J$-holomorphic maps in $\R \times N_{(S, \hh)}$ asymptotic to orbit sets at the positive and negative ends. Here the situation is less standard than the one considered in \cite{LT} due to the presence of the orbit $h$ belonging to a Morse-Bott family. This situation was treated in detail in \cite[Section 7]{CGH0}, where a similar chain complex $ECC^\sharp(N_{(S, \hh)}, \alpha)$ is defined for a contact form $\alpha$ on $N_{(S, \hh)}$, and the argument goes through unchanged for periodic Floer homology.

Periodic Floer homology splits as a direct sum over homology classes
$$PFH^\sharp(\hh) = \bigoplus_{A \in H_1(N_{(S, \hh)})} PFH^\sharp(\hh, A),$$
as usual. We also define, for $d \in \Z$,
$$PFH^\sharp(\hh, d)= \bigoplus_{A \cdot [S] = d} PFH^\sharp(\hh, A),$$
where $[S]$ is the class of a fiber and $A \cdot [S]$ is the algebraic intersection number.

Note that $N_{(S, \hh)}\simeq M(K)$. We have the following isomorphism.
  
\begin{lemma}\label{ECH and PFH}
Let $\xi$ be a contact structure on $(M(K), \Gamma_K)$ obtained by a small perturbation of the tangent planes of the fibers. Then, for every $A \in H_1(M(K))$,
$$ECH(M(K), \Gamma_K, \xi, A) \simeq PFH^\sharp(\hh, A).$$
\end{lemma}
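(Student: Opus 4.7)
\medskip

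\noindent\emph{Proof plan.} The strategy is to realize $ECH(M(K),\Gamma_K,\xi)$ and $PFH^\sharp(\hh)$ by the same chain complex, by building a contact form $\lambda_\epsilon$ on $M(K)\simeq N_{(S,\hh)}$ that is a small perturbation of the stable Hamiltonian structure $(\alpha_0,\omega)$ used to define $PFH^\sharp$, and whose Reeb vector field matches (up to the Morse-Bott analysis at the boundary) the suspension flow of $\hh$.

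First, I would construct the contact form. Starting from $\alpha_0$ (whose kernel is tangent to the fibers of $\pi\colon N_{(S,\hh)}\to S^1$), set $\lambda_\epsilon=\alpha_0+\epsilon\widetilde\lambda$, where $\widetilde\lambda$ restricts to the primitive $\lambda$ on each fiber $S$. For $\epsilon>0$ small this is a contact form; its kernel is a $C^0$-small perturbation of the fiberwise tangent planes, so $\xi=\ker\lambda_\epsilon$. A further adjustment in a collar of $\bdry N_{(S,\hh)}=\bdry S\times S^1$ is made so that $\lambda_\epsilon$ satisfies the adapted conditions (C1) and (C2) with respect to the two meridian sutures $\Gamma_K\subset \bdry S\times S^1$; this amounts to rotating the contact planes along $\bdry S$ so that $R_{\lambda_\epsilon}$ becomes transverse to the two annuli $R_\pm(\Gamma_K)$ while remaining equal to $\tfrac1C\bdry_t$ near $\Gamma_K$.

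Second, I would analyze the Reeb orbits below a fixed action bound $L$. For $\epsilon$ sufficiently small (depending on $L$), $R_{\lambda_\epsilon}$ is $C^0$-close to the suspension direction $R_{\alpha_0}$ on any compact subset of $\op{int}(N_{(S,\hh)})$, so its orbits in $\op{int}(M(K))$ of action $\le L$ are in canonical bijection with the periodic points of $\hh$ of period $\le L/C$; nondegeneracy of these points is in our hypotheses on $\hh$. On the boundary, the $S^1$-family of $R_{\alpha_0}$-orbits on $\bdry S\times S^1$ is a negative Morse-Bott torus (by the hypothesis $a<0$ on the linearized first return map); after the boundary rotation above, together with a standard Morse-Bott perturbation as in \cite[Section~7]{CGH0}, this family gives rise to exactly one surviving closed orbit near $\Gamma_K$, a hyperbolic orbit $h_\epsilon$, the complementary elliptic orbit being ruled out by the transversality condition (C1) on $R_\pm(\Gamma_K)$. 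Identifying $h_\epsilon$ with the distinguished orbit $h$ used to define $PFC^\sharp(\hh)$, I obtain an identification of generators of $ECC(M(K),\Gamma_K,\lambda_\epsilon,J)$ with generators of $PFC^\sharp(\hh)$; this identification preserves the homology class in $H_1(M(K))=H_1(N_{(S,\hh)})$, and a direct check shows that it preserves the ECH index (both are expressed in terms of Conley-Zehnder indices of the same underlying orbits together with the same relative first Chern class on $\ker\alpha_0$).

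Third, I would match the differentials via an SFT-type compactness argument as $\epsilon\to 0$. A tailored almost complex structure $J$ on $\R\times M(K)$ in the sense of \cite[Section~3.1]{CGHH} prevents finite-energy curves from escaping through $\bdry M(K)$ (\cite[Proposition~5.20]{CGHH}), so $\op{ECH}$-index-$1$ curves with uniformly bounded action converge to $\op{ECH}$-index-$1$ $J_0$-holomorphic buildings for $(\alpha_0,\omega)$ on $\R\times N_{(S,\hh)}$, and conversely curves for the stable Hamiltonian structure can be perturbed to curves for $\lambda_\epsilon$. Near the Morse-Bott orbit $h_\epsilon$ this is the same gluing/obstruction analysis carried out in \cite[Section~7]{CGH0} for the closed mapping torus. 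Restricting to a fixed action window gives an isomorphism of the filtered chain complexes in each homology class $A$; passing to the direct limit as $L\to\infty$ yields
\[
ECH(M(K),\Gamma_K,\xi,A)\simeq PFH^\sharp(\hh,A).
\]

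The main obstacle is the second step at the boundary: one must verify that, after imposing the sutured adapted conditions on the ambient form, the negative Morse-Bott torus of $R_{\alpha_0}$-orbits produces the single hyperbolic orbit $h_\epsilon$ and no elliptic companion. Once this Morse-Bott-in-a-sutured-setting analysis is in place, the remainder of the argument is a direct adaptation of the closed mapping-torus correspondence of \cite[Sections~3 and~7]{CGH1} and \cite[Section~7]{CGH0}, combined with the confinement of holomorphic curves provided by the tailored almost complex structures of \cite{CGHH}.
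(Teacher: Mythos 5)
Your overall strategy---constructing a contact form that is a small perturbation of the stable Hamiltonian structure $(\alpha_0,\omega)$, matching generators, and passing an SFT compactness/Morse--Bott correspondence argument to match the differentials---is indeed the content of the references the paper cites for this lemma. The paper's own proof is a one-line citation to \cite[Theorem~10.3.2]{CGH0} (which relates the sutured $ECH$ of $(M(K),\Gamma_K,\xi)$ to the auxiliary complex $ECH^\sharp(N_{(S,\hh)},\alpha)$ defined for a genuine contact form $\alpha$ near the Morse--Bott boundary) and \cite[Section~3.6]{CGH1} (which passes from $\alpha$ to the stable Hamiltonian structure and hence to $PFH^\sharp$). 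So you are re-deriving rather than using the two cited black boxes, and the two-step factoring in the paper is cleaner than collapsing both into a single $\epsilon\to 0$ limit.

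The genuine gap is in your second step, at the boundary. You assert that after imposing (C1)--(C2) and applying a Morse--Bott perturbation, ``exactly one surviving closed orbit'' $h_\epsilon$ appears, ``the complementary elliptic orbit being ruled out by the transversality condition (C1).'' Condition (C1) constrains the direction of $R_\lambda$ along $R_\pm(\Gamma_K)$; it does not eliminate a closed orbit sitting in a collar of $\partial M(K)$. In fact, when one interpolates the Reeb direction from $\partial_t$ (longitudinal) at $\Gamma_K$ to the suspension direction (meridional) in the interior, the slope on the collar tori sweeps through a whole family of rational slopes, each of which contributes a Morse--Bott torus and hence an elliptic--hyperbolic pair under perturbation; the meridional one in particular contributes \emph{both} an $e$ and an $h$. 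Getting down to the single generator $h$ that appears in $PFC^\sharp(\hh)$ requires the careful control carried out in \cite[Section~7 and Theorem~10.3.2]{CGH0}: a specific shape of the contact form near the boundary (analogous to the ``windshield wiper'' buffer in Section~\ref{subsubsection: the buffer zone} of the present paper), an excision/positioning of the suture so that the unwanted elliptic orbit is pushed to (or past) the boundary, and a direct-limit argument to discard the long horizontal orbits and the extra-slope $e$--$h$ pairs. Without reproducing that analysis, your step 2 does not establish a bijection of generators, and consequently step 3 is built on an unverified premise. If you want a self-contained proof you would need to replace the appeal to (C1) with an explicit construction of the adapted contact form near $\partial M(K)$ matching \cite[Section~7]{CGH0}, together with the accompanying action-filtration and direct-limit bookkeeping.
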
 
  
\begin{proof}
The lemma follows from \cite[Theorem 10.3.2]{CGH0} and the arguments of \cite[Section 3.6]{CGH1}.
\end{proof}

\section{Proofs of Theorems \ref{thm: sutured manifold} and \ref{thm: sutured monopole}}


\subsection{Reduction to connected sutures}

In this subsection we show that we may assume without loss of generality that the suture of $(M, \Gamma, \xi)$ is connected.

\begin{lemma} \label{lemma: reduction to contact sutures} 
If Theorems \ref{thm: sutured manifold} and \ref{thm: sutured monopole} hold for sutured contact manifolds with connected sutures, then they hold for all sutured contact manifolds.
\end{lemma}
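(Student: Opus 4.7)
The plan is to reduce to the connected-suture case by a sequence of local modifications that merge two components of $\Gamma$ into one while preserving all three Floer invariants. I would proceed by induction on the number $n$ of components of $\Gamma$; it suffices to describe the inductive step.

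Suppose $\Gamma$ has $n \geq 2$ components and pick two of them, $\gamma_1$ and $\gamma_2$. The key operation is to attach a contact $1$-handle $H$ in the sense of \cite{CGHH} to $(M,\Gamma,\xi)$, with one foot centered on a short arc of $\gamma_1$ and the other foot centered on a short arc of $\gamma_2$; each foot disk is cut by $\Gamma$ into a half-disk in $R_+(\Gamma)$ and a half-disk in $R_-(\Gamma)$. After attaching $H$, the resulting sutured contact manifold $(M', \Gamma', \xi')$ agrees with $(M,\Gamma,\xi)$ off the handle, and the two suture components $\gamma_1, \gamma_2$ are replaced by a single suture component running through $H$, so $\Gamma'$ has $n-1$ components.

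Next I would verify three invariance statements for this handle attachment.

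\emph{(i) ECH invariance.} I would choose the contact form $\lambda'$ on $M'$ so that $\lambda'$ restricts to $\lambda$ on $M$ and so that its Reeb vector field $R_{\lambda'}$ has no closed orbit inside $H$, modeling $H$ on a small product region with Reeb flow transverse to the cross-sections. Extending the tailored almost complex structure of \cite[Section 3.1]{CGHH} across $H$, the arguments of \cite[Proposition 5.20]{CGHH} confine the relevant $J$-holomorphic curves to $M$, yielding a tautological isomorphism of chain complexes and hence $ECH(M,\Gamma,\xi) \simeq ECH(M',\Gamma',\xi')$.

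\emph{(ii) SFH invariance.} The co-core of the handle $H$ is a product disk in $(M', \Gamma')$, i.e., a properly embedded disk meeting $\Gamma'$ transversely in exactly two points, whose decomposition recovers $(M, \Gamma)$. By Juh\'asz's invariance of sutured Floer homology under decomposition along a product disk \cite{Ju1}, we obtain $SFH(-M,-\Gamma) \simeq SFH(-M',-\Gamma')$.

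\emph{(iii) SHM invariance.} The analogous invariance for sutured monopole Floer homology under attachment of a contact $1$-handle of this type follows from \cite{BS}, giving $SHM(-M,-\Gamma) \simeq SHM(-M',-\Gamma')$.

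Iterating the handle attachment $n-1$ times produces a sutured contact manifold with connected suture, for which the hypothesis of the lemma gives the isomorphisms of Theorems~\ref{thm: sutured manifold} and~\ref{thm: sutured monopole}; transporting through the identifications above yields the corresponding statements for $(M,\Gamma,\xi)$. I expect the main obstacle to be part (i): one must model $H$ contactomorphically and pin down the Reeb and almost complex data so that the resulting ECH chain complex is literally unchanged, uniformly over the orbit sets that appear. Parts (ii) and (iii) follow from known invariance properties of $SFH$ and $SHM$ once the topological model of the handle is matched with a product disk decomposition, so the bulk of the work lies on the ECH side.
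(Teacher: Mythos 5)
Your proposal follows essentially the same route as the paper: glue contact $1$-handles (interval-fibered extensions modeled on $(H\times[-1,1],\ker(dt+\beta))$ with $\beta$ a Liouville form on the core disk $H$, so the Reeb flow in the handle is the product flow $\partial_t$ and has no closed orbits there) to merge suture components, then invoke $ECH$ invariance under interval-fibered extension (the paper cites \cite[Section 9]{CGHH}, which encapsulates the confinement argument you sketch in (i)), Juh\'asz's $SFH$ invariance under product disk decomposition along the cocores, and $SHM$ invariance under the corresponding product annulus decomposition. The only cosmetic differences are that you phrase the reduction inductively and that for $SHM$ you cite \cite{BS} while the paper cites \cite[Lemma 4.6, Propositions 6.5, 6.7]{KM} directly.
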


\begin{proof}
Let $(M, \Gamma, \xi)$ be a sutured contact manifold with disconnected suture. We glue sutured contact product $1$-handles $(H\times [-1,1], \ker(dt+ \beta))$ to $(M,\Gamma)$, where $t$ is the coordinate of $[-1,1]$ and $\beta$ is a Liouville form on $H$, i.e., we take an {\em interval-fibered extension}, to obtain a sutured contact manifold $(M',\Gamma',\xi')$ with a connected suture $\Gamma'$. From \cite[Section 9]{CGHH} we obtain the isomorphism
$$ECH(M',\Gamma',\lambda') \simeq ECH(M,\Gamma, \lambda).$$
From \cite[Lemma 9.13]{Ju1} we obtain the isomorphism
$$SFH(-M',-\Gamma') \simeq SFH(-M,-\Gamma),$$
since $(-M,-\Gamma)$ is obtained from $(-M',-\Gamma')$ by a sequence of product disk decompositions along the cocores of $H \times [-1,1]$. 
Finally from \cite[Lemma 4.6, Proposition 6.5 and Proposition 6.7]{KM} we obtain the isomorphism
$$SHM(-M', - \Gamma') \simeq SHM(-M, - \Gamma),$$
since there is a product annulus splitting $(-M', -\Gamma')$ into the disjoint union of $(-M, - \Gamma)$ and a product sutured manifold.  
\end{proof}

\subsection{The contact closure} \label{subsection: construction of $Y$}

Let $(M,\Gamma, \xi)$ be a sutured contact $3$-manifold with connected suture. We pick a compact, oriented surface $S$  of genus $g\geq 3$ with connected boundary, together with a $[-1,1]$-invariant contact structure $\xi$\footnote{Since the contact structures will be glued, the contact structures will all be denoted by $\xi$ in this subsection.} on $S\times [-1,1]_t$
such that: 
\begin{itemize}
\item the dividing set of $S \times \{ \pm 1 \}$ consists of a single circle in  $\operatorname{int}(S) \times \{\pm 1 \}$ bounding a disk $D \times \{ \pm 1 \}$;
\item $D \times \{ + 1 \}$ is the negative region of $S \times \{ +1 \}$ and $D \times \{ - 1 \}$ is the positive region of $S \times \{-1 \}$; and
\item  the characteristic foliation, oriented in the usual way, enters $S$ along $\bdry S$.
\end{itemize}
We then glue the product $(S\times [-1,1],\xi)$ to $(M,\Gamma, \xi)$ along $\partial S\times [-1,1] \simeq \Gamma \times [-1,1]$.
We obtain a contact $3$-manifold $(Y_\Sigma,\xi)$ with boundary components 
\begin{gather*}
\Sigma_+ =R_+ \cup_{\partial R_+ \simeq \partial S \times \{1\}} (S\times \{1\}),\\
\Sigma_- =R_- \cup_{\partial R_- \simeq \partial S \times \{-1\}} (S\times \{ -1 \}).
\end{gather*}
Lastly we consider the closed contact $3$-manifold $(Y,\xi)$ obtained by identifying $\Sigma_+$ and $\Sigma_-$ by a $\xi$-compatible diffeomorphism 
$$\widetilde{\psi} \colon \Sigma_+\to \Sigma_-$$
that is the identity
between $S\times \{1 \}$ and $S \times \{- 1\}$. We denote
$$\psi = \widetilde{\psi}|_{R_+(\Gamma)} \colon R_+(\Gamma) \to R_-(\Gamma)$$
and assume that $\psi$ is the identity between $U(\Gamma) \cap R_+(\Gamma)$ and $U(\Gamma) \cap R_-(\Gamma)$.

We let $\Sigma$ be the glued $\Sigma_+=\Sigma_-$ in $Y$, oriented as $\Sigma_+$. Let $e(\xi)$ be the Euler class of $\xi$.  Then
$$\langle e(\xi), [\Sigma] \rangle = \chi(\Sigma)-2.$$
The topological part of such a construction --- turning a sutured manifold into a closed one --- was first considered by Kronheimer and Mrowka in the context of monopole Floer homology \cite{KM}.

The key technical result of this article is the following isomorphism: 

\begin{thm}\label{thm: ECH}
  Let $(M, \Gamma, \xi)$ be a sutured contact $3$-manifold with connected suture and $(Y, \xi)$ its contact closure. Then 
  \begin{equation} \label{eq: ECH}
    \bigoplus_{A \cdot [\Sigma] =1} \widehat{ECH} (Y, \xi, A)\simeq ECH (M,\Gamma, \xi)\oplus ECH (M,\Gamma,\xi)[1].
  \end{equation}
\end{thm}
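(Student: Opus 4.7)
The plan is to choose an adapted contact form $\lambda$ on $(Y,\xi)$ whose Reeb dynamics on the auxiliary region $W = Y \setminus \op{int}(M)$ are explicitly controlled, and to exhibit a chain-level identification between the ECH complex in classes $A$ with $A \cdot [\Sigma] = 1$ and $ECC(M, \Gamma, \lambda) \otimes_\F \F \langle e, h\rangle$. After the identification $\Sigma_+ \sim \Sigma_-$ the region $W$ fibers over $S^1$ like a mapping torus with fiber $\Sigma$ and monodromy $\widetilde\psi$. I would choose $\lambda$ so that its Reeb vector field is, up to a small Morse--Bott perturbation, the suspension flow of $\widetilde\psi$. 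Since $\widetilde\psi|_{S \times \{1\}} = \op{id}$, the entire interior page $S \subset \Sigma$ consists of fixed points of the return map and supports a Morse--Bott family of simple Reeb orbits that cross $\Sigma$ algebraically once. A Morse--Bott perturbation supported near this family, patterned on the one in \cite[Section 7]{CGH0}, collapses it to a pair $\{e, h\}$ of simple Reeb orbits in $W$ crossing $\Sigma$ once, with $e$ elliptic, $h$ hyperbolic, and their Conley--Zehnder contributions differing by one. I would additionally arrange the restriction of $\widetilde\psi$ to $R_+(\Gamma)$ to be generic, so that no closed Reeb orbit threads back and forth between $M$ and $W$.

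It then follows that every orbit set $\bs\gamma$ in $(Y, \lambda)$ with $[\bs\gamma] \cdot [\Sigma] = 1$ has the form $\bs\gamma_M \cdot e$ or $\bs\gamma_M \cdot h$, where $\bs\gamma_M$ is an orbit set of $R_\lambda$ supported in $M$ and not crossing $\Sigma$. This gives a canonical vector-space identification
\[
\bigoplus_{A \cdot [\Sigma] = 1} ECC(Y, \lambda, A) \;\cong\; ECC(M, \Gamma, \lambda) \otimes_\F \F \langle e, h \rangle,
\]
with the two generators of $\F\langle e, h\rangle$ in gradings differing by one. The heart of the argument is to show that the ECH differential respects this tensor decomposition. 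With a tailored almost complex structure in the sense of \cite[Section~3.1]{CGHH}, holomorphic curves cannot escape through $\bdry M$ by \cite[Proposition~5.20]{CGHH}; an ECH-index and partition-condition analysis (via \cite[Theorem~4.15]{Hu2}) then shows that every ECH index $1$ curve in $\R \times Y$ connecting two orbit sets in the above classes either lies in $\R \times M$, thereby contributing to $\partial_M \otimes \op{id}$, or is a Morse--Bott cylinder between $e$ and $h$, whose contribution cancels by the standard $S^1$-symmetry argument for the Morse--Bott perturbation, so that $\op{id} \otimes \partial_{eh} = 0$.

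For the hat version I would place the ball $B$ in $W$ disjoint from $\Sigma$ and from all the Reeb orbits above, so that $\widehat{ECH}(Y,\xi, A) = ECH(Y_B, \Gamma_B, \xi, A)$ inherits the same chain-level decomposition. Passing to homology then yields
\[
\bigoplus_{A\cdot [\Sigma] =1} \widehat{ECH}(Y,\xi,A) \;\simeq\; ECH(M, \Gamma, \xi) \oplus ECH(M, \Gamma, \xi)[1],
\]
as claimed. The hard part will be precisely this moduli-space analysis: I would need to carefully rule out ECH index $1$ curves that mix the $M$- and $W$-parts of the orbit sets, as well as curves that change the $e$-versus-$h$ label in an uncancelled way. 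The techniques are refinements of the open-book analysis in \cite{CGH0, CGH1} adapted to the auxiliary region $S \times [-1,1]$ of the contact closure; compactness together with ECH-index estimates will form the bulk of the technical work.
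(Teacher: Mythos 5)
There is a genuine gap, and it sits at the heart of your plan. You propose to perturb the surface-worth of fixed points on the interior page $S\subset\Sigma$ into a single elliptic-hyperbolic pair $\{e,h\}$. That is not what Morse--Bott perturbations do: an $S^1$-family of orbits (a Morse--Bott torus, as in \cite[Section 7]{CGH0}) perturbs into an elliptic-hyperbolic pair, but a $2$-parameter family of orbits parametrized by a genus-$g$ surface $S$ perturbs into one nondegenerate orbit for each critical point of a Morse function on $S$, of which there are at least $2g$ of index one (not to mention boundary contributions). So after the perturbation the region $W=Y\setminus\op{int}(M)$ carries many simple Reeb orbits crossing $\Sigma$ once, and the class of orbit sets in $A\cdot[\Sigma]=1$ is far from being $\{\boldsymbol\gamma_M\cdot e,\boldsymbol\gamma_M\cdot h\}$. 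The factor $\F^2$ in the statement comes out only after computing the homology of a nontrivial auxiliary complex built from those extra orbits; in the paper this is Lemma~\ref{lemma: suture}, which analyzes a complex containing $2g$ hyperbolic orbits $\delta_i$, a chain $e_2,h_2,e_{1/n},h_{1/n}$ spiraling toward a special hyperbolic orbit $h_0$, and shows, via the holomorphic plane over $D_1\times S^1$ and the pair-of-pants curves from \cite{HS1,HS2}, that it has two-dimensional homology generated by $[e_{1/1}],[h_{1/1}]$.

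Two further problems compound this. First, you invoke \cite[Proposition~5.20]{CGHH} to prevent curves from ``escaping through $\partial M$,'' but that result controls the boundary behavior of the sutured manifold $M$ in its own symplectization and says nothing about curves in $\R\times Y$ for the closed manifold $Y$, where there is no boundary to block. The paper's substitute for this is the buffer zone of Section~\ref{subsubsection: the buffer zone}: a carefully engineered collar of $\partial Y'$ whose Reeb dynamics (the ``windshield wiper'' of Morse--Bott tori of rotating slope plus the four horizontal orbits $e_{\pm,0},h_{\pm,0}$) are precisely what feeds the blocking and trapping arguments of Lemma~\ref{lemma: spectral}. Without some device of this sort there is no control on index-$1$ curves mixing the $M$- and $W$-parts. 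Second, after gluing $R_+\sim_\psi R_-$ there are Reeb orbits entirely inside $Y'=M/(R_+\sim R_-)$ that intersect $\Sigma$ once (they pass through the glued surface $R$); these give the summand $\mathfrak{C}_1$, which is not in your inventory at all. The paper has to show $\mathfrak{C}_1$ is acyclic (Lemma~\ref{lemma: zero}), and that argument uses the $h_0$-capping plane in an essential way. Your ``genericity of $\widetilde\psi|_{R_+}$'' does not rule these out. In short, the proposal replaces the bulk of the actual work --- the buffer-zone construction, the orbit inventory of Lemma~\ref{lemma: list of orbits}, the slope/index estimates of Lemma~\ref{lemma: spectral}, and the homology computations of Lemmas~\ref{lemma: zero} and~\ref{lemma: suture} --- with claims that, as stated, are false.
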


The proof of this theorem will occupy Section \ref{sec: proof of the main technical theorem}.

\subsection{Proofs of Theorems \ref{thm: sutured manifold} and \ref{thm: sutured monopole} assuming Theorem \ref{thm: ECH}}

We introduce the notation
\begin{gather*}
  \widehat{HF}(-Y|\Sigma) = \bigoplus_{\langle c_1(\mathfrak{s}), [\Sigma]\rangle = \chi(\Sigma)} \widehat{HF}(-Y, \mathfrak{s}), \\
  \widehat{ECH}(Y, \xi|\Sigma) = \bigoplus_{A \cdot [\Sigma]=1} \widehat{ECH}(Y, \xi,A).
\end{gather*}
Similar notation will be used also for $HF^+$ and monopole Floer homology.

\begin{lemma} \label{lemma: e1}
$\widehat{HF}(-Y | \Sigma) \simeq \widehat{ECH} (Y,\xi | \Sigma).$
\end{lemma}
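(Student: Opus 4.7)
The plan is to deduce this lemma directly from the CGH isomorphism $\widehat{ECH}(Y, \xi) \simeq \widehat{HF}(-Y)$ established in \cite{CGH0, CGH1, CGH2}, by checking that under this isomorphism the two summands in the statement correspond term-by-term.

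First I would recall that the CGH isomorphism respects the decomposition according to $\mathrm{Spin}^c$-structures / $H_1$-classes: the summand $\widehat{ECH}(Y, \xi, A)$ is sent to $\widehat{HF}(-Y, \mathfrak{s}_\xi + \operatorname{PD}(A))$, where $\mathfrak{s}_\xi$ is the canonical $\mathrm{Spin}^c$-structure of $\xi$ (using the natural identification $\mathrm{Spin}^c(Y) = \mathrm{Spin}^c(-Y)$). Thus it suffices to verify that the condition $A \cdot [\Sigma] = 1$ on the ECH side corresponds exactly to the condition $\langle c_1(\mathfrak{s}), [\Sigma] \rangle = \chi(\Sigma)$ on the Heegaard Floer side, when $\mathfrak{s} = \mathfrak{s}_\xi + \operatorname{PD}(A)$.

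Next I would perform this matching using the standard identity
\[
c_1\bigl(\mathfrak{s}_\xi + \operatorname{PD}(A)\bigr) = c_1(\mathfrak{s}_\xi) + 2\operatorname{PD}(A) = e(\xi) + 2\operatorname{PD}(A),
\]
together with the computation $\langle e(\xi), [\Sigma] \rangle = \chi(\Sigma) - 2$ noted right before Theorem~\ref{thm: ECH}. Pairing with $[\Sigma]$ gives
\[
\bigl\langle c_1(\mathfrak{s}_\xi + \operatorname{PD}(A)), [\Sigma] \bigr\rangle = (\chi(\Sigma) - 2) + 2\,(A \cdot [\Sigma]),
\]
so this equals $\chi(\Sigma)$ if and only if $A \cdot [\Sigma] = 1$. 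This immediately yields
\[
\widehat{ECH}(Y, \xi \mid \Sigma) = \bigoplus_{A \cdot [\Sigma] = 1} \widehat{ECH}(Y, \xi, A) \simeq \bigoplus_{\langle c_1(\mathfrak{s}), [\Sigma] \rangle = \chi(\Sigma)} \widehat{HF}(-Y, \mathfrak{s}) = \widehat{HF}(-Y \mid \Sigma).
\]

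There is essentially no obstacle beyond correctly bookkeeping the sign/identification conventions between $\mathrm{Spin}^c$-structures on $Y$ and $-Y$ and the affine action of $H^2(Y;\mathbb{Z})$ on $\mathrm{Spin}^c(Y)$. The only content is that the CGH isomorphism is compatible with the splittings by homology class / $\mathrm{Spin}^c$-structure under the standard bijection $A \mapsto \mathfrak{s}_\xi + \operatorname{PD}(A)$; once that is cited, the lemma reduces to the one-line Chern class computation above.
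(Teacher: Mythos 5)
Your proposal is correct and follows essentially the same route as the paper: both invoke the refined CGH isomorphism $\widehat{ECH}(Y,\xi,A)\simeq\widehat{HF}(-Y,\mathfrak{s}_\xi+\operatorname{PD}(A))$ from \cite{CGH1} and use $\langle e(\xi),[\Sigma]\rangle=\chi(\Sigma)-2$ to show that $A\cdot[\Sigma]=1$ corresponds exactly to $\langle c_1(\mathfrak{s}),[\Sigma]\rangle=\chi(\Sigma)$. You merely spell out the intermediate Chern class identity $c_1(\mathfrak{s}_\xi+\operatorname{PD}(A))=e(\xi)+2\operatorname{PD}(A)$ that the paper leaves implicit.
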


\begin{proof}
Let $\mathfrak{s}_\xi$ be the canonical Spin$^c$-structure determined by $\xi$. Since $\langle e(\xi),[\Sigma]\rangle = \chi(\Sigma)-2$, the map
$$A \mapsto \mathfrak{s}_\xi+ PD(A)$$
gives a bijection between the homology classes satisfying $A \cdot [\Sigma]=1$ and the Spin$^c$-structures satisfying $\langle c_1(\mathfrak{s}), [\Sigma] \rangle = \chi(\Sigma)$.  Finally, by \cite[Theorem 1.2.1]{CGH1} there is an isomorphism
$$\widehat{ECH}(Y, \xi, A) \simeq \widehat{HF}(-Y, \mathfrak{s}_\xi+PD(A)).$$
\vskip-.23in
\end{proof}

Theorem \ref{thm: ECH} provides a link between $ECH(M, \Gamma, \xi)$ and $\widehat{ECH}(Y, \xi | \Sigma)$. In order to prove Theorem \ref{thm: sutured manifold} assuming Theorem \ref{thm: ECH}, it remains to relate $\widehat{HF}(-Y|\Sigma)$ to $SFH(-M, -\Gamma)$.


\begin{lemma}\label{genus minimizing}
If $R_+(\Gamma)$ and $R_-(\Gamma)$ have minimal genus in their relative homology class in $H_2(M, \Gamma)$, then $\Sigma$ has minimal genus in its homology class.
\end{lemma}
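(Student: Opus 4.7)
The plan is to build a taut foliation on $Y$ having $\Sigma$ as a closed leaf, and then invoke Thurston's theorem that a compact leaf of a taut foliation is Thurston norm minimizing in its homology class. Since $\Sigma$ is a connected oriented surface of genus $\geq 3$ and has no sphere components, Thurston norm minimizing is equivalent to genus minimizing for its homology class.

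First, I would observe that the hypothesis that $R_\pm(\Gamma)$ are genus minimizing in $H_2(M,\Gamma)$, combined with the balanced sutured hypothesis that neither $M$ nor $R_\pm(\Gamma)$ has closed components, places $(M,\Gamma)$ in the class of \emph{taut} sutured manifolds in the sense of Gabai (after first isotoping away any essential sphere using the adapted contact structure, which forces irreducibility of $M$ via a standard convex-surface argument applied to the characteristic foliation on such a sphere). By Gabai's existence theorem (\cite{Ga}, Theorem~5.1), $(M,\Gamma)$ carries a taut foliation $\mathcal{F}_M$ whose leaves include $R_\pm(\Gamma)$ and which can be arranged to have a product structure on the suture neighborhood $U(\Gamma)$, with leaves $\{\tau\} \times \Gamma \times [-1,1]$ near $\Gamma$.

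Next I would equip the auxiliary piece $S \times [-1,1]$ with a foliation $\mathcal{F}_S$ having $S \times \{\pm 1\}$ as leaves and a product structure near $\partial S \times [-1,1]$ matching that of $\mathcal{F}_M$ along the gluing locus. This is done by choosing a Morse function $\rho\colon S \to [0,1]$ with $\rho^{-1}(1) = \partial S$ and no critical points on $\partial S$, spinning it to produce a foliation on $S\times[-1,1]$ with $S \times \{\pm 1\}$ as closed leaves, and locally modifying near $\partial S \times [-1,1]$ so that the resulting germ agrees with that of $\mathcal{F}_M$ on $\Gamma \times [-1,1]$. Gluing $\mathcal{F}_M$ and $\mathcal{F}_S$ produces a foliation $\mathcal{F}_{Y_\Sigma}$ on $Y_\Sigma$ tangent to $\partial Y_\Sigma = \Sigma_+ \sqcup \Sigma_-$, and identifying $\Sigma_+$ with $\Sigma_-$ via $\widetilde{\psi}$ yields a foliation $\mathcal{F}_Y$ on $Y$ (possibly only $C^0$ along $\Sigma$, which is harmless here) with $\Sigma$ as a closed leaf. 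Tautness is preserved because a short arc transverse to $\Sigma$ can be closed up using a closed transversal for $\mathcal{F}_{Y_\Sigma}$ through $\Sigma_\pm$, which exists by tautness of $\mathcal{F}_M$.

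Applying Thurston's inequality to the closed leaf $\Sigma$ of the taut foliation $\mathcal{F}_Y$ shows that $\Sigma$ realizes the Thurston norm of $[\Sigma] \in H_2(Y;\Z)$. Since $\Sigma$ is connected of genus $\geq 3$, its Thurston norm equals $2g(\Sigma)-2$; together with the standard fact that the minimum genus of any connected representative of a nonseparating class equals $1+\tfrac{1}{2}\|[\Sigma]\|_T$, this yields the claimed genus minimality.

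The main obstacle is the gluing step: one must arrange that $\mathcal{F}_M$ and $\mathcal{F}_S$ agree as germs along the common product region $\Gamma \times [-1,1]$, so that their union is a genuine foliation of $Y_\Sigma$, and that the global foliation obtained after identifying $\Sigma_\pm$ remains taut. These matching and tautness-preservation issues are standard but delicate in the foliation literature, and are precisely the kind of construction used by Kronheimer–Mrowka in the analogous closure arguments for sutured monopole Floer homology.
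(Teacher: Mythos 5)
Your overall strategy matches the paper's: construct a taut foliation on the closure with $\Sigma$ as a compact leaf and invoke Thurston's norm-minimizing property of closed leaves. However, there is a genuine gap in the very first step. You claim that the adapted contact structure ``forces irreducibility of $M$ via a standard convex-surface argument applied to the characteristic foliation on such a sphere.'' This is false. An adapted contact form need not be tight, and even a tight contact structure does not imply irreducibility (e.g.\ $S^1\times S^2$ is tight and reducible; more relevantly, one can connect-sum any sutured contact manifold with an arbitrary closed contact $3$-manifold and retain an adapted contact form while making $M$ reducible). Without irreducibility, $(M,\Gamma)$ is not taut in Gabai's sense, and Gabai's existence theorem does not apply, so your foliation $\mathcal{F}_M$ may simply fail to exist.

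The paper handles this by writing $M = M'\,\#\,N$ with $N$ closed and $M'$ irreducible, checking that $(M',\Gamma)$ is taut, building the taut foliation on the closure $Y'$ of $(M',\Gamma)$, proving genus-minimality there, and then transferring genus-minimality from $Y'$ to $Y = Y'\,\#\,N$ by observing that an incompressible surface can be isotoped off the connected-sum sphere. You would need to add this reduction and the incompressibility argument. Separately, note that the paper avoids the germ-matching delicacies in your gluing of $\mathcal{F}_M$ with $\mathcal{F}_S$: it instead verifies that $Y_\Sigma'$ (the glued-up sutured manifold with empty suture) is itself taut, using Gabai's Lemma~3.12 and the fact that $(M',\Gamma)$ is obtained from $Y_\Sigma'$ by product disk and annulus decompositions, and then applies Gabai's existence theorem once to $Y_\Sigma'$. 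This is cleaner than constructing and then gluing two foliations, and it sidesteps the matching issues you flag as ``delicate.''
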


\begin{proof}
In this lemma we use Gabai's original definition of sutured manifolds, i.e., we allow empty boundary and empty sutures. Suppose that $R_{\pm}(\Gamma)$ is genus-minimizing in its class in $H_2(M, \Gamma)$. Writing $M=M' \# N$ where $\partial N = \emptyset$ and $M'$ is irreducible, $R_\pm(\Gamma)$ is still genus-minimizing in its class in $H_2(M', \Gamma)$, and therefore $(M',  \Gamma')$ is a taut sutured manifold; see \cite[Definition 2.10]{Ga}.

We have connected sum decompositions $Y_\Sigma= Y_\Sigma' \# N$ and $Y = Y' \# N$. Since $(M', \Gamma)$ is obtained from $Y_\Sigma'$ by a sequence of product annulus and disk decompositions, $Y'_\Sigma$, seen as a sutured manifold with empty suture, is taut by Lemma \cite[Lemma 3.12]{Ga}. Then there is a taut foliation on $Y'$ with $\Sigma$ as a closed leaf (see \cite[Section 5]{Ga}), and therefore $\Sigma$ minimizes the genus in its class in $H_2(Y')$ by the genus-minimizing property of closed leaves in taut foliations; see Corollary 2 of Section 3 of \cite{Th}.  Finally $\Sigma$ also minimizes the genus in $Y$ because any minimal genus surface $\widetilde{\Sigma}$ in the homology class of $\Sigma$ can be made disjoint from the connected sum sphere by an isotopy because it is incompressible.
\end{proof}

\begin{lemma}
$\widehat{HF}(- Y | \Sigma)) \simeq SFH(-M, -\Gamma) \oplus SFH(-M, -\Gamma)[1]$.
\end{lemma}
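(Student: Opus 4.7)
The strategy is to invoke Lekili's theorem \cite{Le}, which provides the Heegaard Floer counterpart of the fact that $ECH(M,\Gamma,\xi)$ appears as a summand of $\widehat{ECH}(Y,\xi)$ in Theorem \ref{thm: ECH}, and then to account for the factor of two in passing from $HF^+$ to $\widehat{HF}$ via the $U$-map exact triangle.

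First, since the underlying $3$-manifold $Y$ of the contact closure $(Y,\xi)$ coincides with the Kronheimer--Mrowka topological closure of $(M,\Gamma)$, Lekili's result applies and yields an identification of the top Spin$^c$-stratum
\begin{equation*}
HF^+(-Y \mid \Sigma) \simeq SFH(-M,-\Gamma),
\end{equation*}
where the summand on the left is taken over Spin$^c$-structures $\mathfrak{s}$ with $\langle c_1(\mathfrak{s}),[\Sigma]\rangle = \chi(\Sigma)$ on $-Y$.

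Next, I would show that the $U$-map acts as zero on $HF^+(-Y\mid\Sigma)$. If $(M,\Gamma)$ is not taut then $SFH(-M,-\Gamma)=0$, so the previous display forces $HF^+(-Y\mid\Sigma)=0$ and the vanishing is trivial. If $(M,\Gamma)$ is taut then $R_\pm(\Gamma)$ are genus-minimizing in their relative homology classes, so by Lemma \ref{genus minimizing} the surface $\Sigma$ is genus-minimizing in $Y$. The Spin$^c$-structures contributing to $HF^+(-Y\mid\Sigma)$ are precisely those which are extremal for the adjunction inequality along $\Sigma$, and in this extremal case the structural results of Ozsv\'ath--Szab\'o for $HF^+$ at the boundary of the allowed range of Chern numbers force $U$ to vanish.

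With $U=0$, the standard $U$-exact triangle
\begin{equation*}
\widehat{HF}(-Y\mid\Sigma) \to HF^+(-Y\mid\Sigma) \xrightarrow{U} HF^+(-Y\mid\Sigma)
\end{equation*}
collapses to a short exact sequence that splits over the ground field $\F$, yielding
\begin{equation*}
\widehat{HF}(-Y\mid\Sigma) \simeq HF^+(-Y\mid\Sigma) \oplus HF^+(-Y\mid\Sigma)[1] \simeq SFH(-M,-\Gamma) \oplus SFH(-M,-\Gamma)[1].
\end{equation*}
The main technical point is the vanishing of $U$: it requires the genus-minimizing property of $\Sigma$, which in turn requires tautness of $(M,\Gamma)$ via Lemma \ref{genus minimizing}, so the non-taut case must be separated out in order to invoke that lemma validly. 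Everything else is a combination of Lekili's identification with standard Heegaard Floer machinery.
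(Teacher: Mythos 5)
Your proof takes essentially the same route as the paper: invoke Lekili's identification $HF^+(-Y\mid\Sigma)\simeq SFH(-M,-\Gamma)$, establish that $U=0$ in the relevant extremal Spin$^c$-structures, and conclude via the $U$-exact triangle. The paper even cites Lekili's Corollary 20 as the packaged statement of the last two steps, so your unrolling of the exact-triangle argument is just a more explicit version of the same thing.

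There is, however, a small logical reordering that creates a gap. You apply Lekili's theorem unconditionally to get the display $HF^+(-Y\mid\Sigma)\simeq SFH(-M,-\Gamma)$, and then in the non-taut case you use this very display to deduce $HF^+(-Y\mid\Sigma)=0$ from $SFH(-M,-\Gamma)=0$. But Lekili's Theorem 24 is applied in the paper only after one knows $\Sigma$ is genus-minimizing, which is the interesting (taut) case; you cannot invoke the theorem and then argue backwards through it in the case where its hypothesis may fail. The fix is what the paper does: in the case where $\Sigma$ is not genus-minimizing (equivalently, by the contrapositive of Lemma~\ref{genus minimizing}, when $R_\pm(\Gamma)$ are not genus-minimizing), show each side is zero \emph{independently} --- $\widehat{HF}(-Y\mid\Sigma)=0$ directly from the adjunction inequality for $\widehat{HF}$, and $SFH(-M,-\Gamma)=0$ from Juh\'asz's results \cite[Propositions 9.15, 9.18]{Ju1}. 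Lekili is then reserved for the genus-minimizing case, where it actually applies. Your citation of ``structural results of Ozsv\'ath--Szab\'o'' for $U=0$ is also a bit vague; the precise reference used in the paper is \cite[Corollary 20]{Le}.
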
 

\begin{proof}
If $\Sigma$ is not genus-minimizing, then $\widehat{HF}(- Y | \Sigma) =0$ by the adjunction inequality \cite[Theorem 1.6]{OSz2}, together with \cite[Proposition 2.1]{OSz2} and \cite[Theorem 2.4]{OSz2}. On the other hand, if $\Sigma$ is not genus-minimizing, then $R_\pm(\Gamma)$ are not genus-minimizing either by Lemma \ref{genus minimizing}. Then $SFH(-M, -\Gamma)=0$ by \cite[Proposition 9.18]{Ju1} and \cite[Proposition 9.15]{Ju1}. This proves the lemma in the trivial case when $\Sigma$ is not genus-minimizing.
  
When $\Sigma$ is genus-minimizing, \cite[Theorem 24]{Le} shows that
\begin{equation}\label{e3}
HF^+(-Y | \Sigma) \simeq SFH(-M,-\Gamma), 
\end{equation}
and moreover by \cite[Corollary 20]{Le}
\begin{equation}\label{e2}
\widehat{HF}(-Y | \Sigma)\simeq HF^+(-Y | \Sigma) \oplus HF^+(-Y | \Sigma)[1],
\end{equation} because the $U$-map is zero when restricted  to Spin$^c $-structures $\mathfrak{s}$ such that $\langle c_1(\mathfrak{s}), [\Sigma] \rangle= \chi(\Sigma)$.
\end{proof}

\begin{proof}[Proof of Theorem \ref{thm: sutured manifold}]
Theorem \ref{thm: sutured manifold} follows from Lemma \ref{lemma: e1}, Equations \eqref{e3} and \eqref{e2}, and Theorem \ref{thm: ECH}.
\end{proof}

In order to prove Theorem \ref{thm: sutured monopole}, we prove the analogue of \cite[Corollary 20]{Le}, i.e., the vanishing of the $U$-map in the relevant Spin$^c$-structures, for monopole Floer homology. The proof of the following lemma was suggested to us by Francesco Lin.

\begin{lemma}\label{vanishing of U in HM}
Let $Y$ be a closed, connected and oriented $3$-manifold and $\Sigma \subset Y$ an embedded closed, connected, oriented surface of genus at least $2$. Then for every Spin$^c$-structure $\mathfrak{s}$ such that $\langle c_1(\mathfrak{s}), [\Sigma] \rangle = \chi(\Sigma)$, the map
$$U \colon \HMt_\bullet(Y, \mathfrak{s}) \to \HMt_\bullet(Y, \mathfrak{s})$$
is trivial.
\end{lemma}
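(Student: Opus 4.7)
The plan is to reduce the monopole vanishing to its Heegaard Floer counterpart (Lekili's Corollary~20, already invoked in the preceding lemma) by transferring across the Kutluhan--Lee--Taubes isomorphism between Heegaard Floer and monopole Floer homologies.

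First, I would observe that the hypothesis $\langle c_1(\mathfrak{s}), [\Sigma]\rangle = \chi(\Sigma) = 2-2g$ with $g \geq 2$ forces $c_1(\mathfrak{s})$ to be non-torsion. Hence the ``bar'' version $\overline{HM}_\bullet(Y,\mathfrak{s})$ vanishes, $\HMt_\bullet(Y,\mathfrak{s})$ is finite-dimensional over $\F$, and $U$ acts nilpotently on it; in particular, the statement ``$U=0$'' is stronger than the qualitative finiteness one gets for free, and needs input from the extremal Spin$^c$ hypothesis.

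Next, I would invoke the Kutluhan--Lee--Taubes isomorphism
\[
\HMt_\bullet(Y,\mathfrak{s}) \simeq HF^+(-Y,\mathfrak{s})
\]
as relatively graded $\F[U]$-modules, respecting the Spin$^c$-decomposition. Combined with Lekili's vanishing of the $U$-action on $HF^+(-Y,\mathfrak{s})$ under the hypothesis $\langle c_1(\mathfrak{s}), [\Sigma]\rangle = \chi(\Sigma)$, this immediately yields the vanishing of $U$ on $\HMt_\bullet(Y,\mathfrak{s})$. The main technical point is verifying that the KLT isomorphism intertwines the two $U$-actions and respects the Spin$^c$-splitting; both compatibilities are established in the KLT series, though they require some bookkeeping across the different models.

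An alternative, more direct route, which I suspect is closer to Lin's suggestion, would exploit the saturation of the Kronheimer--Mrowka adjunction inequality together with their surface-decomposition technology in \cite{KMbook} to pin down $\HMt_\bullet(Y,\mathfrak{s})$ in a range of relative gradings too narrow to admit a nonzero $U$-action of degree $-2$. The hardest step in this route would be identifying the precise grading support of $\HMt_\bullet(Y,\mathfrak{s})$ in the saturating case; once that structural input is available, the conclusion $U=0$ would be formal.
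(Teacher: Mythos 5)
Your main route is logically sound but genuinely different from the paper's. You reduce the monopole statement to Lekili's vanishing of $U$ on $HF^+$ (in the extremal Spin$^c$-structures) by transferring across the Kutluhan--Lee--Taubes isomorphism $\HMt_\bullet(Y,\mathfrak{s}) \simeq HF^+(-Y,\mathfrak{s})$, which is indeed an isomorphism of $\F[U]$-modules respecting Spin$^c$-decompositions. The paper instead gives an argument internal to monopole Floer homology (suggested by F.~Lin), which has the advantage of keeping the proof of Theorem~\ref{thm: sutured monopole} independent of the entire HF$=$HM program; with your route, the monopole statement would essentially be deduced from the Heegaard Floer statement, negating the point of giving a second, parallel proof via Taubes' ECH$=$HM. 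The paper's argument: (i) reduce to $\widehat{\mathit{HM}}$ because $\mathfrak{s}$ is non-torsion (you also note this); (ii) establish the model case $Y=S^1\times\Sigma$, where $\widehat{\mathit{HM}}(S^1\times\Sigma \mid \Sigma)\simeq\F$ by \cite[Lemma~2.2]{KM}, so $U$ vanishes trivially; (iii) run a cobordism argument splitting the product cobordism $[-1,1]\times Y$ into a cobordism $W_0$ from $Y$ to $Y\#(S^1\times\Sigma)$ followed by its complement $W_1$, so that $\widehat{\mathit{HM}}(W_1)\circ\widehat{\mathit{HM}}(W_0)=\mathrm{Id}$ and it suffices to show $U=0$ on $\widehat{\mathit{HM}}(Y\#(S^1\times\Sigma),\mathfrak{s}_\#)$; (iv) conclude via the Bloom--Mrowka--Ozsv\'ath connected sum formula \cite[Theorem~5]{Lin} and the model case. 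Your second, ``more direct'' sketch (adjunction saturation pinning down the grading support) is closer in spirit to the model case~(ii), but you leave the key step unresolved and it is not what the paper does for a general $Y$; the cobordism-plus-connected-sum step is the new idea you would be missing if you tried to push that route through.
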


\begin{proof}
Since $\mathfrak{s}$ is nontorsion, there is an isomorphism
$$\HMt_\bullet(Y, \mathfrak{s}) \simeq \widehat{\mathit{HM}}_\bullet(Y, \mathfrak{s}),$$
because $\overline{\mathit{HM}}_\bullet(Y, \mathfrak{s})=0$, as its definition only involves reducible solutions. Then it suffices to prove that the map
$$U \colon \widehat{\mathit{HM}}_\bullet(Y, \mathfrak{s}) \to \widehat{\mathit{HM}}_\bullet(Y, \mathfrak{s})$$ 
is trivial.

First we consider the case $Y= S^1\times \Sigma$. By \cite[Lemma 2.2]{KM},
$$\widehat{\mathit{HM}}(S^1\times \Sigma | \Sigma) \simeq \F,$$
and therefore $U$ is trivial on $\widehat{\mathit{HM}}_\bullet(S^1\times \Sigma, \mathfrak{s})$ for every $\mathfrak{s}$ such that $\langle c_1(\mathfrak{s}), [\Sigma] \rangle = \chi(\Sigma)$. We treat the general case by a cobordism argument. Take $W= [-1,1] \times Y$. Then the map
$$\widehat{\mathit{HM}}(W) \colon \widehat{\mathit{HM}}_\bullet(Y, \mathfrak{s}) \to  \widehat{\mathit{HM}}_\bullet(Y, \mathfrak{s})$$
is the identity. Now let $W_0\subset W$ be the union of (i) a closed tubular neighborhood of $\{ 0 \} \times \Sigma$ contained in $[- \frac 13, \frac 13] \times Y$; (ii) $[-1, - \frac 23] \times Y$; and (iii) a tube (i.e., a neighborhood of an arc) connecting them. Then $W_0$ is a cobordism from $Y$ to $Y \# (S^1 \times \Sigma)$ and $W_1:= W \setminus \op{int}(W_0)$ is a cobordism from $Y \# (S^1 \times \Sigma)$ to $Y$, and moreover $\widehat{\mathit{HM}}(W_1) \circ \widehat{\mathit{HM}}(W_0)= \widehat{\mathit{HM}}(W)= \op{Id}$. Since $U$ commutes with the cobordism maps, to prove the lemma it suffices to prove that $U$ vanishes on $\widehat{\mathit{HM}}(Y \#(S^1 \times \Sigma), \mathfrak{s}_{\#})$, where $\mathfrak{s}_{\#}$ is the restriction to $Y \#(S^1 \times \Sigma)$ of the Spin$^c$-structure on $W$ induced by $\mathfrak{s}$. If the connected sum is performed along balls that do not intersect $\Sigma' = \{ \theta \} \times \Sigma \subset S^1 \times \Sigma$, then $\langle c_1(\mathfrak{s}_{\#}), [\Sigma'] \rangle = \chi(\Sigma')$, and the vanishing of $U$ on $\widehat{\mathit{HM}}(Y \#(S^1 \times \Sigma), \mathfrak{s}_{\#})$ follows from Bloom, Mrowka and Ozsv\'ath's connected sum formula (see \cite[Theorem 5]{Lin}) and the vanishing of $U$ on $\widehat{\mathit{HM}}_\bullet(S^1 \times \Sigma, \mathfrak{s}_\#|_{S^1 \times \Sigma})$.
\end{proof}

\begin{proof}[Proof of Theorem \ref{thm: sutured monopole}]
By the definition of sutured monopole Floer homology \cite[Definition 4.3]{KM},
\begin{equation}\label{eqn: definition of SHM}
    SHM(-M, - \Gamma) \simeq \HMt_\bullet(-Y | \Sigma).
\end{equation}
Let $\widetilde{\mathit{HM}}_\bullet(-Y|\Sigma)$ be the cone of $U$-map in $\HMt_\bullet(-Y | \Sigma)$. Then  by Equation \eqref{eqn: definition of SHM} and Lemma \ref{vanishing of U in HM},
$$\widetilde{\mathit{HM}}_\bullet(-Y| \Sigma) \simeq SHM(-M, - \Gamma) \oplus SHM(-M, - \Gamma)[1].$$
By \cite{T2} there is an isomorphism between $\HMt_\bullet(-Y | \Sigma)$ and $ECH(Y, \xi | \Sigma)$ which commutes with the $U$-maps, and therefore induces an isomorphism $\widetilde{\mathit{HM}}_\bullet(-Y | \Sigma) \simeq \widehat{ECH}(Y, \xi | \Sigma)$. The theorem then follows from Theorem \ref{thm: ECH}.
\end{proof}

\section{ECH of the contact closure}\label{sec: proof of the main technical theorem}

\subsection{Construction of a Reeb vector field}\label{ssec: Reeb}

The proof of Theorem \ref{thm: ECH} relies in large part on a careful construction of a Reeb vector field, which is given in this subsection.  We decompose $Y=Y'\cup Y''$, where 
$$Y''=S\times S^1 =S\times ([-1,1]/-1 \sim 1) \quad \mbox{and} \quad Y' = M/(R_+ \stackrel{\psi} \sim R_-).$$ 
The submanifolds $Y'$ and $Y''$ are glued along their torus boundary. The Reeb vector field is constructed in three steps: first we modify the contact form on $Y'$ near the boundary to introduce a ``buffer zone'' which will restrict holomorphic curves from going between $Y'$ and  $Y''$, then we construct a contact form on $Y''$ whose Reeb vector field is Morse-Bott and easy to understand, and finally we perturb the Reeb vector field to make the relevant Reeb orbits nondegenerate.

\subsubsection{The buffer zone}  \label{subsubsection: the buffer zone}

Let $\lambda$ be a contact form on $Y'$, obtained by gluing a contact form adapted to $(M, \Gamma)$. The goal of Section~\ref{subsubsection: the buffer zone} is to make a particular modification to $(Y', \lambda)$ on a collar neighborhood $N\subset Y'$ of $\partial Y'$, which we refer to as ``installing a buffer zone".  

Let $N:=[-1, 1]_s\times T^2_{\phi,t} \subset Y'$ be a collar neighborhood of $\partial Y' = \{ s=1\}$ such that $\phi$ is the coordinate in the  $\Gamma$-direction and $t$ is still the coordinate in the fiber direction. Without loss of generality, we may assume that $\lambda|_{N}=e^s d \phi+ dt$. Choose $\epsilon>0$ small.  On $N$ we consider a contact form $\lambda_0|_N$ of the form 
$$\lambda_0|_N= a(s,t)d\phi +b(s)dt,$$
whose Reeb vector field $R_{\lambda_0|_N}$ is parallel to $-\tfrac{\bdry b}{\bdry s} \partial_\phi+\tfrac{\bdry a}{\bdry s} \partial_t   - \tfrac{\bdry a}{\bdry t} \partial_s$.  Here $a$ and $b$ are chosen such that:
\be
\item[(C1)] The contact condition $b\tfrac{\bdry a}{\bdry s}-a\tfrac{\bdry b}{\bdry s}>0$ holds.  Geometrically this means that along the curve $(a(s,t),b(s))$ for fixed $t$, $((\tfrac{\bdry a}{\bdry s}, \tfrac{\bdry b}{\bdry s}),(a,b))$ is an oriented basis.
\item[(C2)] $a(s,t)=e^s$ for $s$ near $\pm 1$ and $a$ does not depend on $t$ when $s\not\in [-\epsilon, \epsilon]$.
\item[(C3)] $b(s)=1$ for $s$ near $1$ and $b(s)=1+\delta$ for $s$ near $-1$ and $\delta>0$ small.
\item[(C4)] On $s \in [-1, -\epsilon]$ (resp.\ $s \in [\epsilon, 1]$),  as $s$ increases, $(\tfrac{\bdry a}{\bdry s},\tfrac{\bdry b}{\bdry s})$ 
rotates in the clockwise (resp.\ counterclockwise) direction from horizontal to nearly vertical (resp.\ nearly vertical to horizontal). See Figure~\ref{fig: ab-plane}.
\begin{figure}[ht]
\begin{overpic}[width=4cm]{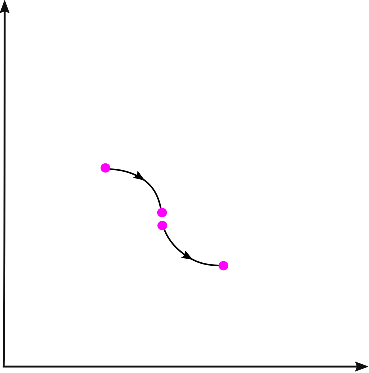}
\put(50,-7){$a$}\put(-10,60){$b$}
\put(8,58){\tiny $(a(-1),b(-1))$} \put(62,28){\tiny $(a(1),b(1))$}
\put(45,45){\tiny $(a(-\epsilon),b(-\epsilon))$} \put(10,35){\tiny $(a(\epsilon),b(\epsilon))$}
\end{overpic}
\caption{The curve $(a(s),b(s))$ on $[-1,-\epsilon]$ and $[\epsilon,1]$.}
\label{fig: ab-plane}
\end{figure}

\item[(C5)] On $[- \epsilon, \epsilon] \times S^1_t$, $a$ is a Morse function $C^1$-close to $1$, with two index one critical points $h_{+,0}$ and $h_{-,0}$, a local maximum $e_{+,0}$, and a local minimum $e_{-,0}$, and whose level sets are drawn in Figure \ref{buffer}, and $b$ satisfies $\tfrac{\bdry b}{\bdry s}<0$. 
\ee
\begin{figure}[ht]
\begin{overpic}[width=4cm]{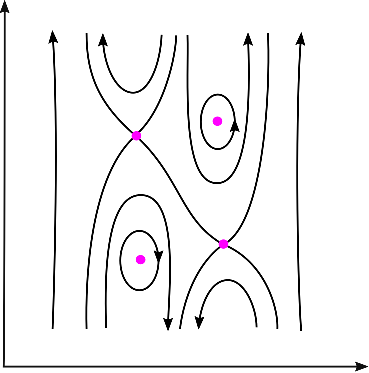}
\put(50,-7){$s$}\put(-10,60){$t$}
\put(20,61){\tiny${h_{+,0}}$} \put(30,19){\tiny${e_{+,0}}$} \put(66,33){\tiny${h_{-,0}}$} \put(53,77){\tiny${e_{-,0}}$}
\end{overpic}
\caption{The buffer zone for $s\in[-\epsilon,\epsilon]$. The level sets of the Morse function $a$ are oriented by the projection of the Reeb vector field to the $(t,s)$-annulus.}
\label{buffer}
\end{figure}

We define the contact form $\lambda_0$ on $Y'$ such that $\lambda_0|_{Y' \setminus N}=(1+\delta)\lambda|_{Y' \setminus N}$ and on $N$ agrees with $\lambda_0|_N$ constructed above. We will refer to $(N,\lambda_0|_N)$ as the {\em buffer zone}.

Next we describe the dynamics of the Reeb vector field $R_{\lambda_0|_N}$ on the buffer zone. Write $Y= \tfrac{\bdry a}{\bdry s} \partial_t   - \tfrac{\bdry a}{\bdry t} \partial_s$ for the $st$-component of $R_{\lambda_0|_N}$. 

\begin{rmk}\label{rmk: tangent to level sets}
Since $da(R_{\lambda_0|_N})=da(Y)=0$, $R_{\lambda_0|_N}$ is tangent to the level sets of $a$.  
\end{rmk}

On $[-\epsilon, \epsilon]\times S^1_t$, the function $a$ has a minimum, a maximum, and two saddle points. By (C5) and Figure~\ref{buffer}, each saddle has a homoclinic connection that gives a closed level line which is positively transverse to $\{t=const\}$ away from the zero of $Y$, and there are two heteroclinic connections between the two saddle points that together form a closed level line which is negatively transverse to $\{t=const\}$ away from the zeros of $Y$. Thus there are $4$ horizontal orbits $e_{-,0}$, $h_{-,0}$, $h_{+,0}$, and $e_{+,0}$ corresponding to the critical points of $a$, where $e_{-,0}$ (resp.\ $e_{+,0}$) corresponds to the minimum (resp.\ maximum) of $a$, and no orbit has negative algebraic intersection with $\{ t=const\}$.

\begin{convention} \label{slope 1}
Given a torus parallel to $T^2_{\phi,t}$ with induced $(\phi,t)$-coor\-dinates, we define the slope of a curve tangent to $q\partial_\phi +p\partial_t$ (or isotopic to such a curve) to be $(q,p)$.
\end{convention}

\begin{lemma}\label{Morse Bott tori in the buffer zone}
There are two families of Morse-Bott tori of slope $(n,1)$ accumulating to the suspension of each homoclinic orbit of $Y$, and no other orbits of the same slope. 
When $s<0$ (resp.\ $s>0$) the Morse-Bott tori are positive (resp.\ negative).
\end{lemma}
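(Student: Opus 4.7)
The plan is to reduce the classification of slope $(n,1)$ closed Reeb orbits of $R_{\lambda_0|_N}$ to a planar analysis on the $(s,t)$-annulus. By Remark~\ref{rmk: tangent to level sets} and the $S^1_\phi$-invariance of $R_{\lambda_0|_N}$, every closed Reeb orbit projects to a closed trajectory of $Y$ on a single level $\{a=c\}$, and a slope $(n,1)$ orbit projects to a closed level curve with $t$-winding $+1$.

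The first step is to locate all closed level curves of $a$ with $t$-winding $+1$. Using the level structure described in Section~\ref{subsubsection: the buffer zone} and Figure~\ref{buffer}---two homoclinics positively transverse to $\{t=\mathrm{const}\}$, a single heteroclinic closed curve negatively transverse to $\{t=\mathrm{const}\}$, and contractible loops around $e_{\pm,0}$---only the two homoclinics themselves and smooth level curves of $a$ lying in small collar neighborhoods of them have this property. On each of the two sides of each homoclinic one gets precisely one such one-parameter family, yielding four families in total; this already takes care of the ``no other orbits of the same slope'' clause of the lemma.

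For a level curve $\gamma_c$ in one of these families, its $S^1_\phi$-orbit is a Reeb-invariant torus $T_c$. Using the Reeb formula $R_{\lambda_0|_N} \propto -\partial_s b\,\partial_\phi + \partial_s a\,\partial_t - \partial_t a\,\partial_s$, the $\phi$-advance of a Reeb flow line over one revolution of $\gamma_c$ is
\[
\Delta\phi(c) \;=\; \oint_{\gamma_c} \frac{-\partial_s b}{\partial_s a}\, dt,
\]
so $T_c$ is foliated by closed Reeb orbits of slope $(n,1)$ exactly when $\Delta\phi(c) = 2\pi n$. By (C5), $-\partial_s b > 0$ throughout the buffer zone, and $\partial_s a$ has definite sign along $\gamma_c$ near the saddle by the positive $t$-transversality of the homoclinic; combined with the logarithmic blow-up of the transit time of $Y$ near the hyperbolic saddle, this forces $|\Delta\phi(c)| \to \infty$ as $c$ approaches the critical value. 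A short local computation at the saddle establishes monotonicity of $\Delta\phi$ along each family, so by the intermediate value theorem there is, for every sufficiently large $|n|$, a unique $c(n)$ in each family with $\Delta\phi(c(n))=2\pi n$; the same monotonicity ensures that the induced torus $T_{c(n)}$ is Morse-Bott and that the $T_{c(n)}$ accumulate onto the suspension of the homoclinic as $|n|\to\infty$.

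The final step, determining the sign of each Morse-Bott torus, is the one I expect to be the main obstacle. It amounts to computing the linearized return map transverse to $T_{c(n)}$ and relating its rotation number to the direction in which $(\partial_s a, \partial_s b)$ rotates in the $ab$-plane as $s$ varies, i.e.\ to condition (C4). The expectation, which the buffer-zone construction is designed to produce, is that the clockwise rotation on $s<0$ gives positive Morse-Bott tori and the counterclockwise rotation on $s>0$ gives negative ones; the careful matching of these orientation conventions is the delicate content of the lemma beyond the planar existence argument.
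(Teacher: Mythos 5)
Your overall framework is in the right spirit and matches the paper's first step: Remark~\ref{rmk: tangent to level sets} reduces everything to closed level curves of $a$ in the $(s,t)$-annulus, a slope-$(n,1)$ orbit comes from a level curve of $t$-winding $+1$, and the $\phi$-advance formula together with the logarithmic blow-up of transit time near the saddle is a correct way to see that the slope diverges along a family approaching a homoclinic. There are, however, two genuine gaps.

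The main one is the enumeration of families. You assert four families (one on each side of each homoclinic), but there are only two, one per homoclinic, and they are \emph{not} confined to collar neighborhoods of the homoclinics: each family is the torus foliation of the entire region between $\{\mp 1\}\times T^2$ and the singular torus $\mathcal{T}_\pm$. For $|s|>\epsilon$ the level sets of $a$ are simply $\{s=\mathrm{const}\}$ (since $a$ is $t$-independent there by (C2)); these are the low-$n$ members of the family, and the curves in the collar of the homoclinic are the high-$n$ members. On the other side of each homoclinic the nearby regular level curves are the contractible circles around $e_{\pm,0}$, with $t$-winding $0$, and contribute nothing of slope $(n,1)$. Lemma~\ref{lemma: list of orbits} is the definitive check: it records exactly one pair $(e_{+,1/n},h_{+,1/n})$ in $(-1,0)\times T^2$ and exactly one pair $(e_{-,1/n},h_{-,1/n})$ in $(0,1)\times T^2$ for each $n\in\Z_{>0}$, not two pairs per side and no negative $n$.

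The second gap is that your argument only produces $(n,1)$ tori for $|n|$ large and leaves the sign and nondegeneracy of the Morse-Bott family open (you flag this yourself as ``the main obstacle''). The paper gets all of this from (C4) in one stroke: (C4) is engineered so that, as $s$ increases across each family, the direction of $R_{\lambda_0|_N}$ rotates monotonically with nonvanishing derivative (clockwise for $s<0$, counterclockwise for $s>0$), all the way from $(0,1)$ at $s=\mp 1$ to $(1,0)$ at $\mathcal{T}_\pm$. Monotonicity with nonzero derivative immediately gives existence and uniqueness of the $(n,1)$ torus for \emph{every} $n\geq 1$, the Morse-Bott nondegeneracy of each $S^1$-family, and the sign. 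Your local analysis at the saddle cannot by itself recover the low-$n$ part of the family, which lives in the $|s|>\epsilon$ region you set aside, and it does not address the sign at all, whereas in the paper that is read off directly from the direction of rotation in (C4).
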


\begin{proof}
By Remark~\ref{rmk: tangent to level sets}, $R_{\lambda_0|_N}$ is tangent to the level sets of $a$, viewed as a function on $N$. The closures of the homoclinic trajectories of $h_{+,0}$ and $h_{-,0}$ times $S^1_\phi$ are singular tori $\mathcal{T}_+$ and $\mathcal{T}_-$ that are tangent to $R_{\lambda_0|_N}$.  By (C4), the region between $\{-1\}\times T^2$ and $\mathcal{T}_+$ is foliated by tori, each of which is foliated by $R_{\lambda_0|_N}$ so that the slope rotates clockwise with positive derivative as $s$ increases. Symmetrically, the region between $\mathcal{T}_-$ and $\{1\}\times T^2$ is foliated by tori and the slope induced by $R_{\lambda_0|_N}$ rotates counterclockwise as $s$ increases. Moreover, whenever such a slope is rational, the foliation given by $R_{\lambda_0|_N}$ has an $S^1$-family of (closed) orbits, and the $S^1$-family is Morse-Bott by (C4). The case of slope $(n,1)$ is a special case. 
\end{proof}

The lemma, informally speaking, says that the Reeb vector field in the buffer zone makes (up to a perturbation) a {\em windshield wiper} movement from vertical to horizontal and then to vertical again.

\begin{rmk} \label{rmk: arbitrarily long orbits}
There can be very long orbits that wind around the two horizontal elliptic orbits $e_{-,0}$ and $e_{+,0}$ and have zero intersection number with $\Sigma$. They can be excluded from the ECH chain complex of $Y$ by an easy direct limit argument applied to a sequence of contact forms that do not have horizontal orbits of action $\leq L$ as $L\to \infty$, besides multiples of $e_{-,0},h_{-,0},e_{+,0},h_{+,0}$.
\end{rmk}

\subsubsection{A Morse-Bott contact form on $Y$}

Let $S$ be a compact oriented surface of genus $g\geq 3$ with connected boundary. We pick a closed disk $D_1 \subset S$ and a larger closed disk $D_2$ such that $D_1 \subset \op{int}(D_2)$.  Let $(r,\theta)$ be polar coordinates on $D_2$ so that $D_i=\{r\leq i\}$ for $i=1,2$. We define a contact form
\begin{equation}\label{contact form in the ot zone}
  \lambda_0= g(r)dt+h(r)d \theta
\end{equation}
on $D_2 \times S^1_t$, where $g,h:[0,2]\to \R$ satisfy:
\begin{itemize}
\item $g h'- g' h>0$ (contact condition; the Reeb vector field $R_{\lambda_0}$ is parallel to $h' \partial_t - g' \partial_\theta$);
\item $(g,h)$  makes less than a $\pi$-rotation in a counterclockwise manner from $(g(0),h(0))=(-1,0)$ to $(g(2),h(2))$ with $g(r)=1$ near $r=2$ and $h(2)<0$;
\item $(g'(0),h'(0))=(0,-1)$ and $(g'(2),h'(2))=(0,1)$ ($R_{\lambda_0}$ is vertical at $\{r=0,2\}$); 
\item $(g'(1),h'(1))=(1,0)$ ($R_\lambda$ is horizontal at $\{r=1\}$);
\item $h'g'' - g' h''>0$ (Morse-Bott condition; hence $R_{\lambda_0}$ rotates counterclockwise with nonzero derivative in the basis $(\partial_\theta, \partial_t)$).
\end{itemize}

Then we choose a one-form $\beta$ on $S \setminus \op{int}(D_2)$  and a Morse function $f \colon  S\setminus \operatorname{int}(D_2) \to \R$ close to $1$ such that
\begin{itemize}
\item $d \beta$ is an area form;
\item the Liouville vector field of $\beta$ points into $S$ along $\partial S$ and into $D_2$ along $\partial D_2$;
\item $f$ has a Morse-Bott minimum along $\partial S$;
\item $f$ has $2g$ index one critical points in the interior of $S\setminus \operatorname{int}(D_2)$;
\item $f$ has a Morse-Bott maximum along $\partial D_2$; and 
\item the contact form $f dt + \beta$ agrees with the contact form given by Equation~\eqref{contact form in the ot zone} and the contact form $\lambda_0$ on $Y'$.
\end{itemize}

Then we define $\lambda_0$ on $Y$ by $fdt + \beta$ on $(S \setminus \op{int}(D_2)) \times S^1$, by Equation~\eqref{contact form in the ot zone} on $D_2 \times S^1$ and by $\lambda_0$ on $Y'$.

\begin{convention} \label{slope 2}
Given a torus parallel to $\partial D_2 \times S^1$ with induced $(\theta,t)$-coor\-dinates, we define the slope of a curve tangent to $q\partial_\theta+p\partial_t$ (or isotopic to such a curve) to be $(-q,p)$.
\end{convention}

\subsubsection{Morse-Bott perturbations and excavating the ball}

Fix an unbounded, monotonically increasing sequence of positive real numbers $L_i$, $i \ge 1$, such that $L_i$ is not the period of an orbit of $R_{\lambda_0}$ and fix a sequence of small functions $f_i \colon Y \to \R$ that perturb all the Morse-Bott tori of period less than $L_i$ and slope $(n,1)$ or $(\pm 1, 0)$ (computed with respect to Conventions \ref{slope 1} and \ref{slope 2}) contained in $N\cup (D_2 \setminus \op{int}(D_1)) \times S^1$ into an elliptic-hyperbolic pair of nondegenerate orbits as in \cite[Section 4]{CGH0} (see also \cite{bourgeois:thesis}), and leaving the nondegenerate orbits of period less than $L_i$ unchanged. Then the Reeb vector field $R_{f_i\lambda_0}$ of $f_i\lambda_0$ has two nondegenerate orbits, one elliptic and one hyperbolic, for every Morse-Bott torus of $R_{\lambda_0}$ of period less than $L_i$.

We additionally assume that each $L_i$ is larger than the period of the simple Reeb orbits foliating $\partial D_1 \times S^1$, and that the perturbed orbits $e_0$ and $h_0$ (where $e_0$ is elliptic and $h_0$ is hyperbolic as usual) are supported on $\partial D_1 \times \{ 0 \}$ and $\partial D_1 \times \{ \frac 12 \}$, respectively. The open disk $\op{int}(D_1) \times \{ 0 \}$ is negatively transverse to the flow of $R_{f_i \lambda_0}$ for every $i$, and therefore we can take a sequence\footnote{We cannot choose the ball once and for all $i$ because the support of the perturbations $f_i$ near $\partial D_1 \times S^1$ must shrink as $i$ increases.} of closed balls $B_i$ with concave corners such that $B_{i+1} \subset \op{int}(B_i)$ as follows: we  choose a small solid torus neighborhood $N_i(e_0)$ of $e_0$ whose boundary is tangent to the Reeb flow of $f_i \lambda_0$, and define the ball $B_i$ to be the union of $N_i(e_0)$ together with a very small thickening of the disk $D_1 \times \{0\}$.  
See Figure \ref{figure: ball}.
\begin{figure}[ht]
\begin{overpic}[height=4cm]{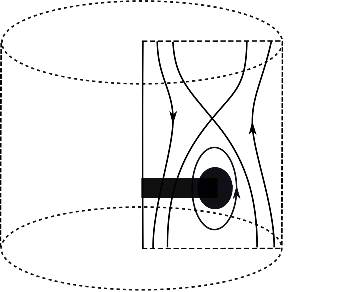}
\put(33.5,29){\tiny $B_i$}
\put(54,50) {\tiny $h_0$}
\end{overpic}
\caption{The concave ball $B_i$ in $D_2\times S^1$, obtained by rotating the shaded region about the vertical central axis.} \label{figure: ball}
\end{figure}

We let $Y_{B_i} := Y\setminus \operatorname{int}(B_i)$ and $\Gamma_{B_i}$ a closed, connected $1$-manifold in $\partial B_i \cap \partial N_i(e_0)$ parallel to $e_0$. Then $(Y_{B_i}, \Gamma_{B_i}, f_i \lambda_0)$ is a sutured contact manifold. We also set $Y''_{B_i}= Y'' \setminus \operatorname{int}(B_i)$.
We make the following observation, which is immediate from the construction:

\begin{claim}\label{claim: no negatively intersecting orbit}
All the Reeb orbits of $f_i\lambda_0$ in $Y_{B_i}$ intersect $\Sigma$ nonnegatively, and the only orbit of $f_i\lambda_0$ in $Y''_{B_i} := Y''\setminus \operatorname{int}(B_i)$ that does not intersect $\Sigma$ is $h_0$.
\end{claim}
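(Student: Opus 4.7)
The plan is to verify both assertions by analyzing the Reeb orbits of $R_{f_i\lambda_0}$ region by region using the explicit construction of Section~\ref{ssec: Reeb}. The intersection number with $\Sigma$ can be read off locally: in $Y''$ and in the buffer zone portion of $Y'$, the surface $\Sigma$ is a constant-$t$ slice in the relevant coordinates, so the sign of intersection is governed by the $\partial_t$-component of the Reeb; in the rest of $Y'$, $\Sigma$ coincides with $R_+ = R_-$ and each crossing contributes $+1$ because $R_\lambda$ positively exits $R_+$ by the adapted condition. I would also observe first that every Reeb orbit is contained entirely in $Y'$ or in $Y''$: along the interface $\partial Y' = \partial Y''$, which corresponds to $\{s = 1\}$ in the buffer coordinates, conditions (C2) and (C3) give $a = e^s$ and $b = 1$, so $R = e\,\partial_t$ is tangent to the interface; on the slabs $[-1,-\epsilon]_s$ and $[\epsilon,1]_s$ the Reeb has vanishing $\partial_s$-component (since $a$ depends only on $s$), so orbits stay on tori $\{s = s_0\}$; and in the Morse slab orbits are confined by being tangent to level sets of $a$.

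On $Y''$ the analysis splits into two pieces. On $(S\setminus\operatorname{int}(D_2))\times S^1$ the Reeb of $fdt + \beta$ is $(f + \beta(X_f))^{-1}(\partial_t + X_f)$ with $X_f$ the $d\beta$-Hamiltonian vector field of $f$, hence has positive $\partial_t$-component, so its closed orbits (sitting above critical points of $f$, with the Morse-Bott families along $\partial S$ and $\partial D_2$ perturbed into $e$-$h$ pairs) meet $\Sigma\cap Y''$ once positively per period. On $D_2\times S^1$ the Reeb $h'(r)\partial_t - g'(r)\partial_\theta$ is tangent to tori $\{r = r_0\}$; using $(g'(0),h'(0)) = (0,-1)$, $(g'(1),h'(1)) = (1,0)$, $(g'(2),h'(2)) = (0,1)$ I split into cases: $r_0 > 1$ gives $h'(r_0) > 0$ and positive intersection; $r_0 = 1$ gives the horizontal Morse-Bott circle whose perturbation produces $e_0 \subset N_i(e_0) \subset B_i$ and $h_0 \subset Y''_{B_i}$ with zero intersection; $r_0 < 1$ gives $h'(r_0) < 0$, so orbits wind in the $-t$ direction, necessarily pass through every slice $\{t = t_0\}$ on the torus, in particular through the thickening of $D_1 \times \{0\}$ contained in $B_i$, and hence are not contained in $Y_{B_i}$.

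On $Y'$, outside the buffer zone $\lambda_0 = (1+\delta)\lambda$ with $\lambda$ adapted to $(M,\Gamma)$, so every crossing of $R_+ = R_-$ is a positive intersection with $\Sigma$, while orbits confined to $\operatorname{int}(M) \setminus \overline{N}$ that avoid $R_+$ simply contribute zero. On the buffer slabs $[-1,-\epsilon]_s$ and $[\epsilon,1]_s$, condition (C4) gives $\partial a/\partial s \geq 0$ (strict on the interior), so the Reeb has nonnegative $\partial_t$-component; the Morse-Bott tori of Lemma~\ref{Morse Bott tori in the buffer zone} of slope $(n,1)$ perturb to $e$-$h$ pairs with positive $\partial_t$-winding. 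In the Morse slab $[-\epsilon,\epsilon]_s$ the statement already established in Section~\ref{ssec: Reeb} --- that no orbit has negative algebraic intersection with $\{t = \mathrm{const}\}$ --- applies directly; the horizontal orbits $e_{\pm,0}$, $h_{\pm,0}$ and any closed orbits encircling the elliptic critical points of $a$ contribute zero intersection, but they all live in $Y'$, not $Y''_{B_i}$.

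The main delicate point is this last sign statement in the Morse slab: although the heteroclinic level curve joining the two saddles is negatively transverse to $\{t = \mathrm{const}\}$, it is singular (passing through both saddle points) and any nearby non-critical level set either closes up around an extremum of $a$ (and therefore has zero $t$-winding) or is an open arc that does not produce a closed $3$-dimensional Reeb orbit, so no orbit with negative $\partial_t$-winding arises. Combining the analyses above, every Reeb orbit of $R_{f_i\lambda_0}$ contained in $Y_{B_i}$ meets $\Sigma$ nonnegatively, and the only such orbit contained in $Y''_{B_i}$ with zero intersection is $h_0$, proving the claim.
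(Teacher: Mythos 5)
The paper states this claim as ``immediate from the construction'' and provides no proof, so there is nothing to compare your argument to directly; what you have done is spell out the verification the authors leave to the reader, and the case analysis you give is the natural (and, I believe, intended) one. You correctly identify the three mechanisms at work: (i) in $D_2\times S^1$ the Reeb field is tangent to the tori $\{r=r_0\}$ and has $\partial_t$-component of sign $\operatorname{sgn} h'(r_0)$, so the negatively winding orbits at $0\le r_0<1$ necessarily sweep through $D_1\times\{0\}$ and hence are absorbed by the ball $B_i$, while at $r_0>1$ the winding is positive and at $r_0=1$ the perturbed pair is $e_0\subset B_i$ and $h_0$; (ii) on $(S\setminus\operatorname{int}(D_2))\times S^1$ the Reeb of $f\,dt+\beta$ has positive $\partial_t$-component; (iii) on $Y'$, crossings of $\Sigma=R_+\sim R_-$ are positive by the adaptedness condition (C1), and in the buffer slabs $|s|\in[\epsilon,1]$ one has $a_t=0$ and $a_s>0$, hence nonnegative $t$-winding, while in the Morse slab the remark that the Reeb is tangent to level sets of $a$ confines the orbits and the only level curve negatively transverse to $\{t=\mathrm{const}\}$ is the singular heteroclinic one, which carries no closed orbit other than the saddle orbits $h_{\pm,0}$ of zero winding. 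Your preliminary observation that the tori $\{s=1\}$ and $\partial D_2\times S^1$ are invariant is also correct and is what makes the region-by-region analysis legitimate.

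Two small points. First, you write that near the heteroclinic level the regular level sets ``either close up around an extremum of $a$... or [are] open arc[s]'': on the annulus $[-\epsilon,\epsilon]_s\times S^1_t$ the regular level sets are in fact closed curves (there are no boundary-hitting arcs since $a=e^{\pm\epsilon}$ is constant on $\{s=\pm\epsilon\}$); the correct dichotomy is that nearby regular levels are either nullhomotopic (zero $t$-winding) or are the homoclinic-type circles with positive $t$-winding, so the conclusion stands. Second, your argument, like the claim itself, is really about the controlled orbits; the ``uncontrollable very long orbits'' produced by the Morse--Bott perturbation (which the paper flags in the remark immediately following the claim, noting they do not survive the action-filtered direct limit) are not pinned down by the torus-by-torus slope analysis. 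This is not a defect of your write-up relative to the paper, since the paper itself delegates that issue to the direct-limit argument, but it is worth being aware that the second sentence of the claim should be read with that remark in mind.
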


Let $ECC^{<L_i}(Y_{B_i}, \Gamma_{B_i}, f_i \lambda_0|\Sigma)$ be the ECH chain complex generated by orbit sets  of total action less that $L_i$ which intersect $\Sigma$ once algebraically, and let  $ECH^{<L_i}(Y_{B_i}, \Gamma_{B_i}, f_i \lambda_0|\Sigma)$ be its homology. By Morse-Bott theory there are canonical inclusions
$$ECC^{<L_i}(Y_{B_i}, \Gamma_{B_i}, f_i \lambda_0 | \Sigma) \hookrightarrow ECC^{<L_j}(Y_{B_j}, \Gamma_{B_i}, f_j \lambda_0 | \Sigma)$$
for $j>i$; see  \cite{CGH0, Yao1, Yao2}. 
We observe that the Morse-Bott correspondence of \cite{Yao1} applies to Morse-Bott cascades of planar holomorphic curves, and this hypothesis is satisfied here by \cite{HS1}.

We define 
$$\mathfrak{C}  = \varinjlim ECC^{<L_i}(Y_{B_i}, \Gamma_{B_i}, f_i \lambda_0 | \Sigma)$$
and denote by $H\mathfrak{C}$ its homology. Since homology commutes with direct limits we have
$$\varinjlim ECH^{< L_i}(Y_{B_i}, \Gamma_{B_i}, f_i \lambda_0| \Sigma) \simeq H\mathfrak{C}.$$
\begin{lemma} \label{lemma: direct limit}
  $H\mathfrak{C}$ is isomorphic to $\widehat{ECH}(Y, \xi | \Sigma)$.
\end{lemma}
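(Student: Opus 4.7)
The plan is to identify $H\mathfrak{C}$ with $\widehat{ECH}(Y,\xi|\Sigma)$ by recognizing the direct system $\{ECC^{<L_i}(Y_{B_i},\Gamma_{B_i},f_i\lambda_0|\Sigma)\}$ as a cofinal system computing the action filtration of a fixed sutured ECH chain complex representing $\widehat{ECH}(Y,\xi|\Sigma)$.

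First, I would fix an index $i_0$ and take $B = B_{i_0}$ as a reference ball. By the invariance of sutured ECH under the choice of adapted contact form and of excavated ball \cite[Theorem~10.2.2]{CGH0}, we have the identification
$$\widehat{ECH}(Y,\xi|\Sigma) \simeq ECH(Y_{B_{i_0}},\Gamma_{B_{i_0}},f_{i_0}\lambda_0|\Sigma),$$
and by definition, the right-hand side is the direct limit over action of the action-filtered chain complexes on this fixed sutured contact manifold. Using the Morse-Bott correspondence of \cite{Yao1,Yao2} (applicable to the planar cascades appearing here by \cite{HS1}), this direct limit can equally well be computed using the sequence of nondegenerate perturbations $f_i$ for $i\geq i_0$, in place of a single contact form, since each $f_i$ renders all orbits of action less than $L_i$ nondegenerate.

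Next, I would address the fact that the ball in the definition of $\mathfrak{C}$ also varies with $i$. The key geometric point, which uses Claim~\ref{claim: no negatively intersecting orbit} and the explicit construction of $B_i$, is that the nested difference $B_{i_0}\setminus \mathrm{int}(B_i)$ lies in a neighborhood of $e_0$ (and of the disk $D_1\times\{0\}$) in which the only Reeb orbits of $f_i\lambda_0$ are multiples of $e_0$. Since $e_0$ is contained in $\Sigma$ and satisfies $A\cdot[\Sigma]=0$, orbit sets involving it do not contribute to the slice $A\cdot[\Sigma]=1$. Consequently, on the relevant slice, the canonical inclusion
$$ECC^{<L_i}(Y_{B_i},\Gamma_{B_i},f_i\lambda_0|\Sigma) \hookrightarrow ECC^{<L_i}(Y_{B_{i_0}},\Gamma_{B_{i_0}},f_i\lambda_0|\Sigma)$$
is an equality of $\F$-vector spaces, and the ECH differentials agree by \cite[Proposition~5.20]{CGHH}, which prevents holomorphic curves in the symplectization from running into the boundary of the sutured region. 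Thus the direct system defining $\mathfrak{C}$ coincides, on the $A\cdot[\Sigma]=1$ summand, with the action-filtered direct system for $(Y_{B_{i_0}},\Gamma_{B_{i_0}},f_i\lambda_0)$.

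Combining these two steps, and passing to the direct limit as $i\to\infty$ (so that $L_i\to\infty$), I would conclude
$$H\mathfrak{C} \;\simeq\; \varinjlim_{i\geq i_0} ECH^{<L_i}(Y_{B_{i_0}},\Gamma_{B_{i_0}},f_i\lambda_0|\Sigma) \;\simeq\; ECH(Y_{B_{i_0}},\Gamma_{B_{i_0}},f_{i_0}\lambda_0|\Sigma) \;\simeq\; \widehat{ECH}(Y,\xi|\Sigma).$$
The main obstacle is the compatibility step: one must check that the canonical inclusions in the direct system defining $\mathfrak{C}$ really do intertwine the ECH differentials with the identification afforded by the Morse-Bott correspondence, in spite of the fact that both the perturbation and the excavated ball are varying with $i$. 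This reduces to the geometric claim that the shrinking region $B_{i_0}\setminus\mathrm{int}(B_i)$ contributes no generators to the $A\cdot[\Sigma]=1$ summand and that tailored almost complex structures prevent holomorphic curves from crossing the sutured boundary, both of which are already established in the setup of Section~\ref{ssec: Reeb}.
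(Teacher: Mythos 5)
Your proposal correctly identifies the two main ingredients---the invariance of sutured ECH under change of contact form and the Morse--Bott correspondence---but the way you try to handle the varying balls $B_i$ has a genuine gap, and the paper's actual proof takes a different, cleaner route precisely to avoid it.

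The difficulty is that for $i>i_0$ the contact form $f_i\lambda_0$ is, in general, \emph{not} adapted to the fixed sutured manifold $(Y_{B_{i_0}},\Gamma_{B_{i_0}})$. This is exactly why the paper introduces a \emph{sequence} of nested balls in the first place: the footnote in Section~\ref{ssec: Reeb} says explicitly that the ball must shrink because the support of the perturbations $f_i$ near $\bdry D_1\times S^1$ shrinks as $i$ grows. Consequently, the object $ECC^{<L_i}(Y_{B_{i_0}},\Gamma_{B_{i_0}},f_i\lambda_0|\Sigma)$ that your argument compares against is not a sutured ECH chain complex in the standard sense, and \cite[Proposition~5.20]{CGHH} (which requires a tailored $J$ on a genuine sutured contact manifold) does not apply to it; so the claim that ``the ECH differentials agree'' is not justified. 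Relatedly, the displayed inclusion is written in the wrong direction: since $B_{i}\subset B_{i_0}$ one has $Y_{B_{i_0}}\subset Y_{B_i}$, so generators of the right-hand complex are a priori a subset of those of the left. Finally, the opening step, ``by definition the right-hand side is the direct limit over action,'' implicitly treats $f_{i_0}\lambda_0$ as a globally nondegenerate contact form, but $f_{i_0}$ only kills degeneracies below action $L_{i_0}$.

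The paper sidesteps all of this by fixing a reference sutured manifold $(Y_{B_0},\Gamma_{B_0})$ and choosing diffeomorphisms $\phi_i\colon (Y_{B_0},\Gamma_{B_0})\to(Y_{B_i},\Gamma_{B_i})$ equal to the identity outside a fixed neighborhood of $B_0$. Pulling back, the varying balls become varying adapted contact forms $\lambda_i=\phi_i^*(f_i\lambda_0)$ on the single fixed sutured manifold, so the identification
$$ECH^{<L_i}(Y_{B_i},\Gamma_{B_i},f_i\lambda_0|\Sigma)\simeq ECH^{<L_i}(Y_{B_0},\Gamma_{B_0},\lambda_i|\Sigma)$$
is literally tautological (no need to compare chain complexes across different manifolds or to argue that curves avoid a shell region). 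Then \cite[Lemma~10.2.1, Corollary~3.2.3]{CGH0} are exactly the statements needed to pass to the direct limit over a sequence of adapted forms on a fixed sutured manifold and to conclude $\varinjlim ECH^{<L_i}(Y_{B_0},\Gamma_{B_0},\lambda_i|\Sigma)\simeq ECH(Y_{B_0},\Gamma_{B_0},\xi|\Sigma)$, and \cite[Theorem~10.3.1]{CGH0} gives the final identification with $\widehat{ECH}(Y,\xi|\Sigma)$. If you want to salvage your approach you would have to replace $f_i\lambda_0$ near $\bdry B_{i_0}$ by an adapted form and argue that this changes nothing on the $A\cdot[\Sigma]=1$ slice---but at that point you have reinvented the diffeomorphism trick.
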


\begin{proof}
Fix a reference sutured manifold $(Y_{B_0},\Gamma_{B_0})$, where $B_0$ is a closed ball with concave corners and is a slight enlargement of $B_1$. 
Fix diffeomorphisms $\phi_i \colon (Y_{B_0}, \Gamma_{B_0}) \to (Y_{B_i}, \Gamma_{B_i})$ such that $\phi_i=\op{id}$ outside a fixed small neighborhood of $B_0$ and consider the contact forms $\lambda_i = \phi_i^*(f_i \lambda_0)$. Then
$$ECH^{< L_i}(Y_{B_i}, \Gamma_{B_i}, f_i \lambda_0 | \Sigma) \simeq ECH^{< L_i}(Y_{B_0}, \Gamma_{B_0}, \lambda_i | \Sigma)$$ 
tautologically, and by Lemma 10.2.1 and Corollary 3.2.3 of \cite{CGH0}, we have
$$ECH(Y_{B_0}, \Gamma_{B_0}, \xi | \Sigma) \simeq \varinjlim ECH^{< L_i}(Y_{B_0}, \Gamma_{B_0}, \lambda_i | \Sigma).$$
Hence $ECH(Y_{B_0}, \Gamma_{B_0}, \xi | \Sigma) \simeq H\mathfrak{C}$. Finally 
$$ECH(Y_{B_0}, \Gamma_{B_0}, \xi | \Sigma ) \simeq \widehat{ECH}(Y, \xi | \Sigma)$$ 
by \cite[Theorem 10.3.1]{CGH0}.
\end{proof}

\subsubsection{List of orbits contributing to $\mathfrak{C}$}

We describe a {\em partially defined} trivialization $\tau$ of $\xi$ with respect to which we compute the Conley-Zehnder indices of the orbits. (Here ``partially defined'' means the trivializations do not extend globally to a trivialization of $\xi$.)
\be
\item On $(S\setminus \op{int}(D_2))\times S^1$, $\tau$ comes from the fibration: More specifically, let $\tau'$ be a trivialization of $T(S\setminus \op{int}(D_2))$. Then let $\tau$ be the pullback of $\tau'$ to $\xi$ on $(S\setminus \op{int}(D_2))\times S^1$.
\item On $(\op{int} (D_2)\setminus \op{int} (D_{1/2}))\times S^1$, $\tau$ has first component $-\bdry_r$.
\item On the buffer region $N=[-1,1]_s\times T^2_{\phi,t} \subset Y'$ of $\partial Y' = \{ s=+1\}$, $\tau$ has first component $\bdry_s$. 
\ee


Now we describe the orbits in $Y''$ and in the buffer zone $N$ which contribute to the generators of the chain complex $\mathfrak{C}$. We can regard them equivalently as nondegenerate orbits of the Reeb flow of the perturbed contact form $f_i \lambda_0$ for $i$ sufficiently large, or as possibly degenerate orbits of $R_{\lambda_0}$ via the Morse-Bott correspondence.  In computing the Conley-Zehnder index the first point of view will be taken, while for every other aspect, we will switch from one to the other without mention.   Whenever we say ``orbit'' without further specification, this convention has always to be understood.
For the next several pages we encourage the reader to refer to Figure~\ref{figure: S} for a more graphical description.
\begin{figure}[ht]
\begin{overpic}[height=5.5cm]{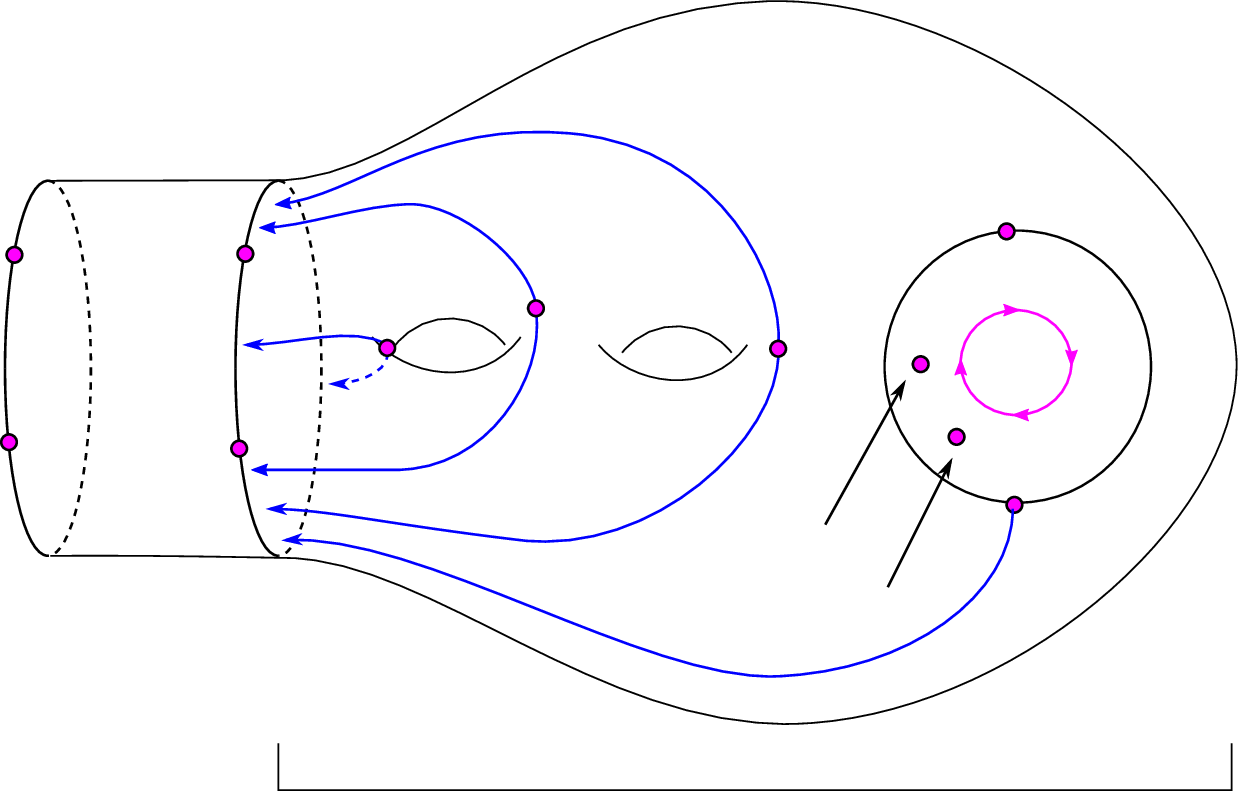}
\put(-5,27.7){\tiny $h_+$}\put(-5,43) {\tiny $e_+$}
\put(14,27.7){\tiny $h_-$}\put(14,43){\tiny $e_-$}
\put(28.4,37.3){\tiny $\delta_1$} \put(44,40){\tiny $\delta_2$}
\put(64,36.7){\tiny $\delta_{2g}$} 
\put(80,47){\tiny $e_2$} \put(82.3,20){\tiny $h_2$}
\put(80,34.2){\tiny $D_1$} \put(90,43){\tiny $\bdry D_2$}
\put(61,20){\tiny $e_{1/n}$} \put(68.5,14){\tiny $h_{1/n}$}
\put(85,38){\tiny $h_0$}
\put(57,-5){\tiny $S\times\{1/2\}$}
\put(50,40){$\cdots$}
\end{overpic}
\vskip.1in
\caption{The orbits that intersect $S\times\{ 0 \}$, given by pink dots.  All the orbits except for $h_0$ intersect $S\times\{0 \}$ once; the orbits besides $h_0,e_{1/n}, h_{1/n}$ are ``vertical'', i.e., parallel to the $S^1$-fibers; the orbit $h_0$ in pink lies on $S\times\{1/2\}$ and bounds $D_1\times\{ 0 \}$. The downward gradient trajectories of $f$  are given in blue.} \label{figure: S}
\end{figure}
Summarizing the above construction of $\lambda_0$, we have:

\begin{lemma} \label{lemma: list of orbits}
The following is the list of orbits in $Y''$ which can appear in orbit sets that generate $\mathfrak{C}$, where the Conley-Zehnder indices $\mu_{CZ}$ are computed with respect to $\tau$:
\begin{itemize}
\item Over $\operatorname{int}(S\setminus D_2)$, $2g$ vertical hyperbolic orbits $\delta_1,\dots,\delta_{2g}$ of $\mu_{CZ}=0$, where $g$ is the genus of $S$.
\item Over $\partial D_2$, a vertical (i.e., of slope $(0,1)$) $\mu_{CZ}=1$ elliptic orbit $e_2$ and  a $\mu_{CZ}=0$ hyperbolic orbit $h_2$.
\item  For every  $n\in \Z_{>0}$, a $\mu_{CZ}=1$ elliptic orbit $e_{1/n}$  and a $\mu_{CZ}=0$ hyperbolic orbit $h_{1/n}$ in $(\op{int}(D_2) \setminus D_1) \times S^1$, both of slope $(n,1)$  with respect to $(\theta,t)$-coordinates.
\item Over $\bdry D_1$, a $\mu_{CZ}=0$ hyperbolic orbit $h_0$ of slope $(1,0)$.
\end{itemize}
See Figure~\ref{figure: S}.

The following is the list of orbits in the buffer zone $N=[-1,1] \times T^2$ that intersect $\Sigma$ at most once, where the Conley-Zehnder indices $\mu_{CZ}$ are computed with respect to $\tau$:
\begin{itemize}
\item On $\{1 \} \times T^2 = \partial S \times S^1$, a $\mu_{CZ}=-1$ vertical elliptic orbit $e_-$ and a $\mu_{CZ}=0$ vertical hyperbolic orbit $h_-$.
\item On $\{-1\}\times T^2$, a $\mu_{CZ}=1$ vertical elliptic orbit $e_+$ and a $\mu_{CZ}=0$ vertical hyperbolic orbit $h_+$.
\item  For every $n\in \Z_{>0}$, a $\mu_{CZ}=-1$ elliptic orbit $e_{-,1/n}$ and a $\mu_{CZ}=0$ hyperbolic orbit $h_{-,1/n}$  in $(0,1) \times T^2$, both of slope $(n,1)$ with respect to $(\phi,t)$-coordinates.
\item 
 For every $n\in \Z_{>0}$, a $\mu_{CZ}=1$ elliptic orbit $e_{+,1/n}$ and a $\mu_{CZ}=0$ hyperbolic orbit $h_{+,1/n}$ in $(-1,0) \times T^2$, both of slope  $(n,1)$ with respect to $(\phi,t)$-coordinates.
\item Four horizontal (i.e., of slope $(\pm 1, 0)$) orbits $e_{-,0}$, $h_{-,0}$, $h_{+,0}$, and $e_{+,0}$ of $\mu_{CZ}=-1,0,0,1$, respectively; see Figure \ref{buffer}.
\end{itemize}
\end{lemma}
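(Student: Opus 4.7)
The statement is essentially a detailed bookkeeping result about the construction of $\lambda_0$, so the plan is to enumerate the orbits in each region of $Y$ separately and then compute Conley--Zehnder indices in the trivialization $\tau$ using the standard Morse--Bott index formula.

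First I would handle the region $(S\setminus\op{int}(D_2))\times S^1$ where $\lambda_0=fdt+\beta$. Since $\beta$ is Liouville and $f$ is close to a constant, the Reeb direction is vertical exactly over critical points of $f$; the $2g$ interior saddles thus give the $2g$ vertical hyperbolic orbits $\delta_1,\dots,\delta_{2g}$, while the Morse--Bott maximum of $f$ along $\partial D_2$ gives a Morse--Bott torus of vertical orbits that, after the perturbation $f_i$, becomes the elliptic--hyperbolic pair $\{e_2,h_2\}$. The Morse--Bott minimum along $\partial S$ is identified via the gluing with the vertical torus on $\{-1\}\times T^2\subset N$ and contributes $\{e_+,h_+\}$; the opposite face $\{+1\}\times T^2$ is purely on the buffer side and contributes $\{e_-,h_-\}$.

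Next, for $\lambda_0=g(r)dt+h(r)d\theta$ on $D_2\times S^1$, a direct computation shows that $R_{\lambda_0}$ is proportional to $h'(r)\partial_t-g'(r)\partial_\theta$. The Morse--Bott condition $h'g''-g'h''>0$ forces $(g',h')$ to rotate monotonically counterclockwise from $(0,-1)$ at $r=0$ to $(0,+1)$ at $r=2$, passing through $(1,0)$ at $r=1$. By Convention~\ref{slope 2}, this means that for every $n\in\Z_{>0}$ there is a unique $r\in(1,2)$ at which the slope equals $(n,1)$, giving a Morse--Bott torus whose perturbation produces $\{e_{1/n},h_{1/n}\}$; the slope $(1,0)$ is realized exactly at $r=1$, giving the Morse--Bott torus $\partial D_1\times S^1$ whose perturbation furnishes $h_0$ and the elliptic orbit $e_0$ used to cut out the balls $B_i$. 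The orbits in the buffer zone $N$ are then read off directly from Lemma~\ref{Morse Bott tori in the buffer zone}: the two nested families of Morse--Bott tori of slope $(n,1)$ contribute $\{e_{\pm,1/n},h_{\pm,1/n}\}$ after perturbation, and the four horizontal orbits $e_{\pm,0},h_{\pm,0}$ are the zeros of the projected vector field $Y$ exhibited in Figure~\ref{buffer}. The hypothesis that each orbit intersects $\Sigma$ at most once rules out any torus of slope $(q,p)$ with $|p|>1$.

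Finally I would compute the Conley--Zehnder indices. For each $\delta_i$, the fibration trivialization identifies the linearized return map with the exponential of the Hessian of $f$ at the saddle, so $\mu_{CZ}(\delta_i)=0$. For every orbit born from a Morse--Bott torus, I would apply the standard formula of Bourgeois \cite{bourgeois:thesis} (see also \cite[Section~4]{CGH0}): after the small perturbation, the elliptic orbit has $\mu_{CZ}=\pm 1$ according to the sign of the rotation of the linearized Reeb flow relative to $\tau$, while its hyperbolic partner has $\mu_{CZ}=0$. The main technical step, and the place requiring most care, is this sign bookkeeping: one must verify that the choice of trivialization (first component $-\partial_r$ in $D_2$ and $\partial_s$ in $N$) and the prescribed rotation direction of $(g',h')$ in $D_2$, respectively the clockwise/counterclockwise windshield-wiper motion on either side of the buffer shown in Figure~\ref{fig: ab-plane}, conspire to give the signs stated in the lemma, namely $+1$ for $e_2,e_{1/n},e_+,e_{+,1/n},e_{+,0}$ and $-1$ for $e_-,e_{-,1/n},e_{-,0}$. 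This is a delicate but routine orientation check; once the signs for the elliptic orbits are established, the vanishing of $\mu_{CZ}$ for their hyperbolic partners follows automatically and the lemma is proved.
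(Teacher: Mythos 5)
Your approach is essentially the unpacking of the construction that the paper itself implies by its phrase ``Summarizing the above construction of $\lambda_0$, we have'' (no separate proof is given in the paper), and your enumeration of the orbits and computation of Conley--Zehnder indices via the Morse--Bott correspondence and Lemma~\ref{Morse Bott tori in the buffer zone} is sound. There is, however, one geometric slip you should correct: the interface between $Y'$ and $Y''$ is $\partial Y' = \{s=1\}\times T^2$, so the Morse--Bott minimum of $f$ along $\partial S$ is identified via the gluing with $\{+1\}\times T^2$ (producing $e_-,h_-$), not with $\{-1\}\times T^2$; the torus $\{-1\}\times T^2$ lies in the interior of $Y'$ and produces $e_+,h_+$. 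Your final assignment of orbit names, positions and indices matches the lemma, so the conclusion is unaffected, but as written your description of which side of the buffer corresponds to $\partial S$ is backwards, and if you were to carry out the orientation check you describe at the end, this misidentification would be the most likely place for a sign error to creep in.
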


\begin{rmk}
Recall that a Morse-Bott perturbation also creates uncontrollable very long orbits, but they do not contribute to the direct limit, and therefore do not appear in the generators of $\mathfrak{C}$.
\end{rmk}

\subsection{Proof of Theorem \ref{thm: ECH}}

In this section we will regard the holomorphic curves contributing to the differential of $\mathfrak{C}$ either as $J_i$-holomorphic curves in $\R \times Y_{B_i}$ for an almost complex structure $J_i$ which is tailored to $(Y_{B_i}, \Gamma_0, f_i\lambda_0)$ (see \cite[Section~3.1]{CGHH}) for $i$ sufficiently large, or as Morse-Bott cascades consisting of $J$-holomorphic maps  in $\R \times Y$ for an almost complex structure $J$ adapted to $\lambda_0$, augmented by gradient flow trajectories in the Morse-Bott tori. Since the two types of moduli spaces are in canonical bijection by Morse-Bott theory provided that the almost complex structures $J_i$ are chosen to be suitable perturbations of $J$, we will switch from one point of view to the other without explicit mention, very much as we do for Reeb orbits. For this reason the almost complex structure will usually be omitted from the notation.

\subsubsection{The slope}

Topological constraints on holomorphic curves derived from the positivity of intersections will play a central role in the proof of Theorem \ref{thm: ECH}. Here we develop some basic tools. We start by recalling the standard orientation convention for the transverse intersection of two surfaces in a $3$-manifold.

\begin{convention}
If $C$ and $T$ are transversely intersecting oriented surfaces in an oriented $3$-manifold $Z$ (in that order), then at $x \in C \cap T$, let $(n,v)$ be an oriented basis for $T_xC$, where $n$ is an oriented normal to $T$ and $v \in T_x(C\cap T)$. Then $v$ orients $T_x(C\cap T)$.
\end{convention}

\begin{defn}[Slope $\sl(u,T)$]
Let $u:\dot F\to \R\times Z$ be a nontrivial finite energy holomorphic curve in the symplectization of a contact manifold $Z$, $C$ the projection to $Z$ of the image of $u$, and $T\subset Z$ an oriented $2$-torus which:
\be
\item is foliated by (closed) Reeb orbits;
\item does not contain any orbits that the ends of $u$ limit to; and 
\item has a neighborhood which is foliated by tori which in turn are foliated by Reeb orbits. 
\ee
Then the {\em slope $\sl(u,T)$ of $u$ along $T$} is defined as follows: The projection $C$ has a finite number of singularities and, away from those, is an immersion that is transverse to the Reeb vector field. If $T$ does not contain a singular point of the projection, then $C$ is transverse to $T$ and its intersection $C\cap T$ is an immersed oriented $1$-manifold in $T$. 
Its homology class in $H_1(T;\Z)$ is the slope $\sl(u,T)$ of $u$ along $T$.  If $T$ contains a singular point of the projection, then $\sl(u,T)$ is defined as $\sl(u,T')$ for a sufficiently close parallel torus $T'$.
\end{defn}  


\begin{claim}  \label{claim: positively transverse}
The positivity of intersections immediately implies the following:
  \begin{enumerate} 
  \item if $C$ is the projection to $Z$ of the image of $u$, then $C\cap T$ is positively transverse to the Reeb vector field outside of its singular points,  i.e., $(v,R)$ is an oriented basis, where $v$ orients $T(C\cap T)$; and 
  \item if $T$ is foliated by (closed) Reeb orbits in the homology class $\sigma$, then $$\sl(u,T) \cdot \sigma>0.$$
  \end{enumerate}
\end{claim}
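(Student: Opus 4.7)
The plan is to deduce both assertions from positivity of intersections in the symplectization $\R\times Z$, applied to $\tilde u=(a,u)\colon \dot F\to \R\times Z$ and to the trivial cylinders $\R\times\gamma$ over Reeb orbits $\gamma\subset T$. Each such cylinder is $J$-holomorphic because its tangent plane $\mathrm{span}(\partial_s,R)$ is $J$-invariant (as $J\partial_s=R$), so positivity of intersections in dimension four is available.

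For (1), I would fix $x\in C\cap T$ outside the singular locus of the projection $\R\times Z\to Z$ applied to $\tilde u$, and let $\gamma_x\subset T$ be the Reeb orbit through $x$. By hypothesis (2) in the definition of slope, $u$ does not limit to $\gamma_x$, so $\tilde u$ meets $\R\times\gamma_x$ only at isolated points; the lift $(s_0,x)$ of $x$ is one such point, and after perturbing $\gamma_x$ through a nearby Reeb orbit in the foliation of a parallel torus if necessary we may assume transversality. Positivity of intersections then gives that if $(v_1,v_2)$ is an oriented basis of $T_{(s_0,x)}\tilde u$, the tuple $(v_1,v_2,\partial_s,R)$ is a positively oriented basis of $T_{(s_0,x)}(\R\times Z)$. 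Writing $v_i=a_i\partial_s+w_i$ with $w_i\in T_xZ$ and performing an elementary column reduction, this translates into the statement that $(w_1,w_2,R)$ is positively oriented in $T_xZ$, where $(w_1,w_2)$ is the induced oriented basis of $T_xC$. A direct linear-algebra unpacking of the convention orienting $v\in T_x(C\cap T)$ --- namely that $(n,v)$ is an oriented basis of $T_xC$ with $n\in T_xC$ projecting to a positive normal of $T$ --- then yields that $(v,R)$ is a positively oriented basis of $T_xT$. Concretely, in coordinates where $T=\{z=0\}$ is oriented by $(\partial_x,\partial_y)$ and $R=\partial_y$, the vector $v$ is a positive multiple of $-w_2^z w_1+w_1^z w_2$, whose $\partial_x$-component coincides up to sign with $\det(w_1,w_2,R)>0$.

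For (2), I would represent $\sigma\in H_1(T;\Z)$ by a single Reeb orbit $\gamma_0\subset T$ drawn from the Reeb foliation of $T$ postulated in condition (1) of the definition of slope. The intersection pairing $\sl(u,T)\cdot\sigma$ equals the algebraic intersection number $[C\cap T]\cdot[\gamma_0]$ computed on $T$. By (1), $C\cap T$ is positively transverse to every Reeb orbit of $T$, so every transverse intersection point of $C\cap T$ with $\gamma_0$ contributes $+1$ and the algebraic intersection number is nonnegative. To obtain strict positivity, one observes that positive transversality to the Reeb foliation forces $[C\cap T]\cdot\sigma>0$ as soon as $C\cap T\neq\emptyset$: one may choose the representative $\gamma_0$ of $\sigma$ to pass through a point of $C\cap T$, producing at least one (automatically positive) intersection.

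The main obstacle, such as it is, is purely bookkeeping in part (1): unpacking the orientation convention on $T_x(C\cap T)$ and converting the four-dimensional statement coming from positivity of intersections into the three-dimensional statement about $C$ meeting $T$. The remaining subtleties --- singular points of the projection $\tilde u\to Z$ and points where $C$ is tangent to $R$ --- are excluded by the hypotheses of (1) and form a lower-dimensional locus on $C$, so they have no effect on the homological conclusion (2).
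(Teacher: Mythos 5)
Your plan --- apply positivity of intersections in $\R\times Z$ between $\tilde u$ and the trivial cylinders $\R\times\gamma$ over Reeb orbits in $T$, unpack the resulting four-dimensional orientation statement into the three-dimensional statement (1), and then reduce (2) to (1) by choosing a Reeb orbit of $T$ through a regular point of $C\cap T$ --- is exactly the standard one, and it is what the paper's ``immediately implies'' together with the pointer to Section 5.2 of \cite{CGH0} is shorthand for; the paper itself gives no further detail.

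The one slip is in (1), where you invoke a perturbation ``through a nearby Reeb orbit in the foliation of a parallel torus'' to arrange transversality of $\tilde u$ with $\R\times\gamma_x$ at $(s_0,x)$. This is both unnecessary and not quite coherent: moving $\gamma_x$ onto a parallel torus pulls it off the point $x$ entirely, so the perturbed picture no longer says anything about the tangent space of $C$ at $x$, which is what (1) is about. In fact no perturbation is needed. Both $T_{(s_0,x)}\tilde u$ and $T_{(s_0,x)}(\R\times\gamma_x)=\mathrm{span}(\partial_s,R)$ are $J$-complex lines in the $J$-complex 2-dimensional space $T_{(s_0,x)}(\R\times Z)$, so they are either equal or transverse; if they were equal then $\partial_s$ would lie in $T_{(s_0,x)}\tilde u$, so the projection of $T_{(s_0,x)}\tilde u$ to $T_xZ$ would be $\mathrm{span}(R)$ and have rank one, contradicting that $x$ lies outside the singular locus. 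So transversality is automatic precisely on the set of points where assertion (1) is made. With this correction, the column reduction in (1) and the reduction of (2) to (1) --- under the tacit hypothesis $C\cap T\neq\emptyset$, which you correctly flag and which is consistent with the claim only ever being used contrapositively in the paper to rule out slopes --- are fine.
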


See \cite[Section 5.2]{CGH0} for a more detailed account. One should observe that, while the image of a holomorphic curve is canonically oriented, the torus $T$ is not, and the sign of $\sl(u, T)$ depends on the orientation of $T$. However, the sign of an intersection point also depends on the orientation of $T$, and therefore the claim holds for both orientations.

\begin{rmk}
We usually write the slope with respect to a chosen basis of $H_1(T;\Z)$.  We recall that our convention is the following:
  \begin{itemize}
\item if $T$ is parallel to $\bdry Y'$, then we take $(1,0)=-\bdry S$ and $(0,1)$ to be the fiber $[0,1]/\sim$ (Convention \ref{slope 1}); 
\item if $T$ is parallel to $\bdry D_2 \times S^1$ we take $(1,0)= - \bdry D_2$ and $(0,1)$ to be the fiber $S^1= [0,1]/\sim$ (Convention \ref{slope 2}). 
\end{itemize}
\end{rmk}

\subsubsection{Decomposing the differential}

A generator $\bs{\gamma}$ of $\mathfrak{C}$ splits as $\bs{\gamma} = \bs{\gamma}_0 \cup \bs{\gamma}_1$, where $\bs{\gamma}_0$ is an orbit set in $\op{int}(Y')$ and $\bs{\gamma}_1$ is an orbit set in $Y''$. Note that by construction $e_-$ and $h_-$ belong to $Y''$, while every other orbit in the buffer zone $N$ belongs to $\op{int}(Y')$. Since $[\bs{\gamma}] \cdot [\Sigma]=1$ and no orbit intersects $\Sigma$ negatively,\footnote{Those which did have been intercepted by the balls $B_i$.} we are left with two possibilities:
  \begin{itemize}
  \item[(0)]  $[\bs{\gamma}_0] \cdot [\Sigma]=0$ and $[\bs{\gamma}_1] \cdot [\Sigma]=1$, or
  \item[(1)]  $[\bs{\gamma}_0] \cdot [\Sigma]=1$ and $[\bs{\gamma}_1] \cdot [\Sigma]=0$.
  \end{itemize}
  We denote by $\mathfrak{C}_0$ the subspace generated by the orbit sets of type (0) and by $\mathfrak{C}_1$ the subspace  generated by  the orbit sets of type (1). We write the differential $\partial \colon \mathfrak{C} \to \mathfrak{C}$ with respect to the  decomposition $\mathfrak{C}= \mathfrak{C}_0 \oplus \mathfrak{C}_1$ as a matrix
\begin{equation*}
\partial=
\begin{pmatrix}
\partial_{0,0} & \partial_{1,0}\\
\partial_{0,1} & \partial_{1,1}
\end{pmatrix}.
\end{equation*}

For $j=0,1$ we introduce sets ${\mathcal P}_j' \subset H_1(\op{int}(Y'); \Z)$ and ${\mathcal P}_j''  \subset H_1(Y''; \Z) \simeq H_1(Y''_{B_i}; \Z)$ consisting of homology classes $A$ such that $A \cdot [\Sigma]=j$. We denote
$$ECC(- , {\mathcal P}_j^\star) = \bigoplus_{A \in {\mathcal P}_j^\star} ECC(-, A),$$
where ${\mathcal P}_j^\star$ stands for either ${\mathcal P}_j'$ or ${\mathcal P}_j''$.
It is clear that
\begin{gather}
  \nonumber  \mathfrak{C}_0 = \varinjlim \left ( ECC^{<L_i}(\operatorname{int}(Y'), f_i\lambda_0, \mathcal{P}_0')\otimes ECC^{<L_i}(Y''_{B_i}, \Gamma_{B_i}, f_i\lambda_0, \mathcal{P}_1'') \right ), \\
\nonumber \mathfrak{C}_1 =  \varinjlim \left ( ECC^{<L_i} (\operatorname{int}(Y'), f_i \lambda_0, \mathcal{P}_1')\otimes ECC^{<L_i}(Y''_{B_i}, \Gamma_{B_i}, f_i \lambda_0, \mathcal{P}_0'') \right ).
\end{gather}
For the moment the identifications above are only as vector spaces; later we will prove that they are identifications as chain complexes. Inspired by these identifications, we will write $\bs{\gamma}_0 \otimes \bs{\gamma}_1$ for $\bs{\gamma}_0 \cup \bs{\gamma}_1$.

First we prove a preliminary lemma.

\begin{lemma} \label{lemma: outside the tube}
  The only holomorphic curve contributing to the differential of $\mathfrak{C}$ whose projection to $Y$ is not contained in $Y \setminus (\op{int}(D_1) \times S^1)$, after removing all covers of trivial cylinders,  is a holomorphic plane completely contained in $D^1 \times S^1$ and asymptotic to $h_0$.
\end{lemma}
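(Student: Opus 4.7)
The plan is to apply positivity of intersections to the foliation of $D_2 \times S^1$ by the concentric tori $T_r = \{r\} \times T^2$, combined with the observation that $D_1 \times S^1$ is topologically a solid torus. I would first identify the orbits in $\overline{D_1 \times S^1} \cap Y_{B_i}$ that appear in generators of $\mathfrak{C}$. By the construction in Section~\ref{ssec: Reeb}, the Morse-Bott tori in $\op{int}(D_1) \times S^1$ are foliated by Reeb orbits of slope $(g'(r), h'(r))$ with $g' > 0$ and $h' < 0$ for $r \in (0,1)$, so none of these slopes has the form $(n,1)$ or $(\pm 1, 0)$. By the choice of perturbations $f_i$, these Morse-Bott tori are therefore left degenerate and their orbits contribute no generators to $\mathfrak{C}$ in the direct limit. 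Hence the only Reeb orbit in $\overline{D_1 \times S^1} \cap Y_{B_i}$ appearing as an end of any generator of $\mathfrak{C}$ is $h_0$.

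Now suppose $u$ is a holomorphic curve contributing to the differential whose projection $C$ meets $\op{int}(D_1 \times S^1)$. For generic $r_0 \in (0,1)$, the slope $\sigma_{r_0}$ is irrational, $C$ is transverse to $T_{r_0}$, and since no generator-orbit of $\mathfrak{C}$ lies in $\{r < r_0\} \cap Y_{B_i}$, the preimage $u^{-1}(\R \times \{r \le r_0\})$ is compact. Thus $C \cap \{r \le r_0\}$ is a compact surface whose boundary $\alpha_{r_0} := C \cap T_{r_0}$ is contained in the annulus $T_{r_0} \cap Y_{B_i}$ (the complement in $T_{r_0}$ of the annular trace of the thickened disk $D_1 \times \{0\}$ in $B_i$). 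Since $\{r \le r_0\} \cap Y_{B_i}$ is a 3-ball, and the inclusion $T_{r_0} \cap Y_{B_i} \hookrightarrow T_{r_0}$ has image in $H_1(T_{r_0})$ generated by the meridian $[\partial D_{r_0}]$, we must have $\sl(u, T_{r_0}) = k_{r_0} [\partial D_{r_0}]$ for some integer $k_{r_0}$. Claim~\ref{claim: positively transverse} then yields $k_{r_0} \cdot h'(r_0) = \sl(u, T_{r_0}) \cdot \sigma_{r_0} \ge 0$ with equality iff $\alpha_{r_0} = \emptyset$; since $h'(r_0) < 0$ on $(0,1)$, this forces $k_{r_0} \le 0$, with strict inequality whenever $C$ meets $T_{r_0}$.

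To conclude that $C$ is entirely contained in $D_1 \times S^1$, I would run the analogous positivity analysis on the tori $T_r$ with $r$ slightly larger than $1$ in $(D_2 \setminus D_1) \times S^1$, where $h'(r) > 0$ reverses the sign in the positivity inequality. Across $r = 1$, the slope $[\alpha_r]$ can jump only by multiples of the class of $h_0$, which is also a meridional class $(1,0)$. Combining the opposite-sign positivity constraints just inside and just outside $\partial D_1 \times S^1$ with the fact that only meridional jumps are available forces $\alpha_r = \emptyset$ on both sides, hence $C \subset D_1 \times S^1$. Once this is established, every end of $u$ is at $h_0$; the partition conditions from \cite[Theorem 4.15]{Hu2} for the hyperbolic orbit $h_0$ allow only simple ends, and combined with the ECH index $I=1$ condition and the removal of trivial cylinders, $u$ must be a plane with a single positive end at $h_0$. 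The main obstacle is the bookkeeping around $r = 1$: because the Morse-Bott families of slopes $(n,1)$ in $(D_2 \setminus D_1) \times S^1$ accumulate at the boundary torus, one must carefully track which of these appear as ends of $u$ for a given direct-limit parameter $L_i$, and ensure the slope-jump analysis accounts for all permissible ends on $\partial D_1 \times S^1$ simultaneously.
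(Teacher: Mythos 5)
Your overall strategy (positivity of intersections with the concentric tori $T_r$, then index/partition conditions to pin down the plane) is in the same family as the paper's, but there are two concrete problems.

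First, your explanation for why no generator of $\mathfrak{C}$ lies in $\op{int}(D_1)\times S^1$ is wrong. You argue that the Morse--Bott tori at $r\in(0,1)$ are not perturbed by $f_i$, hence ``left degenerate,'' hence contribute no generators. But an unperturbed degenerate Morse--Bott family does not simply disappear from an ECH chain complex; if such orbits survived in $Y_{B_i}$ the construction of $ECC^{<L_i}$ would be ill-defined. The correct reason is the one implicit in Claim~\ref{claim: no negatively intersecting orbit}: for $r\in(0,1)$ one has $h'(r)<0$, so every orbit on $T_r$ has nonzero $t$-winding and therefore crosses the thickened disk $D_1\times\{0\}\subset B_i$. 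These orbits are cut by the excavated ball and are not closed Reeb orbits of $(Y_{B_i},f_i\lambda_0)$ at all. Your conclusion is right, but for the wrong reason, and the wrong reason obscures why the construction had to excavate $B_i$ in the first place.

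Second, and more seriously, the step forcing $C\subset D_1\times S^1$ is a genuine gap, which you yourself flag. Your inequality $k_{r_0}\le 0$ for $r_0\in(0,1)$ is fine, but for $r_0>1$ the class $\sl(u,T_{r_0})$ need not be meridional (ends at $e_{1/n}$, $h_{1/n}$ of slope $(n,1)$ are allowed), so there is no opposite-sign meridional constraint to play against. Moreover, after perturbation the torus $T_1=\bdry D_1\times S^1$ is no longer foliated by Reeb orbits, so ``the slope jumps only by meridians across $r=1$'' is not a statement you can make directly. The paper avoids this entirely: from the meridional slope it deduces, via Claim~\ref{claim: positively transverse}, that $u$ must have ends at $h_0$; then it appeals to the ECH index. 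A plane asymptotic to $h_0$ is embedded and has $I=1$, index is additive over components, and the only $I=0$ components are covers of trivial cylinders, so the non-trivial-cylinder part of $u$ is exactly one such plane. Its containment in $D_1\times S^1$ and uniqueness come from Wendl's explicit construction in \cite[Section 3.1]{We2} together with the Morse--Bott correspondence, not from any further slope bookkeeping. In short, replace the slope-jump analysis at $r=1$ by the index argument, and correct the degeneracy justification, and your proof lines up with the paper's.
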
 

\begin{proof}
Since $\mathfrak{C}$ has no generators inside $\op{int}(D_1) \times S^1$, the projection of such a curve intersected with $\partial D_1 \times S^1$ must be homologous to a multiple of the meridian. Then by Claim \ref{claim: positively transverse} it must have ends at $h_0$. The only possibility for such a curve to have index $I=1$ is to be a holomorphic plane asymptotic to $h_0$ together, possibly, with covers of trivial cylinders.  Such a holomorphic plane was constructed in \cite[Section 3.1]{We2} in the Morse-Bott setting, and the transition from the Morse-Bott setting to the nondegenerate one is the ``easy case'' of the Morse-Bott correspondence, since there is no need to glue Morse trajectories. 
\end{proof}  

\begin{lemma}\label{lemma: spectral}  
$\partial_{0,1} =0$.
\end{lemma}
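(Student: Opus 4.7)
The plan is to derive a contradiction from the assumption that some holomorphic curve (or Morse-Bott cascade) $u$ contributes to $\partial_{0,1}$, via a positivity-of-intersections argument in the buffer zone $N$. Such $u$ has source $\bs{\gamma}=\bs{\gamma}_0\otimes\bs{\gamma}_1\in\mathfrak{C}_0$ and target $\bs{\gamma}'=\bs{\gamma}'_0\otimes\bs{\gamma}'_1\in\mathfrak{C}_1$, so informally $u$ must transport one unit of $\Sigma$-intersection across $N$ from the $Y''$-side to the $Y'$-side. I first observe that $\bs{\gamma}_0\cap N\subset\{e_{\pm,0},h_{\pm,0}\}\subset[-\epsilon,\epsilon]\times T^2$, since all other orbits in $N$ (namely $e_\pm$, $h_\pm$, $e_{\pm,1/n}$, $h_{\pm,1/n}$) have algebraic intersection $1$ with $\Sigma$ and including any one of them would violate $[\bs{\gamma}_0]\cdot[\Sigma]=0$; symmetrically $\bs{\gamma}'_0$ contains at most one such orbit. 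Consequently on $(\epsilon,1)\times T^2$, the tori $T_s$ carry at most one asymptotic orbit of $u$, necessarily from $\bs{\gamma}'_0$ and of slope $(n^*,1)$ for some $n^*\geq 1$.

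Next, for generic $s_0\in(\epsilon,1)$, I compute $\sl(u,T_{s_0})=(\alpha,\beta)\in H_1(T^2)$ in the $(\phi,t)$-basis. Since $\Sigma\cap T_{s_0}$ has homology class $(1,0)$, the algebraic intersection inside $T_{s_0}$ is $\sl(u,T_{s_0})\cdot(1,0)=\pm\beta$, which records up to sign the net flux of $\Sigma$-intersection transported by $u$ across $T_{s_0}$. A topological balancing using $\partial u=\bs{\gamma}-\bs{\gamma}'$ yields $|\beta|=1$ on the sub-interval $(s^*,1)$, where $s^*\in(\epsilon,1)$ is the critical value coming from the orbit of $\bs{\gamma}'_0$ (if any), and $|\beta|=2$ on $(\epsilon,s^*)$, since that orbit, having slope $(n^*,1)$, contributes an additional $\pm 1$ to $\beta$ as $s$ decreases through $s^*$. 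If no such orbit is present then $|\beta|=1$ on all of $(\epsilon,1)$.

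By Lemma \ref{Morse Bott tori in the buffer zone}, the Morse-Bott tori of slope $(n,1)$ accumulate toward $s=\epsilon^+$, so infinitely many of them lie in $(\epsilon,s^*)$ (or in $(\epsilon,1)$ if no $s^*$ exists). Applying Claim \ref{claim: positively transverse}(2) to each such torus $T_{s^*_n}$ gives $\sl(u,T_{s^*_n})\cdot(n,1)=\alpha-n\beta\geq 0$ for infinitely many $n$. Combined with $\beta>0$---the positive sign being forced by the orientation of $\sl(u,T_s)$ induced from the $J$-holomorphic structure on $u$ together with the orientation of $[\Sigma\cap T_{s_0}]$---this gives $\alpha\geq n\beta\to\infty$, a contradiction. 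If instead the orientation conventions yield $\beta<0$, the analogous argument on the opposite half $s\in(-1,-\epsilon)$, where $(n,1)$-slope Morse-Bott tori accumulate toward $s=-\epsilon^-$, completes the contradiction after tracking how $\beta$ evolves across the central region (where horizontal orbits of $\bs{\gamma}_0$ and $\bs{\gamma}'_0$ may affect $\alpha$ but not $\beta$).

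The principal difficulty will be pinning down the correct sign of $\beta$ and verifying the jump formula for $\sl(u,T_s)$ across the critical value $s^*$, both of which require careful orientation-tracking at the interface $\partial Y'=\partial Y''$ and at the asymptotic ends of $u$; once these are settled, the positivity contradiction in the buffer zone is immediate.
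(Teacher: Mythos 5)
Your key idea---using positivity of intersections with the Morse-Bott tori of slope $(n,1)$ that accumulate near $s=\epsilon$---is a genuine ingredient of the paper's argument, but it is only half of it, and the part you add to make it self-contained (the flux bookkeeping for $\beta$) contains a sign error that causes the whole argument to collapse.

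The sign error is this. You claim that crossing the orbit of $\boldsymbol{\gamma}_0'$ of slope $(n^*,1)$ changes $|\beta|$ from $1$ to $2$. It does the opposite. For generic $s$, $\sl(u,T_s)$ equals (up to a fixed overall sign) the difference $[\boldsymbol{\gamma}^{>s}]-[\boldsymbol{\gamma}'^{>s}]$ of the homology classes of positive and negative asymptotic orbits lying on the $Y''$-side of $T_s$. An orbit of $\boldsymbol{\gamma}_0'$ is a \emph{negative} end of $u$, so as $s$ decreases through $s^*$, $\sl(u,T_s)$ changes by $-(n^*,1)$, i.e.\ $\beta$ drops from $1$ to $0$, not up to $2$. (A positive end at a $(m,1)$-orbit would raise $\beta$, but you correctly observed that $\boldsymbol{\gamma}_0\cap N$ can only contain horizontal orbits, so this never occurs.) With $\beta=0$ on $(\epsilon,s^*)$, the positivity inequality $\alpha-n\beta>0$ against the accumulating tori reads $\alpha>0$, which is no contradiction at all. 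Your dichotomy ``$\beta>0$ or $\beta<0$'' therefore misses exactly the case that actually occurs.

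This is not a repairable detail: a curve with a negative end at $e_{-,1/n}$ or $h_{-,1/n}$ (and $\beta=0$ deeper in) is topologically and homologically unobstructed, so no amount of positivity-of-intersections bookkeeping in $N$ can rule it out. The paper's proof uses positivity only to force such a curve to be \emph{stopped in the buffer zone} (Cases (2),(3) of Lemma \ref{lemma: spectral}), and then rules those configurations out by a computation of the ECH index via Equation~\eqref{eqn: index formula}, which is where the hypothesis $g\geq 3$ enters. Your proposal never invokes the ECH index and never uses $g\geq 3$, which is a strong signal that something essential is missing. It also does not address the paper's Case (4), where $\sl(u,T)=(-1,0)$ and positivity must instead be applied to a torus $\{r=2-\epsilon\}$ on the $Y''$-side, nor Case (5), where $\sl(u,T)=0$ and the trapping lemma is needed; both have $\beta=0$ and thus fall outside your dichotomy.
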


\begin{proof} 
We will show that there is no ECH index $1$ holomorphic curve from an orbit set $\bs{\gamma}_0 \otimes \bs{\gamma}_1$ of $\mathfrak{C}_0$ to an orbit set $\bs{\gamma}_0' \otimes \bs{\gamma}_1'$ of $\mathfrak{C}_1$. The element $\bs{\gamma}_1$ is one of the following list:
  \begin{equation} \label{list}
    \begin{matrix}
    e_-, & h_-, & \delta_i, & e_2, & h_2, & e_{1/n}, & h_{1/n}, \\
    e_- h_0, & h_- h_0, & \delta_i h_0, & e_2 h_0, & h_2 h_0, & e_{1/n} h_0, & h_{1/n} h_0,
    \end{matrix}
  \end{equation}
and  the element $\bs{\gamma}_1'$ is either $h_0$ or $\emptyset$. Moreover every holomorphic curve contributing to $\partial_{0,1}$ projects to $Y \setminus (\op{int}(D_1) \times S^1)$ by Lemma \ref{lemma: outside the tube}.
We introduce the notation $\bs{\gamma}_1^\flat$ to denote $\bs{\gamma}_1$ with $e_-$ and $h_-$ removed.

Let $u$ be a $J$-holomorphic curve from $\bs{\gamma}_0\otimes \bs{\gamma}_1$ to $\bs{\gamma}_0'\otimes \bs{\gamma}_1'$. We analyze how the curve $u$ approaches the buffer region in $Y'$ from the $Y''$-side using the following homological argument: Take a torus $T$ parallel to and oriented in the same way as $\partial Y'$ and slightly inside $Y''$;  we may assume that $T$ and its nearby tori are linearly foliated by Reeb orbits by adjusting the construction of the contact form. Let $Z\subset Y''$ be a slight retraction of $Y'' \setminus (\op{int}(D_1) \times S^1)$ obtained by excising the thickened torus between $\partial Y''$ and $T$, and  let $u_Z$ be the projection to $Z$ of the restriction of $u$ to $\R\times Z$. Then $u_Z\cap T$ is homologous to $\bs{\gamma}_1^\flat- \bs{\gamma}_1'$ in $H_1(Z)$ via the surface $u_Z$. Let $b$ be the homology class of $- \partial S$ and $f$ the homology class of the $S^1$-fiber in $H_1(Z) \simeq H_1(S\setminus \op{int}(D_1)) \oplus H_1(S^1)$.  Since $[\bs{\gamma}_1^\flat]= nb+f $, $n\geq 0$, or $b$, or $0$, and $[\gamma_1']= 0$ or $b$, there are five possibilities: 
\begin{enumerate}
\item $\sl(u,T)=(0,1)$, in which case $u$ cannot cross $\partial Y''$ since it is blocked by the vertical flow along $\partial Y''$; see the Blocking Lemma 5.2.3 in \cite{CGH0}. Then $u$ has an end at $e_-$ or $h_-$ and therefore does not contribute to $\partial_{0,1}$. 
\item $\sl(u,T)=(n,1)$, $n\geq 1$, in which case $u$ is either stopped inside the buffer zone by a negative orbit of slope $(n,1)$, or has a negative end at an orbit of slope $(n-k,1)$,  $0< k\leq n$. In the latter case, $\sl(u,\{s_0\}\times T^2)=(k,0)$, where $s_0>0$ is smaller than the $s$-value of the torus foliated by orbits of slope $(n-k,1)$. Then $u$ is blocked in the buffer zone by $k$ orbits of slope $(1,0)$. By this we mean the hyperbolic orbit must have multiplicity at most $1$ but the elliptic orbit can have multiplicity $k$ or $k-1$ and the same number of ends limiting to it.
\item $\sl(u,T)=(1,0)$, in which case $u$ has positive ends at $h_0$ and either at $e_-$ or $h_-$. Then $\sl(u,\{s_1\}\times T^2)=(1,1)$ for $s_1$ slightly smaller than $1$, and $u$ is blocked in the buffer zone by a negative orbit of slope $(1,1)$.
\item $\sl(u,T)=(-1,1)$ or $(-1,0)$, in which case $u$  has a negative end at $h_0$ and no orbits $e_{1/n}, h_{1/n}$ at the positive end. We consider $\sl(u,\{r=2-\epsilon\})$, where $\{r=2-\epsilon\}$ is a torus in $D_2\times S^1$ which is close to the boundary. The slope $\sl(u,\{r=2-\epsilon\})$ must be $(-1,0)$ due to the negative end $h_0$ and the absence of orbits $e_{1/n}, h_{1/n}$ at the positive end, but $(-1,0)$ is not positively transverse to the Reeb vector field, contradicting Claim~\ref{claim: positively transverse}.  
\item $\sl(u, T)=0$, in which case $\bs{\gamma}_1= h_-$, $e_-$, $h_-h_0$, or $e_-h_0$. The trapping lemma \cite[Lemma 5.3.2]{CGH0} implies that either $u$ consists of a holomorphic cylinder from $h_-$ to $e_-$, and therefore does not contribute to $\partial_{0,1}$; or $u_Z$ intersects $T$, and this is incompatible with $\sl(u, T)=0$ by Claim \ref{claim: positively
transverse}.
\end{enumerate}
Hence we are left with Cases (2) and (3).

We explain how to compute $I_{ECH}(u)$ in Case (3). The projection $C$ of the embedded surface $u(\dot F)$ to $Y\setminus (\op{int}(D_1) \times S^1)$ from $\bs{\gamma}^+=h_0 e_-$ or $h_0h_-$ to an orbit $\bs{\gamma}^-=e_{-,1/1}$ or $h_{-,1/1}$ in $N$ of slope $(1,1)$ is constructed by surgering a horizontal section over an enlargement of $S\setminus \operatorname{int} (D_2)$ together with an annulus. (Surgering with an annulus changes $\chi$ by $-1$.)  Since $C$ is embedded and all the orbits involved are simple,
\begin{equation} \label{eqn: index formula}
I_{ECH}(u)=\op{ind}(u) =-\chi (\dot{F})+2\langle c_1(\xi,\tau),C \rangle +\mu_{CZ}(\bs{\gamma}^+)-\mu_{CZ}(\bs{\gamma}^-).
\end{equation}
Here $c_1 (\xi,\tau)$ is the first Chern class of $\xi$ relative to the trivialization $\tau$.  Then $\chi(\dot F)=-2g-1$, $\langle c_1(\xi, \tau),C \rangle=-2g$, $\mu_{CZ}(h_0)=0$, $\mu_{CZ}(h_-)=0$, $\mu_{CZ}(e_-)=-1$, $\mu_{CZ}(e_{-,1/1})=-1$, $\mu_{CZ}(h_{-,1/1})=0$, and 
$$I_{ECH}(u)\leq (2g+1) -4g -1 -0\leq -2g< 0,$$
since $g\geq 3$. This is a contradiction.

Next we consider Case (2). In this case $C$ goes from the orbit set $\bs{\gamma}^+=h_0e_{1/(n-1)}$, $h_0 h_{1/(n-1)}$, $e_{1/n}$, $h_{1/n}$, or a vertical orbit ($\not=e_-$ or $h_-$) times $h_0$ to the orbit set $\bs{\gamma}^-=e_{-,1/n}$, $h_{-,1/n}$, $e_{-,1/(n-k)}e_{-,0}^{k-1} h_{-,0}$, $h_{-,1/(n-k)}e_{-,0}^{k-1} h_{-,0}$, $e_{-,1/(n-k)}e_{-,0}^k$, or $h_{-,1/(n-k)}e_{-,0}^k$, where $e_{-,0}$ and $h_{-,0}$ are the slope $(1,0)$ orbits.  When $C$ is from $\bs\gamma^+=e_{1/n}$ or $h_{1/n}$ to $\bs\gamma^-=e_{-,1/n}$ or $h_{-,1/n}$, $C$ is an $n$-fold cover of an enlargement of $S\setminus \operatorname{int} (D_2)$ and has $\chi=2ng$.  If $\bs\gamma^+$ is changed to $h_0$ times an orbit, then $C$ is modified by surgering with an annulus. (This changes $\chi$ by $-1$ as before.)  If $\bs\gamma^-$ is changed to $e_{-,1/(n-k)}e_{-,0}^{k-1} h_{-,0}$, $h_{-,1/(n-k)}e_{-,0}^{k-1} h_{-,0}$, $e_{-,1/(n-k)}e_{-,0}^k$, or $h_{-,1/(n-k)}e_{-,0}^k$, then $C$ is modified by adding $k-1$ branch points and surgering with an annulus. (This changes $\chi$ by $-k$.) Even though $e_{-,0}$ may have multiplicity $\geq 0$, Formula~\eqref{eqn: index formula} still holds because all ends at $e_{-,0}$ are simple by the partition condition and the multiples of $e_{-,0}$ still have Conley-Zehnder index $-1$.  The number of ends $l$ satisfies $2\leq l\leq 3+k< 3+n$, $\chi(\dot F)= 2-2ng-l$, $\langle c_1(\xi, \tau),C \rangle=-2ng$, $\mu_{CZ}(\bs\gamma^+)\leq 1$, and $\mu_{CZ}(\bs\gamma^-)\geq -k-1$. 
Thus we have 
$$I_{ECH} (u) \leq (2ng+l-2)-4ng +1-(-k-1) \leq -2ng +2n+3<0,$$ 
since $g\geq 3$. This is also a contradiction.
\end{proof}


A consequence of $\partial_{0,1}=0$ is that $\partial_{0,0}^2= \partial_{1,1}^2=0$ and $\partial_{1,0}$ is a chain map.

\subsubsection{Computation of the homologies of $\mathfrak{C}_0$ and $\mathfrak{C}_1$}

\begin{lemma}\label{lemma: zero} 
$H_*(\mathfrak{C}_1,\bdry_{1,1})=0$
\end{lemma}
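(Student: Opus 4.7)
The plan is to exhibit $(\mathfrak{C}_1,\partial_{1,1})$ as the mapping cone of an identity map between chain complexes, and hence acyclic.

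\emph{Identifying generators.} First I would observe that among the orbits listed in Lemma~\ref{lemma: list of orbits}, the only orbit in $Y''$ with zero algebraic intersection with $\Sigma$ is the hyperbolic orbit $h_0$. Since $h_0$ is hyperbolic, it appears with multiplicity at most one in any orbit set. Hence the $Y''$-factor of every generator of $\mathfrak{C}_1$ is either $\emptyset$ or $h_0$, and as graded vector spaces
$$\mathfrak{C}_1 = \mathfrak{C}_1^{\emptyset}\oplus \mathfrak{C}_1^{h_0}, \qquad \mathfrak{C}_1^{\emptyset}\simeq \mathfrak{C}_1^{h_0}\simeq C',$$
where $C':=\varinjlim ECC^{<L_i}(\op{int}(Y'),f_i\lambda_0,\mathcal{P}_1')$ and both isomorphisms are tautological, sending $\bs{\gamma}_0\otimes \bs{\gamma}_1$ to $\bs{\gamma}_0$.

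\emph{Structure of the differential.} My goal is to show that with respect to this decomposition
$$\partial_{1,1}=\begin{pmatrix}\partial' & \op{id}\\ 0 & \partial'\end{pmatrix},$$
where $\partial'$ denotes the ECH differential on $C'$. Once established, this realizes $\mathfrak{C}_1$ as the mapping cone of $\op{id}\colon (C',\partial')\to (C',\partial')$, which is acyclic by a standard argument.

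\emph{Counting contributions.} To verify the matrix form I would analyze the index-$1$ holomorphic curves contributing to $\partial_{1,1}$ using Lemma~\ref{lemma: outside the tube} together with the slope-type arguments of Lemma~\ref{lemma: spectral}. After peeling off trivial cylinders over $h_0$, every contributing curve is either the holomorphic plane in $D_1\times S^1$ asymptotic to $h_0$, or has projection contained in $Y\setminus(\op{int}(D_1)\times S^1)$. The plane, paired with trivial cylinders over $\bs{\gamma}_0$, produces precisely the off-diagonal entry $\op{id}\colon \mathfrak{C}_1^{h_0}\to \mathfrak{C}_1^{\emptyset}$, with coefficient $1\in\F$.

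\emph{Slope analysis.} For the remaining curves I would take a torus $T\subset Y''$ parallel to $\partial Y'$ and slightly inside $Y''$, and compute $\sl(u,T)$, which is determined by the $Y''$-ends of $u$ (lying in $\{\emptyset,h_0\}$). If the $Y''$-ends agree then $\sl(u,T)=0$, and the trapping argument of \cite[Lemma 5.3.2]{CGH0} (as used in case~(5) of Lemma~\ref{lemma: spectral}) confines $u$ to $\R\times \op{int}(Y')$, where it is counted by $\partial'$; this gives the two diagonal blocks. If the positive $Y''$-end is $h_0$ and the negative $\emptyset$, then $\sl(u,T)=(1,0)$, and the topological-index computation of case~(3) of Lemma~\ref{lemma: spectral} carries over to yield $I_{ECH}(u)\leq -2g<0$ (since $g\geq 3$), ruling out additional contributions beyond the plane. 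The symmetric case with positive end $\emptyset$ and negative end $h_0$ yields $\sl(u,T)=(-1,0)$, which is excluded by the positivity of intersections in Claim~\ref{claim: positively transverse}; equivalently, such a curve would have negative $\omega$-area.

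\emph{Main obstacle.} The technical heart is the bookkeeping for connected index-$1$ curves that cross $\partial Y'$ when $\bs{\gamma}_0$ or $\bs{\gamma}_0'$ already contains buffer-zone orbits. My expectation is that the dominant $-2ng$ contribution from $-\chi(\dot F)+2\langle c_1(\xi,\tau),C\rangle$ in the index formula, coming from the genus-$g$ surface $S$ with $g\geq 3$, keeps the index estimates negative exactly as in the proof of Lemma~\ref{lemma: spectral}.
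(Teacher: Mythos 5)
Your proof is correct and takes essentially the same approach as the paper: the paper also identifies $\mathfrak{C}_1\simeq C'\oplus C'$ with differential lower/upper triangular, the off-diagonal entry being the identity induced by the holomorphic plane capping $h_0$, and deduces acyclicity (the paper writes down a contracting homotopy $K(\bs{\gamma}\otimes\emptyset)=\bs{\gamma}\otimes h_0$, $K(\bs{\gamma}\otimes h_0)=0$ rather than invoking the cone-of-identity principle, but these are the same observation). The paper's justification of the triangular structure likewise rests on the arguments of Cases (3) and (4) of Lemma~\ref{lemma: spectral} (exclusion of $h_0$ as a negative end, and the index estimate ruling out curves entering the buffer region with $\sl(u,T)=(1,0)$), exactly as you anticipate in your ``slope analysis'' and ``main obstacle'' paragraphs.
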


\begin{proof}
The elements of $\mathfrak{C}_1$ are linear combinations of elements of the form $\bs{\gamma}\otimes h_0$ or $\bs{\gamma} \otimes \emptyset$. By the proof of Case (4) of Lemma \ref{lemma: spectral}, no holomorphic curve in $Y''_{B_i}$ has $h_0$ as a negative end, and by the argument of Case (3) of Lemma \ref{lemma: spectral}, the only holomorphic curve with a positive end at $h_0$ and no other positive end in $Y''$ is the holomorphic plane over $(D_1\times S^1)\setminus B_0$. Thus we can decompose $\partial_{1,1}$ as
  \begin{align*}
    \partial_{1,1}(\bs{\gamma} \otimes \emptyset) & = \partial' \bs{\gamma} \otimes \emptyset, \\
     \partial_{1,1}(\bs{\gamma} \otimes h_0) & =  \partial' \bs{\gamma} \otimes h_0 + \bs{\gamma} \otimes \emptyset,
  \end{align*}
  where $\partial'$ is the differential in $ECC(\op{int}(Y'), \lambda_0)$. The map $K \colon \mathfrak{C}_1\to \mathfrak{C}_1$ defined by
  $$K(\bs{\gamma} \otimes \emptyset) = \bs{\gamma} \otimes h_0, \quad
  K(\bs{\gamma} \otimes h_0)=0$$
  satisfies $\partial_{1,1} \circ K + K \circ \partial_{1,1} = \op{id}$, and therefore $H_*(\mathfrak{C}_1, \partial_{1,1})=0$
\end{proof}

The following lemma enumerates the holomorphic curves that are involved in the calculation of $H_*(\mathfrak{C}_0,\partial_{0,0})$.

\begin{lemma}\label{lemma: list of holomorphic curves}
The list of all connected $I=1$ holomorphic curves in $\R \times Y''$ with ends in $\mathcal{P}_0''\cup \mathcal{P}_1''$ consists of:
\begin{enumerate}
\item[(A)] Two cylinders each from $\delta_i$ to $e_-$, a cylinder from $h_2$ to $e_-$, a cylinder from $e_2$ to $h_-$, and two cylinders each from $e_2$ to $h_2$ and $h_-$ to $e_-$ that correspond to gradient trajectories of a Morse perturbation of $f$ on $S\setminus \operatorname{int}(D_2)$.
\item[(B)] Two cylinders each from $e_{1/n}$ to $h_{1/n}$ and pairs-of-pants in $\R \times (D_2\setminus \op{int}(D_1))\times S^1$ from $e_2 h_0$ to $e_{1/1}$; $h_2h_0$ to $h_{1/1}$; $e_{1/n}h_0$ to $e_{1/(n+1)}$; and $h_{1/n}h_0$ to $h_{1/(n+1)}$.  The pairs-of-pants all belong to moduli spaces of cardinality $1$ mod $2$ (after quotienting by target $\R$-translations).
\item[(C)] A holomorphic plane over $(D_1\times S^1)\setminus B_0$ with a positive end at $h_0$.
\end{enumerate}
\end{lemma}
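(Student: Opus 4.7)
The organizing principle is to analyze any candidate $I=1$ curve $u$ through its slopes on the separating tori $T_2=\partial D_2\times S^1$, $T_1=\partial D_1\times S^1$, and a torus $T_\partial$ parallel to and slightly interior to $\partial Y''$. By Lemma \ref{lemma: outside the tube}, any $u$ whose projection meets $\op{int}(D_1)\times S^1$, after stripping covers of trivial cylinders, is precisely the holomorphic plane asymptotic to $h_0$, yielding case (C). For the remaining curves the projection lies in $Y''\setminus(\op{int}(D_1)\times S^1)$, and I split into two further subcases according to whether $u$ crosses $T_2$.

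For curves staying in $(S\setminus\op{int}(D_2))\times S^1$, the contact form $\lambda_0$ has the Morse-Bott form $f\,dt+\beta$, and after the perturbation the Morse-Bott correspondence of \cite{Yao1,Yao2,HS1} identifies $I=1$ curves with length-$\le 1$ gradient cascades of $f$. The two descending separatrices of each saddle $\delta_i$ produce the two cylinders $\delta_i\to e_-$; the long gradient cascades from the perturbed maximum $\partial D_2$ through a saddle down to the perturbed minimum $\partial S$ give the single cylinders $h_2\to e_-$ and $e_2\to h_-$; and the intra-torus Morse-Bott cylinders on the two perturbed boundary circles provide the two cylinders $e_2\to h_2$ and the two cylinders $h_-\to e_-$. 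This accounts for (A).

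For curves crossing $T_2$, positivity of intersections (Claim \ref{claim: positively transverse}) forces $\sl(u,T_2)=(n,1)$ for some $n\ge 1$. If $u$ has no end at $h_0$, the homology class of its intersections with parallel tori is constant in $(D_2\setminus \op{int}(D_1))\times S^1$, and the only orbits of slope $(n,1)$ there are $e_{1/n}$ and $h_{1/n}$; the Morse-Bott family of slope-$(n,1)$ tori then produces exactly the two cylinders $e_{1/n}\to h_{1/n}$. If $u$ has a positive end at $h_0$ (slope $(1,0)$), the slope carried across $T_1$ differs from that across $T_2$ by a $(1,0)$-contribution, so positive ends at $e_2h_0$ or $h_2h_0$ force a negative end of slope $(1,1)$, and positive ends at $e_{1/n}h_0$ or $h_{1/n}h_0$ force a negative end of slope $(n+1,1)$. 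Matching these with the available orbits gives precisely the pairs-of-pants listed in (B); their existence and mod-$2$ count equal to $1$ follow from the explicit counterclockwise-rotating structure of $\lambda_0$ in $(D_2\setminus \op{int}(D_1))\times S^1$, exactly as in the pair-of-pants computation of \cite[Section 7]{CGH0}.

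The main obstacle is to rule out any further $I=1$ curve. The strategy mirrors Lemma \ref{lemma: spectral}: for a hypothetical exotic $u$ with projection $C$ one writes $I_{ECH}(u)=-\chi(\dot F)+2\langle c_1(\xi,\tau),C\rangle+\mu_{CZ}(\bs\gamma^+)-\mu_{CZ}(\bs\gamma^-)$ using the partially defined trivialization $\tau$. Multiple wrappings of $C$ across $T_2$ make $\langle c_1(\xi,\tau),C\rangle$ scale like $-2ng$, while the Conley-Zehnder and Euler characteristic contributions grow at most linearly in $n$ and the number of ends; the bound $g\ge 3$ then forces $I_{ECH}(u)\ge 2$ for every configuration not already enumerated. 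Stray approaches to $T_\partial$ are excluded by the Blocking and Trapping Lemmas of \cite[Section 5]{CGH0}, and together with the slope analysis above this completes the enumeration.
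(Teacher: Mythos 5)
Your overall outline (treating cases (C), (A), (B) by region) matches the paper's, and your handling of (C) via Lemma~\ref{lemma: outside the tube} and of (A) via the Morse--Bott correspondence with gradient trajectories of $f$ is essentially the same as in the paper, modulo some loose terminology (the single cylinders $h_2\to e_-$ and $e_2\to h_-$ come from single gradient trajectories of the perturbed Morse function, not from ``cascades through a saddle,'' which would give broken $I=2$ configurations). However, there are two genuine gaps in how you treat case (B) and the exclusion of exotic curves.

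First, your dichotomy for curves in $Y''\setminus (D_1\times S^1)$ is ``staying in $(S\setminus\op{int}(D_2))\times S^1$'' versus ``crossing $T_2$,'' and you assert that curves crossing $T_2$ must have $\sl(u,T_2)=(n,1)$, $n\ge 1$. But this misses the essential step: the paper shows that \emph{no} curve actually crosses $T_2$ transversally. Since the only orbits in $Y''\setminus(\op{int}(D_2)\times S^1)$ are vertical and orbit sets intersect $\Sigma$ at most once, $\sl(u,T_2)$ would have to lie in $\{(0,-1),(0,0),(0,1)\}$, while Claim~\ref{claim: positively transverse} applied to the $(0,1)$-foliated torus $T_2$ forbids all three of these. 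Consequently the curve must stay entirely on one side of $T_2$. You instead continue under the assumption that crossing curves exist, and then describe the pairs-of-pants and the cylinders $e_{1/n}\to h_{1/n}$ as if they were crossing curves --- but they are not: they lie entirely in the closed annular region $(D_2\setminus\op{int}(D_1))\times S^1$, with $e_2$, $h_2$ appearing only as asymptotic limits on $T_2$. So the case ``curve confined to the inner annular region'' is not actually analyzed in your framework, and the exclusion of curves with mixed outer/inner ends is missing.

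Second, your concluding index estimate has a sign error. Multiple wrappings make $2\langle c_1(\xi,\tau),C\rangle$ very \emph{negative} (of order $-4ng$), while $-\chi(\dot F)$ is only of order $+2ng$, so the dominant behavior is $I_{ECH}(u)\sim -2ng\to -\infty$. The contradiction one obtains is $I_{ECH}(u)<0$ (as in the computations in Lemma~\ref{lemma: spectral}), not $I_{ECH}(u)\ge 2$. Finally, the appeal to the Blocking/Trapping Lemmas at $T_\partial$ is not needed for this lemma, which already restricts to curves whose projection lies in $Y''$.
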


\begin{proof}
Let $u$ be a connected holomorphic curve in $\R \times Y''$ with positive ends at ${\mathcal P}_0'' \cup {\mathcal P}_1''$. We first note that $u$ either projects to $(S\setminus \operatorname{int}(D_1))\times S^1$ or to $D_1 \times S^1$ by Lemma \ref{lemma: outside  the tube}, and in the latter case it is a holomorphic plane with a positive end at $h_0$. 
Next we show that if $u$ projects to $Y'' \setminus (D_1 \times S^1)$, then it either projects to $Y'' \setminus  (\op{int}(D_2) \times S^1)$ or to $(D_2 \setminus D_1) \times S^1$: Suppose the projection of $u$ intersects both regions. Since in $Y'' \setminus (\op{int}(D_2) \times S^1)$ we consider only orbits in the homology class of the $S^1$-fiber, $\sl(u, \partial D_2 \times S^1)$ can only be one of $(0,-1), (0,0), (0,1)$. However none of the three values is possible by Claim \ref{claim: positively transverse} because the Reeb vector field on $\partial D_2 \times S^1$ has slope $(0,1)$.

(A), (B), and (C) correspond to $u$ with $I(u)=1$ in $Y'' \setminus  (\op{int}(D_2) \times S^1)$, $(D_2 \setminus D_1) \times S^1$, and $D_1 \times S^1$, respectively.

(A) There exists an adapted almost complex structure $J$ on $\R\times(S\setminus \op{int}(D_2))\times S^1$ such that there is a bijection between gradient trajectories $\delta:\R\to S\setminus \op{int}(D_2)$ of $f$ modulo domain $\R$-translation and finite energy $J$-holomorphic cylinders $Z_\delta$ in $\R\times(S\setminus \op{int}(D_2))\times S^1$ that project to $\op{Im}(\delta)$, modulo target $\R$-translation.

(B) A perturbation of the Morse-Bott torus $\{r=r_n\}$ containing $e_{1/n}$ and $h_{1/n}$ gives the two cylinders from $e_{1/n}$ to $h_{1/n}$. The remaining curves follow from adapting Hutchings-Sullivan \cite[Theorem 3.5]{HS1}, but a few remarks are in place:
\begin{enumerate}
\item[(a)] In \cite[Theorem 3.5]{HS1}, there could be curves with more than one negative puncture or more than two positive punctures (the latter are obtained by the ``double rounding'' operations). Those curves are not considered here because the homology class of their ends is not one we are considering here; see the complete list of orbits given by Lemma~\ref{lemma: list of orbits}. 
\item[(b)] The computation in \cite{HS1} is made for perturbations of negative Morse-Bott tori and the actual computation we use is the dual one from \cite{HS2}.
\item[(c)] The work \cite{HS1} is done in the context of periodic Floer homology and requires a ``$d$-regularity'' assumption on the almost complex structure and Reeb vector field. However in \cite{HS2} the argument is extended to ECH where $d$-regularity is not needed.
\end{enumerate} 

(C) is immediate from the first paragraph of the proof.
\end{proof}

We define
$$\mathfrak{C}_0'' = \varinjlim ECC^{<L_i}(Y''_{B_i}, \Gamma_{B_i}, f_i \lambda_0 , {\mathcal P}_1'').$$
As a vector space it is generated by the orbit sets of the list \eqref{list} and its differential $\partial_{0,0}''$, which is determined by Lemma \ref{lemma: list of holomorphic curves}, is:
\begin{enumerate}
\item $\partial_{0,0}''(\gamma h_0) = \gamma$,  where $\gamma = e_-, h_-, \delta_i$,
\item $\partial_{0,0}''(e_2)= h_-$,
\item $\partial_{0,0}''(h_2) = e_-$, 
\item $\partial_{0,0}''(e_2 h_0)=  e_{1/1}+e_2+h_-h_0$, 
\item $\partial_{0,0}''(h_2 h_0)= h_{1/1}+h_2+e_-h_0$,
\item  $\partial_{0,0}''(e_{1/n} h_0) = e_{1/(n+1)}+e_{1/n}$, 
\item $\partial_{0,0}''(h_{1/n} h_0) = h_{1/(n+1)}+h_{1/n}$,
\end{enumerate}
and vanishes on all other generators.

Let $\partial_{0,0}'$ be the differential on $ECC(\op{int}(Y'), \lambda_0, {\mathcal P}_0')$.  We recall that
$$\mathfrak{C}_0 = ECC(\op{int}(Y'), \lambda_0, {\mathcal P}_0') \otimes \mathfrak{C}_0''$$
as a vector space. In the next lemma we prove that the differential splits.

\begin{lemma} \label{lemma: kunneth} 
For every generator $\bs{\gamma}_0 \otimes \bs{\gamma}_1$ of $\mathfrak{C}_0$ we have
$$\partial_{0,0}(\bs{\gamma}_0 \otimes \bs{\gamma}_1) = \partial_{0,0}'(\bs{\gamma}_0) \otimes \bs{\gamma}_1 + \bs{\gamma}_0 \otimes \partial_{0,0}''(\bs{\gamma}_1).$$
\end{lemma}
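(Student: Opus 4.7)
The plan is to reduce the lemma to a confinement statement: every $I=1$ holomorphic curve $u$ contributing to $\partial_{0,0}(\bs{\gamma}_0\otimes \bs{\gamma}_1)$, after removing covers of trivial cylinders, has a single nontrivial connected component lying either in $\R\times \op{int}(Y')$ or in $\R\times Y''$. Granted confinement, a curve of the first type has all its nontrivial ends among orbits of $\bs{\gamma}_0$ and $\bs{\gamma}_0'$, while $\bs{\gamma}_1$ appears only as trivial cylinders on both the positive and negative sides; such a curve contributes to $\partial_{0,0}'(\bs{\gamma}_0)\otimes \bs{\gamma}_1$. Symmetrically, a curve of the second type contributes to $\bs{\gamma}_0\otimes \partial_{0,0}''(\bs{\gamma}_1)$. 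Summing over all $I=1$ curves then yields the claimed formula.

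To establish confinement I would rerun the slope analysis of Lemma~\ref{lemma: spectral} in the buffer zone $N = [-1,1]_s\times T^2_{\phi,t}$ of Section~\ref{subsubsection: the buffer zone}. Suppose for contradiction that a nontrivial component $C$ of $u$ crosses $N$, so that it meets both $\R\times \op{int}(Y'\setminus N)$ and $\R\times Y''$. Pick a torus $T=\{s_0\}\times T^2\subset N$ disjoint from the asymptotic orbits of $u$ and compute $\sl(C,T)$ from the homology classes on each side. Because $[\bs{\gamma}_1]\cdot[\Sigma]=[\bs{\gamma}_1']\cdot[\Sigma]=1$ and no generator of $\mathfrak{C}$ intersects $\Sigma$ negatively, the admissible values of $\sl(C,T)$ are drawn from the same list enumerated in Cases (1)--(5) of Lemma~\ref{lemma: spectral}. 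Each nonzero option forces $C$ to be asymptotic to at least one specific orbit of slope $(n,1)$ or $(\pm 1,0)$ living in the buffer zone, and the Blocking and Trapping Lemmas of \cite{CGH0} (applied via Claim~\ref{claim: positively transverse}) exclude the remaining options, leaving only scenarios in which $C$ has ends at vertical orbits on $\partial Y''$. For these one reruns the index computation via \eqref{eqn: index formula}: combining the Conley--Zehnder indices listed in Lemma~\ref{lemma: list of orbits} with the estimate $\langle c_1(\xi,\tau),C\rangle\leq -2g$, available whenever the projection of $C$ covers a nontrivial portion of $S\setminus \op{int}(D_2)$, one obtains $I(C)<1$ as soon as $g\geq 3$, contradicting $I(u)=1$.

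The main obstacle is that, compared to the proof of $\partial_{0,1}=0$ in Lemma~\ref{lemma: spectral}, the combinatorics here is richer: $\bs{\gamma}_1$ and $\bs{\gamma}_1'$ need not agree, so more configurations of simultaneous positive and negative ends in the buffer zone must be considered, and one also has to keep track of the passive factor $\bs{\gamma}_0$ (resp.\ $\bs{\gamma}_1$) being carried along as trivial cylinders. The saving grace remains the genus hypothesis $g\geq 3$, which makes $\langle c_1(\xi,\tau),C\rangle$ sufficiently negative to dominate the index whenever $C$ genuinely spans the buffer zone. Once confinement is established, matching the resulting count with $\partial_{0,0}'\otimes \op{id}+\op{id}\otimes \partial_{0,0}''$ is immediate from the definitions of $\partial_{0,0}'$ and $\partial_{0,0}''$.
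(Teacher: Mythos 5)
Your reduction of the lemma to a confinement statement for the nontrivial component of an $I=1$ curve (with the passive orbit set carried along as trivial cylinders) is exactly right and is what the paper does, and your intent to kill the non-confined curves by the ECH index using the hypothesis $g\geq 3$ is also the paper's method. But the slope analysis at the center of your argument is misstated in a way that matters. You assert that the admissible values of $\sl(C,T)$ are drawn from \emph{the same} list as Cases (1)--(5) of Lemma~\ref{lemma: spectral}. That list was derived from the mismatched intersection numbers $[\bs{\gamma}_1]\cdot[\Sigma]=1$ and $[\bs{\gamma}_1']\cdot[\Sigma]=0$, giving slopes whose $\Sigma$-component is $0$ or $1$. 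Here, since $\partial_{0,0}$ preserves the splitting $\mathfrak{C}_0$, both $[\bs{\gamma}_1]\cdot[\Sigma]$ and $[\bs{\gamma}_1']\cdot[\Sigma]$ equal $1$ (equivalently, both $[\bs{\gamma}_0]\cdot[\Sigma]$ and $[\bs{\gamma}_0']\cdot[\Sigma]$ equal $0$), so the $\Sigma$-component of the slope across a separating torus $T'$ is forced to be zero: $\sl(u,T')=(k,0)$ and $k>0$ by positivity of intersections with the vertical Reeb orbits. This is the crucial simplification the paper exploits. It immediately disposes of the analogues of Cases (1), (3), (4), (5), whereas your list both brings in cases that cannot occur (e.g.\ slopes $(0,1)$, $(n,1)$) and omits the ones that can (namely $(k,0)$ for $k>1$). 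Because of this, your subsequent appeals to the Blocking and Trapping Lemmas are not needed here; the only surviving configurations are the analogues of Case (2), where ends at the spiraling orbits $e_{1/n}$, $h_{1/n}$ (together with possibly $h_0$ and vertical orbits in $\bs{\gamma}_1'$) appear. For those configurations the ECH index estimate goes through as you indicate, using $\langle c_1(\xi,\tau),C\rangle\leq -2g$ whenever $C$ covers the part of the closed-up surface lying over $S\setminus\operatorname{int}(D_2)$.

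So the architecture of your proposal is correct, but the slope bookkeeping is wrong in exactly the place where precision is needed; if you run the argument with the correct constraint $\sl(u,T')=(k,0)$, $k>0$, the remaining cases are those listed in (a)--(b) of the paper's proof and the index estimate finishes the job.
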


\begin{proof}
Let $u$ be a connected holomorphic curve from $\bs{\gamma}_0 \otimes \bs{\gamma}_1$ to $\bs{\gamma}'_0 \otimes \bs{\gamma}'_1$ that contributes to $\partial_{0,0}$. 
We show that the projection of $u$ cannot intersect a torus $T' \subset Y'$ that is parallel to $\partial Y'$, foliated by Reeb orbits, and separates $\bs{\gamma}_0, \bs{\gamma}_0'$ from $\bs{\gamma}_1, \bs{\gamma}_1'$. Suppose this is not the case. Since $[\bs{\gamma}_0] \cdot [\Sigma] = [\bs{\gamma}_0'] \cdot [\Sigma]=0$, we have $\sl(u,T')= (k,0)$, where $k>0$ by the positivity of intersections with the vertical Reeb orbits in $\partial Y'$. Since $h_0$ cannot be at a negative end by an argument similar to that of Case (4) of Lemma~\ref{lemma: spectral}, the remaining possibilities for $\bs{\gamma}_1$ and $\bs{\gamma}_1'$ are:
\be 
\item[(a)] $\bs{\gamma}_1$ consists of $e_{1/n}$ or $h_{1/n}$ where $n>0$ and $\bs{\gamma}_1'$ consists of a vertical orbit, $e_{1/n'}$, or $h_{1/n'}$ where $n>n'$; and
\item[(b)] $\bs{\gamma}_1$ consists of $h_0$ and a vertical orbit, $e_{1/n}$, or $h_{1/n}$ where $n>0$ and $\bs{\gamma}_1'$ consists of a vertical orbit,  $e_{1/n'}$, or $h_{1/n'}$ where $n+1>n'$.
\ee
Both (a) and (b) can be ruled out as in Case (2) of Lemma~\ref{lemma: spectral} by considering the ECH index of $u$.
\end{proof}

Finally we compute $H_*(\mathfrak{C}_0, \partial_{0,0})$:

\begin{lemma}\label{lemma: suture} 
$H_*(\mathfrak{C}_0,\partial_{0,0})= {ECH}(\operatorname{int}(Y'), \lambda_0, \mathcal{P}_0')\otimes \langle [e_{1/1}], [h_{1/1}]\rangle.$
\end{lemma}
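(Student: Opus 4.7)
The plan is to combine Lemma \ref{lemma: kunneth} with a direct computation. Lemma \ref{lemma: kunneth} identifies $\mathfrak{C}_0$, as a chain complex, with the tensor product $ECC(\op{int}(Y'), \lambda_0, \mathcal{P}_0') \otimes \mathfrak{C}_0''$. Since we work over the field $\F$ and both homology and tensor product commute with the direct limits defining these chain complexes, the K\"unneth formula yields
\begin{equation*}
H_*(\mathfrak{C}_0, \partial_{0,0}) \simeq ECH(\op{int}(Y'), \lambda_0, \mathcal{P}_0') \otimes H_*(\mathfrak{C}_0'', \partial_{0,0}'').
\end{equation*}
It thus remains to establish $H_*(\mathfrak{C}_0'', \partial_{0,0}'') \simeq \langle [e_{1/1}], [h_{1/1}] \rangle$, which I would do by direct computation from the explicit formulas for $\partial_{0,0}''$ listed just before Lemma \ref{lemma: kunneth}.

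The first observation is that the subcomplexes $\langle \delta_i, \delta_i h_0 \rangle$, $\langle e_-, e_- h_0 \rangle$, and $\langle h_-, h_- h_0 \rangle$ are acyclic, because in each case the $h_0$-multiplied generator maps to its partner. To separate the remaining generators into decoupled pieces, I would perform the change of basis $\tilde{e}_2 = e_2 + h_- h_0$ and $\tilde{h}_2 = h_2 + e_- h_0$. A short check shows $\tilde{e}_2$ and $\tilde{h}_2$ are closed, and that $\partial_{0,0}''(e_2 h_0) = \tilde{e}_2 + e_{1/1}$ and $\partial_{0,0}''(h_2 h_0) = \tilde{h}_2 + h_{1/1}$. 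After this change of basis, $\mathfrak{C}_0''$ decomposes as the direct sum of the three acyclic subcomplexes above, an \emph{elliptic} subcomplex $\mathfrak{C}_E$ generated by $\tilde{e}_2$, $e_{1/n}$, $e_2 h_0$, $e_{1/n} h_0$ for $n \geq 1$, and the symmetric \emph{hyperbolic} subcomplex $\mathfrak{C}_H$.

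In $\mathfrak{C}_E$, every $\tilde{e}_2$ and $e_{1/n}$ is closed, and the bridges $e_2 h_0, e_{1/n} h_0$ identify them all in homology, so $H_*(\mathfrak{C}_E)$ is at most one-dimensional with generator $[e_{1/1}]$. To verify that this class is nonzero, I would check by an elementary recursion that no nontrivial linear combination of bridges is closed: the coefficient of $\tilde{e}_2$ in any putative boundary equals the coefficient of $e_2 h_0$, and then the relations $\partial_{0,0}''(e_{1/n} h_0) = e_{1/n} + e_{1/(n+1)}$ force all remaining coefficients to vanish inductively. The argument for $\mathfrak{C}_H$ is entirely symmetric and yields $[h_{1/1}]$. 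The main subtlety I anticipate is bookkeeping in the direct limit, since $\mathfrak{C}_0''$ is a colimit over $L_i$; however, each generator $e_{1/n}, h_{1/n}, e_{1/n} h_0, h_{1/n} h_0$ appears at a finite stage, and the homology generators $[e_{1/1}], [h_{1/1}]$ stabilize as soon as $e_{1/1}$ and $h_{1/1}$ are included, so the above computation is essentially the same at each finite stage and passes to the colimit without change.
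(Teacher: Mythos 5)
Your proof is correct, and it takes a genuinely different route from the paper's. You perform a change of basis $\tilde e_2 = e_2 + h_- h_0$, $\tilde h_2 = h_2 + e_- h_0$ so that the whole complex $\mathfrak{C}_0''$ decomposes as a \emph{direct sum} of easily-understood subcomplexes: acyclic pieces $\langle\delta_i, \delta_i h_0\rangle$, $\langle e_-, e_- h_0\rangle$, $\langle h_-, h_- h_0\rangle$, and the two ``ladder'' complexes $\mathfrak{C}_E$ and $\mathfrak{C}_H$ each contributing a single homology class. The paper instead observes that the orbit sets $e_{1/n}, h_{1/n}, e_{1/n}h_0, h_{1/n}h_0$ form a \emph{subcomplex} $\mathfrak{C}_0'''$ with homology $\langle [e_{1/1}], [h_{1/1}]\rangle$, identifies the quotient $\mathfrak{C}_0''/\mathfrak{C}_0'''$ with the mapping cone of the isomorphism $h_0\gamma\mapsto\gamma$ on the Morse complex of $f$ (hence acyclic), and reads off the answer from the long exact sequence. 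The two approaches are essentially equivalent in content and difficulty; yours is somewhat more explicit and hands-on while the paper's is slightly more conceptual, packaging the ``acyclic'' part of the computation into the recognition of a mapping cone of an isomorphism rather than into an explicit basis change. Your treatment of the direct-limit bookkeeping is right, and the key point sustaining it --- that whenever $e_{1/n}h_0$ has action below the threshold $L_i$ then so does $e_{1/(n+1)}$, by Stokes' theorem applied to the holomorphic curve, so the finite-stage truncations are genuine subcomplexes with stable one-dimensional homology --- is worth making explicit.
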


\begin{proof} 
By Lemma \ref{lemma: kunneth} and the K\"unneth formula we have
$$H_*(\mathfrak{C}_0, \partial_{0,0}) \simeq ECH(\op{int}(Y'), \lambda_0, {\mathcal P}_0') \otimes H_*(\mathfrak{C}_0'', \partial_{0,0}''),$$
and therefore the proof of the lemma reduces to the computation of $H_*(\mathfrak{C}_0'', \partial_{0,0}'')$.

First we observe that the orbit sets $e_{1/n}, h_{1/n}, e_{1/n}h_0, h_{1/n}h_0$, $n \in \Z_{>0}$, form a subcomplex $(\mathfrak{C}_0''', \partial_{0,0}''')$ with homology $H_*(\mathfrak{C}_0''', \partial_{0,0}''')= \langle [e_{1/1}, h_{1/1}] \rangle$. Note that $[e_{1/1}] =[e_{1/n}]$  and $[h_{1/1}] =[h_{1/n}]$ for every $n\in \Z_{>0}$.
  
The quotient complex $\mathfrak{C}_0''/\mathfrak{C}_0'''$ can be identified with the mapping cone of 
$$h_0 C_*(S\setminus \op{int}(D_2), \partial S) \xrightarrow{h_0 \gamma \mapsto \gamma} C_*(S\setminus \op{int}(D_2), \partial S),$$
where $C_*(S\setminus \op{int}(D_2), \partial S)$ is the Morse complex of a Morse perturbation of the Morse-Bott function $f$. This mapping cone is clearly acyclic because the map $h_0 \gamma \mapsto \gamma$ is an isomorphism. The lemma then follows. 
\end {proof}

\subsubsection{Completion of the proof of Theorem~\ref{thm: ECH}}

We now complete the computation of $\widehat{ECH} (Y,\xi | \Sigma)$. By Lemma \ref{lemma: spectral}, the chain complex $\mathfrak{C}$ can be written as the cone of $\mathfrak{C}_1 \xrightarrow{\bdry_{1,0}} \mathfrak{C}_0$.  Using the corresponding exact sequence on homology and Lemmas \ref{lemma: zero}, \ref{lemma: suture} and \ref{lemma: direct limit} we obtain that:
$$\widehat{ECH}(Y,\xi | \Sigma)\simeq  ECH(\operatorname{int}(Y'), \lambda_0, \mathcal{P}_0')\otimes \langle[e_{1/1}],[h_{1/1}]\rangle.$$

Let $\widetilde{Y}'$ be the manifold obtained by excising a thin collar $\mathcal{C}$ of $\partial Y'$ so that $\partial \widetilde{Y}'$ is foliated by orbits of $R_{\lambda_0}$ of irrational slope. We assume that all the orbits of $R_{\lambda_0}$ in $\mathcal{C}$ intersect $\Sigma$ many times, so that
$$ECH(\op{int}(Y'), \lambda_0, {\mathcal P}_0')\simeq ECH(\widetilde{Y}', \lambda_0, {\mathcal P}_0').$$
We also consider a contact form $\widetilde{\lambda}$ on $\widetilde{Y}'$ obtained from $\lambda$ by a modification on a slight enlargement $\mathcal{C}'$ of $\mathcal{C}$ such that:
\begin{itemize}
\item $R_{\widetilde{\lambda}}=R_{\lambda_0}$ near $\partial \widetilde{Y}'$;
\item $\widetilde{\lambda}=\lambda$ on $\op{int}(Y')\setminus \op{int}(\mathcal{C}')$  which contains all the orbit sets in ${\mathcal P}_0'$; and
\item all Reeb orbits in $\op{int}(\mathcal{C}')$ intersect $\Sigma$ many times.
\end{itemize}
Then $ECH(\op{int}(Y'), \lambda, {\mathcal P}_0') \simeq ECH(\widetilde{Y}', \widetilde{\lambda}, {\mathcal P}_0')$. Moreover, 
$$ECH(\widetilde{Y}', \widetilde{\lambda}, {\mathcal P}_0')\simeq ECH(\widetilde{Y}', \lambda_0, {\mathcal P}_0'),$$
by \cite[Proposition 7.2.1]{CGH0} and
$$ECH(\operatorname{int}(Y'), \lambda, \mathcal{P}_0') \simeq ECH (M,\Gamma, \xi)$$ 
by \cite[Theorem 1.9]{CGHH}. Putting the isomorphisms together yields 
$$ECH(\operatorname{int}(Y'), \lambda_0, \mathcal{P}_0') \simeq ECH (M,\Gamma, \xi).$$
In \cite{CGHH}, Theorem 1.9 is proven modulo (i) the invariance of sutured ECH with respect to the contact form and the almost complex structure and (ii) the existence of cobordism maps in sutured ECH that are similar to the ones given by Hutchings-Taubes \cite{HT3} in the closed case. The invariance (i) and the existence of cobordism maps with good properties (see \cite[Section 10.4]{CGHH} for the precise requirements)  (ii) are both given in  \cite[Theorem 10.2.2]{CGH0}. 

Therefore we obtain 
$$\widehat{ECH} (Y,\xi | \Sigma) \simeq ECH (M,\Gamma, \xi)\otimes \langle [e_{1/1}], [h_{1/1}]\rangle,$$
completing the proof of Theorem \ref{thm: ECH}.

\section{Decomposition along $\mbox{Spin}^c$-structures}\label{section: decomposition}

In this section we describe how the isomorphism between sutured Floer homology and sutured ECH behaves with respect to the decomposition along relative $\mbox{Spin}^c$-structures. Let $(M, \Gamma, \xi)$ be a sutured contact manifold  and let $\psi \colon R_+ \to R_-$
be a diffeomorphism which, near the boundary, coincides with the identification induced by the coordinates in the neighborhood $U(\Gamma)$. Let $i_\pm \colon R_\pm \to M$ be the natural inclusions and let $K_\psi \subset H_1(M)$ be given by
$$K_\psi =\operatorname{Im} (i_{-*}\circ \psi_* -i_{+*}).$$
Let $M_\psi : = M/ (x\sim \psi(x))$ be the $3$-manifold with torus boundary obtained by gluing $R_+$ to $R_-$ using $\psi$, and which contains a distinguished surface $R$ corresponding to $R_+$ and $R_-$. Using the Mayer-Vietoris sequence one computes that
$$H_1(M_\psi;\Z) \simeq (H_1(M;\Z)/K_\psi) \oplus \Z,$$
where the $\Z$-factor is generated by a cycle $\gamma$ that intersects $R$ once. 

\begin{thm}\label{thm: sutured manifoldbis}
Let $(M,\Gamma,\xi)$ be a sutured contact $3$-manifold and $\psi : R_+ \to R_-$ a  diffeomorphism as above. Then, for every $A \in H_1(M; \Z)$,
$$\bigoplus_{c \in A+ K_\psi} ECH(M,\Gamma,\xi, c)\simeq \bigoplus_{c \in A+ K_\psi} SFH(-M,-\Gamma, \mathfrak{s}_\xi+PD(c)).$$
\end{thm}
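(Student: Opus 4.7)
The strategy is to refine the proof of Theorem \ref{thm: sutured manifold} by tracking homology classes and $\mbox{Spin}^c$-structures throughout, exploiting the fact that every step in that proof is compatible with the natural splittings.

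\textbf{Step 1 (Relating $H_1(M)$ and $H_1(Y)$).} First I would check, via a Mayer--Vietoris argument for $Y=Y' \cup Y''$ and the construction of the closure, that the inclusion $i \colon M \hookrightarrow Y$ induces $i_* \colon H_1(M;\Z) \to H_1(Y;\Z)$ with $\ker(i_*) = K_\psi$. Elements of $K_\psi$ become trivial because $\widetilde\psi$ identifies $R_+$ with $R_-$; conversely the fact that $\widetilde\psi$ is the identity on $S \times \{\pm 1\}$ prevents any further collapse. Equivalently, the affine map $\operatorname{Spin}^c(M,\Gamma) \to \operatorname{Spin}^c(Y)$, $\widetilde{\mathfrak{s}} \mapsto \mathfrak{s}$, has fibers which are cosets of $K_\psi$ under the action of $H_1(M;\Z)$.

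\textbf{Step 2 (Refined form of Theorem \ref{thm: ECH}).} Every construction in Section~\ref{sec: proof of the main technical theorem} respects the splitting of chain complexes by homology class, since holomorphic curves preserve it. In particular, the decomposition $\mathfrak{C}=\mathfrak{C}_0\oplus \mathfrak{C}_1$ refines to $\mathfrak{C}(A)=\mathfrak{C}_0(A)\oplus\mathfrak{C}_1(A)$ for each $A\in H_1(Y;\Z)$ with $A\cdot[\Sigma]=1$; the vanishing $\partial_{0,1}=0$ of Lemma \ref{lemma: spectral} holds class-by-class; and the K\"unneth splitting of Lemma \ref{lemma: kunneth} takes the form
$$\mathfrak{C}_0(A) \simeq \bigoplus_{c : i_*(c) = A - \alpha} ECC(\operatorname{int}(Y'),\lambda_0,c) \otimes \mathfrak{C}_0''(\alpha),$$
where $\alpha \in H_1(Y'')$ is the common class of both $e_{1/1}$ and $h_{1/1}$ (which is why the tensor factor is two-dimensional). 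Carrying this refinement through the rest of the argument in Section~\ref{sec: proof of the main technical theorem}, together with the topological invariance of sutured ECH along homology classes, yields
\begin{equation}\label{eqn: refined ECH}
\widehat{ECH}(Y,\xi,A) \simeq \bigoplus_{c \in c_0 + K_\psi} ECH(M,\Gamma,\xi,c) \oplus \bigoplus_{c \in c_0 + K_\psi} ECH(M,\Gamma,\xi,c)[1],
\end{equation}
where $c_0$ is any lift of $A - \alpha$ to $H_1(M;\Z)$.

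\textbf{Step 3 (Refined Heegaard Floer side).} I would next apply Lekili's theorem \cite[Theorem 24]{Le} and \cite[Corollary 20]{Le} $\mbox{Spin}^c$-structure by $\mbox{Spin}^c$-structure. His arguments (using the surgery exact triangle and the vanishing of the $U$-map) take place inside the $\mbox{Spin}^c$-summand corresponding to $\mathfrak{s}$ with $\langle c_1(\mathfrak{s}),[\Sigma]\rangle = \chi(\Sigma)$ and respect the decomposition $SFH(-M,-\Gamma) = \bigoplus_{\widetilde{\mathfrak{s}}} SFH(-M,-\Gamma,\widetilde{\mathfrak{s}})$: each $\widetilde{\mathfrak{s}}$ that extends to $\mathfrak{s}$ over $Y$ contributes, and by Step 1 the extending set is a coset $c_0 + K_\psi$. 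Combined with \cite[Theorem 1.2.1]{CGH1} (which is an isomorphism within each $\mbox{Spin}^c$-structure), this gives
\begin{equation}\label{eqn: refined HF}
\widehat{ECH}(Y,\xi,A) \simeq \bigoplus_{c \in c_0 + K_\psi} SFH(-M,-\Gamma,\mathfrak{s}_\xi+\mathrm{PD}(c)) \oplus \bigoplus_{c \in c_0 + K_\psi} SFH(-M,-\Gamma,\mathfrak{s}_\xi+\mathrm{PD}(c))[1].
\end{equation}

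\textbf{Step 4 (Conclusion).} Comparing \eqref{eqn: refined ECH} and \eqref{eqn: refined HF} and cancelling the degree-shifted copy (which occurs symmetrically on both sides, coming respectively from the $\langle[e_{1/1}],[h_{1/1}]\rangle$ factor on the ECH side and from the cone of the $U$-map on the HF side) yields the desired identification along the coset $A + K_\psi$.

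\textbf{Main obstacle.} The main delicate point is the refinement in Step 3: one must verify that Lekili's isomorphism in \cite[Theorem 24]{Le} (proved originally at the level of the full $|\Sigma|$ summand) actually identifies $HF^+(-Y,\mathfrak{s})$ with the sum of the $SFH(-M,-\Gamma,\widetilde{\mathfrak{s}})$'s over extensions of $\mathfrak{s}$; this is not emphasized in \cite{Le} but should follow by tracking $\mbox{Spin}^c$-structures through his surgery long exact sequence, noting that every cobordism map in that argument preserves $\mbox{Spin}^c$-structures. Verifying the matching of $\alpha$ with the canonical $\mbox{Spin}^c$-structure of $\xi$, so that the coset $c_0 + K_\psi$ on the two sides really indexes the same cosets of relative $\mbox{Spin}^c$-structures, is the other bookkeeping task.
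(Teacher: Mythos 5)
Your proposal is correct and follows essentially the same route as the paper: reduce to connected sutures, refine Theorem~\ref{thm: ECH} by tracking homology classes through the chain complex decomposition, and refine the Heegaard Floer side by tracking relative $\mbox{Spin}^c$-structures through Lekili's results (\cite[Theorem 21, Theorem 24]{Le}) and the closed-manifold isomorphism of \cite{CGH1}. The subtlety you flag in your ``main obstacle'' paragraph is exactly the one the paper addresses, with the paper's resolution being that the passage from Lekili's Theorem 21 to Theorem 24 is implemented by a chain-level isomorphism that automatically respects $\mbox{Spin}^c$-structures; your Step 1 (identifying $\ker i_*$ with $K_\psi$) corresponds to the Mayer--Vietoris computation the paper carries out just before stating the theorem.
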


\begin{proof}
First assume that $\Gamma$ is connected.  Let $(Y_\psi, \xi_\psi)$ be the contact closure of $(M, \Gamma, \xi)$ as defined in Section \ref{subsection: construction of $Y$}. Here it is convenient to record the gluing diffeomorphism $\psi$ in the notation and to distinguish $\xi$ from its extension. For every $A \in H_1(M;\Z)$ we denote by $[A]$ its image in $H_1(M; \Z)/K_\psi$ and define $\overline{A}= [A] + \gamma$. Here we identify $H_1(M_\psi; \Z)$ with its image in $H_1(Y_\psi; \Z)$ because the map induced by the inclusion is injective. An inspection of the proof of Theorem \ref{thm: ECH} gives the following refinement of the isomorphism \eqref{eq: ECH}:
$$\bigoplus_{c \in A+K_\psi}(ECH(M, \Gamma, \xi,c) \oplus ECH(M, \Gamma, \xi,c)[1]) \simeq ECH(Y_\psi, \xi_\psi, \overline{A}).$$
Similarly, by Proposition 19, Corollary 20, Theorem 21, and Theorem 24  of Lekili \cite{Le} (note that in \cite{Le} $M$ is denoted $Y$, while $Y_\psi$ is denoted $Y_n$), we have
   \begin{gather*}
     \bigoplus_{c \in A+K_\psi} (SFH(-M, -\Gamma, \mathfrak{s}_{\xi}+PD(c)) \oplus SFH(-M, -\Gamma, \mathfrak{s}_{\xi}+PD(c))[1]) \\  \simeq ~~~ \widehat{HF}(-Y_\psi, \mathfrak{s}_{\xi_\psi}+ PD(\overline{A})).
   \end{gather*}
Although \cite[Theorem 24]{Le} does not explicitly mention the decomposition of $\widehat{HF}(M,\Gamma)$ along relative $\mbox{Spin}^c$-structures, the proof first uses \cite[Theorem 21]{Le} that keeps track of them, followed by the identification of $SFH(M,\Gamma)$ and the ad hoc homology $QFH'(Y_\psi)$ where they are not carefully tracked. The only thing to point out is that this second step is actually done by an isomorphism between chain complexes that automatically respects $\mbox{Spin}^c$-structures.

Now by the isomorphism for closed manifolds \cite{CGH1},
$$\widehat{HF}(-Y_\psi, \mathfrak{s}_{{\xi}_\psi}+PD(\overline{A}) )\simeq \widehat{ECH}(Y_\psi ,\xi_\psi, \overline{A}),$$
and this concludes the proof of the theorem if $\Gamma$ is connected. The general case is obtained by observing that the isomorphisms in the proof of Lemma \ref{lemma: reduction to contact sutures} behave well with respect to homology classes and relative $\mbox{Spin}^c$-structures.
\end{proof}

We now turn our attention to knot Floer homology.

\begin{cor}\label{cor: isomorphism for knots}
Let $K$ be a null-homologous knot in a closed manifold $M$ and $\xi$ a contact structure that is compatible with the sutured manifold $(M(K), \Gamma_K)$. Then
$$ECH(M(K), \Gamma_K, \xi, A) \simeq \widehat{HFK}(-M, -K, \underline{\mathfrak{s}}_\xi + \operatorname{PD}(i_*(A))),$$
where $A \in H_1(M)$,  $\mathfrak{s}_\xi$ is the canonical relative Spin$^c$-structure of $\xi$, $\underline{\mathfrak{s}}_\xi$ is its extension to a Spin$^c$-structure on $M_0(K)$, and $i_* \colon H_1(M(K)) \to H_1(M_0(K))$ is the isomorphism induced by the inclusion $i \colon M(K) \to M_0(K)$.

If $K$ is fibered and $\hh$ is an area-preserving representative of the monodromy with zero flux, then
$$PFH^\sharp(\hh, A) \simeq \widehat{HFK}(-M, -K, \underline{\mathfrak{s}}_\xi + \operatorname{PD}(i_*(A))).$$
\end{cor}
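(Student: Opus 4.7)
The plan is to deduce both isomorphisms from Theorem~\ref{thm: sutured manifoldbis} combined with Juh\'asz's refined tautological identification \eqref{eqn: sutured-knot isomorphism in HF} for the knot complement. The key observation is that, for the sutured manifold $(M(K),\Gamma_K)$ associated to a null-homologous knot, the subgroup $K_\psi \subset H_1(M(K);\Z)$ appearing in Theorem~\ref{thm: sutured manifoldbis} vanishes; this promotes the coset-level isomorphism furnished by that theorem to an isomorphism indexed by individual homology classes.

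First I would unpack the hypotheses of Theorem~\ref{thm: sutured manifoldbis} in this setting. The regions $R_\pm$ are the two annular components of $\partial M(K) \setminus \Gamma_K$, and the core of each is a longitude of $K$ on $\partial M(K)$. Since $K$ is null-homologous with Seifert surface $S$, the longitude bounds $S \cap M(K)$ inside $M(K)$, so both $i_{+*}$ and $i_{-*}$ kill $H_1(R_\pm;\Z)\simeq \Z$ on the nose. In particular $K_\psi=\operatorname{Im}(i_{-*}\circ \psi_* -i_{+*})=0$ regardless of the gluing $\psi$. Applying Theorem~\ref{thm: sutured manifoldbis} with these trivial cosets then yields, for every $A \in H_1(M(K);\Z)$,
$$ECH(M(K),\Gamma_K,\xi, A)\simeq SFH(-M(K),-\Gamma_K,\mathfrak{s}_\xi+\operatorname{PD}(A)).$$

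Next I would invoke the refinement of Juh\'asz's isomorphism \eqref{eqn: sutured-knot isomorphism in HF} that tracks relative Spin$^c$-structures, as recorded in Section~\ref{ssec: knot Floer and sutures}. This gives
$$SFH(-M(K),-\Gamma_K,\mathfrak{s}_\xi+\operatorname{PD}(A)) \simeq \widehat{HFK}(-M,-K,\underline{\mathfrak{s}_\xi+\operatorname{PD}(A)}),$$
where the extension $\mathfrak{s}\mapsto \underline{\mathfrak{s}}$ is characterized by the matching Chern-class pairing with the capped-off Seifert surface $\widehat{\Sigma}$. A short Mayer--Vietoris computation shows that $i_*\colon H_1(M(K)) \to H_1(M_0(K))$ is an isomorphism for a null-homologous knot (the kernel of the surgery-gluing map on $H_1(\partial M(K))$ is generated by the longitude, which is already null-homologous in $M(K)$), and naturality of the extension procedure with respect to the $H_1$-action then identifies $\underline{\mathfrak{s}_\xi+\operatorname{PD}(A)}$ with $\underline{\mathfrak{s}}_\xi+\operatorname{PD}(i_*(A))$. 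Chaining the two isomorphisms above proves the first claim.

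The second claim, concerning $PFH^\sharp(\hh,A)$ for fibered knots, then follows immediately by substituting Lemma~\ref{ECH and PFH}, which identifies $ECH(M(K),\Gamma_K,\xi,A)$ with $PFH^\sharp(\hh,A)$ whenever $\xi$ is a small perturbation of the fiber tangent planes and $\hh$ is an area-preserving zero-flux representative of the monodromy. I expect the main obstacle to be conceptual rather than technical: one must verify that the null-homologous hypothesis is precisely what forces $K_\psi=0$, and must carefully track how the various $H_1$-actions interact with the Spin$^c$ extensions so that every step preserves the splitting into relative Spin$^c$-structures. All the serious geometric and analytical content is already embedded in Theorem~\ref{thm: sutured manifoldbis} and Lemma~\ref{ECH and PFH}, so what remains is essentially bookkeeping.
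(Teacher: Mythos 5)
Your overall strategy is the same as the paper's: apply Theorem~\ref{thm: sutured manifoldbis}, observe that $K_\psi=0$ in the knot-complement setting, feed this into Juh\'asz's identification of $SFH(M(K),\Gamma_K)$ with $\widehat{HFK}(M,K)$, and invoke Lemma~\ref{ECH and PFH} for the fibered-knot statement. However, your justification that $K_\psi=0$ contains a genuine error. You assert that the cores of $R_\pm$ are longitudes of $K$ and then use the Seifert surface to conclude $i_{\pm*}=0$. But $\Gamma_K$ consists of two \emph{meridians}, so the two annular components $R_\pm \subset \partial M(K)\setminus\Gamma_K$ have cores isotopic to a \emph{meridian}, not a longitude. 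The meridian is typically not null-homologous in $M(K)$ (for $K\subset S^3$ it generates $H_1(M(K))\simeq\Z$), so the maps $i_{\pm*}\colon H_1(R_\pm)\to H_1(M(K))$ are not zero, and your stated reason for $K_\psi=0$ fails.

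The conclusion $K_\psi=0$ is nonetheless correct, and the paper's argument is both simpler and more robust: since $R_+$ and $R_-$ are parallel annuli on the boundary torus, their cores represent the same class in $H_1(M(K))$ under the respective inclusions, and the gluing $\psi$ (required to be the identity near $\partial R_\pm$) carries the core class of $R_+$ to that of $R_-$. Hence $i_{-*}\circ\psi_*-i_{+*}$ is identically zero on $H_1(R_+)$, for any such $\psi$ and without invoking the null-homologous hypothesis at this step. (The null-homologous hypothesis is still needed elsewhere: for the Seifert framing, for $M_0(K)$, and for the $\mathrm{Spin}^c$-extension $\underline{\mathfrak{s}}_\xi$.) Once this step is corrected, the remainder of your argument --- the Juh\'asz identification, the compatibility of $H_1$-actions with the extension to $M_0(K)$, and the reduction to Lemma~\ref{ECH and PFH} for fibered knots --- is sound and matches the paper.
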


\begin{proof}
Knot Floer homology can be identified with the sutured Heegaard Floer homology of the knot complement with two meridian sutures. Then the corollary follows from Theorem \ref{thm: sutured manifoldbis} by observing that, when $R_+$ and $R_-$ are annuli, $K_\psi = \{ 0 \}$ for every choice of gluing diffeomorphism $\psi$. The statement about periodic Floer homology follows from that of ECH and Lemma \ref{ECH and PFH}.
\end{proof}

\begin{proof}[Proof of Corollaries~\ref{cor: knots} and~\ref{cor: fibred knots}]
Corollaries~\ref{cor: knots} and~\ref{cor: fibred knots} are  just weaker formulations of Corollary \ref{cor: isomorphism for knots}.
\end{proof}

\begin{proof}[Proof of Corollary \ref{cor: knot monopole}]
  The proof is exactly the same as the proof of Corollary \ref{cor: isomorphism for knots}, which is based only on formal properties that holds also for monopole Floer homology; see \cite[Section 5]{KM}.
\end{proof}

\section{A dynamical characterization of product sutured manifolds}\label{section: sutured product}

In this section we prove dynamical results which were announced in \cite{CH}, as corollaries of Theorem \ref{thm: sutured manifold}.
The following answers a question of Pardon.

\begin{thm}\label{thm: characterization}
If $(M,\Gamma, \xi =\ker \lambda )$ is a taut balanced sutured contact manifold whose Reeb vector field $R_\lambda$ has no orbit, then $(M,\xi)$ is a product tight sutured contact manifold $(S\times [0,1],\xi)$, where $\xi$ is $[0,1]$-invariant.  Moreover, if $S$ is planar and $R_\lambda$ has no orbit, then every orbit of $R_\lambda$ flows from $S\times \{0\}$ to $S\times \{1\}$; in particular $R_\lambda$ has no trapped orbits.
\end{thm}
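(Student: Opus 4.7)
The plan is to reduce the first assertion to a structural theorem on sutured Floer homology via Theorem \ref{thm: sutured manifold}, to upgrade the resulting topological product to a contact product via Giroux torsion, and finally to treat the planar case by a Brouwer-type argument on a transverse section. The hypothesis that $R_\lambda$ has no closed orbit forces the sutured ECH chain complex $ECC(M,\Gamma,\lambda,J)$ to be generated only by the empty orbit set, so that $ECH(M,\Gamma,\xi)\simeq\F$. Theorem \ref{thm: sutured manifold} then yields $SFH(-M,-\Gamma)\simeq\F$. Because $(M,\Gamma)$ is taut and balanced, Juh\'asz's characterization of product sutured manifolds by one-dimensional $SFH$ shows that $(M,\Gamma)$ is topologically a product $S\times[0,1]$ with $R_\pm(\Gamma)=S\times\{\pm 1\}$.

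To promote this topological product to a contact product, I would argue that $\xi$ is hyper-tight, and therefore tight, because the absence of Reeb orbits a fortiori excludes contractible ones. A positive Giroux torsion layer inside $(M,\xi)$ would trap closed Reeb orbits of any adapted Reeb vector field by the standard argument on an invariant torus, so $\xi$ must also have vanishing Giroux torsion. The Giroux--Honda classification of tight contact structures on $S\times[0,1]$ with fixed convex boundary and zero torsion then identifies $\xi$ up to isotopy rel boundary with the unique $[0,1]$-invariant contact structure whose dividing sets coincide with $\Gamma$.

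For the planar refinement, I would proceed by contradiction. Suppose some orbit of $R_\lambda$ is forward-trapped in $\op{int}(M)$. Since the Reeb flow preserves the contact volume $\lambda\wedge d\lambda$, Poincar\'e recurrence applied to the compact invariant $\omega$-limit set of this orbit produces a recurrent point $p\in\op{int}(M)$. Choose a small embedded transverse disk $D$ through $p$; the first-return map $P$ defined on an open neighborhood of $p$ in $D$ is an orientation- and area-preserving local homeomorphism (it preserves $d\lambda|_D$), and recurrence of $p$ under $R_\lambda$ produces a recurrent orbit of $P$ accumulating at $p$. The hypothesis that $R_\lambda$ has no closed orbit rules out periodic points of $P$. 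A Brouwer--Franks-type theorem on area-preserving planar dynamics---in the plane, an orientation-preserving homeomorphism without fixed points admits no nonwandering points---then yields a contradiction, provided $P$ can be realized as (the restriction of) such a planar homeomorphism. This realization is precisely where the planarity hypothesis on $S$ enters, since it allows one to situate the relevant transversal inside a planar surface and to extend or compactify $P$ appropriately. A symmetric argument with the reversed flow rules out backward-trapped orbits, so every orbit of $R_\lambda$ runs from $S\times\{0\}$ to $S\times\{1\}$. The principal technical obstacle is the construction of a global planar transversal and the extension of the locally defined first-return map to a planar homeomorphism to which Brouwer--Franks genuinely applies; the more formal Floer-theoretic steps in the first two paragraphs are comparatively routine.
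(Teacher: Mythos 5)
Your first two paragraphs follow the paper's proof fairly closely: from no orbits you get $ECH(M,\Gamma,\xi)\simeq\F$, hence $SFH(-M,-\Gamma)\simeq\F$ by Theorem~\ref{thm: sutured manifold}, and Juh\'asz's rank-one characterization then gives a topological product. Your ``hypertight implies tight'' step is the same Hofer argument the paper uses, though you should make explicit that it only applies in the sutured setting because \cite[Proposition~5.20]{CGHH} prevents holomorphic disks from escaping along the boundary (the paper also uses this to get irreducibility before invoking Juh\'asz).

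For $[0,1]$-invariance you take a detour through Giroux torsion and an appeal to a ``Giroux--Honda classification''; the paper's route is more direct and avoids the dependence on a torsion-free classification theorem whose hypotheses you have not matched. Since $(M,\Gamma)\simeq(S\times[0,1],\partial S\times[0,1])$ is a sutured handlebody, one decomposes along compression disks $a_i\times[0,1]$ for a basis of arcs $\{a_i\}$ of $S$; each $\partial(a_i\times[0,1])$ meets the dividing set $\partial S\times\{1/2\}$ in two points, so the handlebody is product disk decomposable, and convex surface theory yields a unique tight contact structure, namely the $[0,1]$-invariant one. The torsion discussion is neither needed nor quite sufficient on its own.

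The real gap is in the planar case, and you have correctly identified it. A Brouwer--Franks argument needs an orientation-preserving, area-preserving homeomorphism of an open disk or plane; what you have is a first-return map defined only near a recurrent point, and there is no canonical global planar transversal to $R_\lambda$, nor any canonical extension of the local return map. Planarity of $S$ does not by itself supply such an extension. The paper sidesteps this entirely: it embeds the product in an open book, and uses the Eliashberg--Hofer foliation of the symplectization when $S$ is a disk, respectively Wendl's generalization for planar pages, to produce a foliation by holomorphic curves asymptotic to the binding. Since $R_\lambda$ has no closed orbits there is no breaking, the pages are globally transverse to the Reeb flow, and one obtains a first-return map defined on a whole page; this is precisely the globally defined transversal your argument lacks, and it is exactly where the planarity hypothesis is used. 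To make your Brouwer--Franks route work you would essentially have to reprove this global transversality, at which point the recurrence argument becomes unnecessary.
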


\begin{proof} 
If $R_\lambda$ has no orbit, then $ECH (M,\Gamma, \lambda )\simeq \F \langle[\emptyset]\rangle$ and hence 
$$HF(-M,-\Gamma )\simeq \F.$$ 
Moreover, by Hofer \cite{Hof} (applied without modification to our sutured situation thanks to the control on holomorphic curves given by \cite[Proposition 5.20]{CGHH}), $M$ is irreducible and $\xi$ is tight. By  \cite[Theorem~9.7]{Ju} and the irreducibility of $M$, $(M,\Gamma)$ is a product sutured manifold $(S\times [0,1], \partial S \times [0,1])$. (We remark that it is also possible to prove this result directly using the theory of end-periodic diffeomorphisms of end-periodic surfaces.)  

Next we show that $\xi$ is $[0,1]$-invariant. We decompose $S\times [0,1]$ along a collection of compression disks of the form $a_1 \times [0,1], \dots,a_k\times [0,1]$, where $\{a_1,\dots,a_k\}$ is a basis of arcs for $S$.  Each circle $\partial (a_i \times [0,1])$ intersects the dividing set $\partial S\times \{\tfrac{1}{2}\}$ in exactly two points, i.e., $(S\times[0,1],\partial S\times[0,1])$ is product disk decomposable.  Hence, by the usual convex surface theory, there is a unique tight contact structure on $(S\times[0,1],\partial S\times[0,1])$, and it is $[0,1]$-invariant.

It remains to prove that the Reeb vector field $R_\lambda$ itself flows from $S\times\{0\}$ to $S\times \{1\}$ when $S$ is planar. We use the well-known technique of foliating $\R\times S\times[0,1]$ by holomorphic curves, due to Eliashberg-Hofer \cite{EH} when $S$ is a disk, and to Wendl \cite{We} when $S$ is a more general planar surface. For that, we embed our product as a part of an open book decomposition. We have a page $S_0$ transverse to the Reeb vector field to start the foliation by holomorphic curves asymptotic to the binding and, even if the contact form is not adapted, there is no possibility of breaking since all orbits intersect the pages positively. Since the Reeb flow is transverse to the foliation, there must be a first return map on $S_0$ and the conclusion follows.
\end{proof}

\begin{q} 
	Can one prove that if there is no orbit in $S\times [0,1]$, then there is also no trapped orbit even when $S$ is not planar?
\end{q}

In the higher-dimensional case, such a normalization theorem does not hold, as shown by Geiges, R\"ottgen and Zehmisch in \cite{GRZ} where they exhibit a situation with trapped orbits without periodic ones in a product sutured contact manifold.

Finally, we relate the Reeb dynamics and the {\em depth} of the sutured manifold, i.e., the minimum number of steps in a sutured hierarchy needed to get to a product sutured manifold. This is also the minimal depth of a supported foliation.

\begin{thm}\label{thm: depth}
If $(M,\Gamma ,\xi =\ker \lambda)$ is a taut balanced irreducible sutured contact manifold of depth greater than $2k$ with $H_2 (M)=0$ and if $R_\lambda$ is nondegenerate and has no elliptic orbit, then it has at least $k+1$ hyperbolic orbits.
\end{thm}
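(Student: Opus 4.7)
The plan is to combine Theorem~\ref{thm: sutured manifold} with an elementary upper bound on $\dim ECH$ coming from the absence of elliptic orbits, and with a Juh\'asz-type lower bound on $\dim SFH$ coming from the depth hypothesis.

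First, I would observe that since $R_\lambda$ is nondegenerate and has no elliptic orbit, every simple Reeb orbit is hyperbolic, and therefore appears in any ECH orbit set with multiplicity at most one. If $n$ denotes the total number of simple Reeb orbits of $R_\lambda$ in $(M,\Gamma)$, then the set of orbit sets (including the empty orbit set) has cardinality at most $2^n$, so
$$\dim_{\F} ECH(M,\Gamma,\xi) \ \leq \ \dim_{\F} ECC(M,\Gamma,\lambda,J) \ \leq \ 2^n.$$
Applying Theorem~\ref{thm: sutured manifold} then yields $\dim_{\F} SFH(-M,-\Gamma) \leq 2^n$.

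Second, I would invoke a theorem of Juh\'asz (from the theory of the sutured Floer polytope) relating the depth of a taut balanced irreducible sutured manifold with $H_2(M)=0$ to the rank of its sutured Floer homology. In the form needed here, this inequality reads
$$\op{depth}(M,\Gamma) \ \leq \ 2 \log_2 \dim_{\F} SFH(M,\Gamma),$$
so the hypothesis $\op{depth}(M,\Gamma) > 2k$ forces $\dim_{\F} SFH(M,\Gamma) > 2^k$. Combining with the first step gives $2^n > 2^k$, hence $n \geq k+1$, which is the desired estimate since all $n$ simple orbits are hyperbolic.

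The main obstacle is the second step: locating and invoking the precise form of the depth-versus-rank estimate for $SFH$. The hypotheses of irreducibility and $H_2(M)=0$ enter precisely there, ensuring that the decomposing surfaces in a sutured hierarchy lie in a controlled range of relative homology classes, so that iterated application of Juh\'asz's surface decomposition formula produces a sequence of $SFH$-summands whose depth is bounded logarithmically by $\dim_{\F} SFH$. Once this input is in place, the rest of the argument, as sketched above, is entirely formal.
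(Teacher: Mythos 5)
Your proposal is correct and follows essentially the same route as the paper: the absence of elliptic orbits bounds the number of ECH generators by $2^n$ where $n$ is the number of (simple, hyperbolic) Reeb orbits, Theorem~\ref{thm: sutured manifold} transfers this to an upper bound on $\op{rk}\,SFH(-M,-\Gamma)$, and the depth hypothesis gives a lower bound via Juh\'asz. The reference you were looking for is \cite[Theorem~4]{Ju2} (the sutured Floer homology polytope paper), which states that a taut balanced sutured manifold with $H_2(M)=0$ and $\op{rk}\,SFH < 2^{k+1}$ has depth at most $2k$; the paper uses the contrapositive $\op{rk}\,SFH(-M,-\Gamma)\geq 2^{k+1}$ directly, which is slightly sharper than your logarithmic reformulation but yields the same conclusion $n\geq k+1$.
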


\begin{proof}
Under the hypothesis of the theorem, Juh\'asz \cite[Theorem~4]{Ju2} shows that 
$$\op{rk} HF(-M,-\Gamma) \geq 2^{k+1}.$$ 
By our isomorphism, the ECH chain complex must have rank $\geq 2^{k+1}$. When there are no elliptic orbits, this implies the existence of at least $k+1$ hyperbolic orbits for $R_\lambda$.
\end{proof}

Notice that every Reeb vector field can be perturbed to possess only hyperbolic orbits up to a certain action threshold $L$ \cite[Theorem~2.5.2]{CGH1}, typically a number going to infinity with $L$ whenever there is an elliptic orbit to start with.

\end{document}